\def\equationautorefname~#1\null{(#1)\null}
\newcommand{\bR}{\overline{\mathrm{R}}} 
\DeclareMathOperator{\diver}{div} 
\DeclareMathOperator{\Ric}{Ric} 
\DeclareMathOperator{\Ind}{Ind} 
\DeclareMathOperator{\Nul}{Nul} 
\DeclareMathOperator{\trace}{tr} 
\DeclareMathOperator{\pIm}{Im} 
\DeclareMathOperator{\Span}{span}  
\newcommand{\bRic}{\overline{\Ric}} 
\newcommand{\prodesc}[2]{g(#1,#2)} 
\newcommand{\bprodesc}[2]{\langle#1,#2\rangle} 
\newcommand{\nablab}{\overline{\nabla}} 
\newcommand{\nablap}{\overline{D}} 
\newcommand{\nablar}{\prescript{g}{}{\overline{\nabla}}} 
\newcommand{\nablae}{\prescript{g}{}{\overline{D}}} 
\newcommand{\nablag}{\prescript{g}{}{\nabla}} 
\newcommand{\Deltab}{\bar{\Delta}} 
\newcommand{\Deltar}{\prescript{g}{}{\bar{\Delta}}} 
\newcommand{\HM}{HM}
\newtheorem{theorem}{Theorem}
\newtheorem{proposition}{Proposition}
\newtheorem{corollary}[theorem]{Corollary}
\newtheorem{lemma}{Lemma}
\theoremstyle{definition}
\newtheorem{example}{Example}
\theoremstyle{remark}
\newtheorem{remark}{Remark}
\numberwithin{equation}{section}
\title{Index of compact minimal submanifolds of the Berger spheres}
\author{Francisco Torralbo}
\address{Departamento de Geometría y Topología, Universidad de Granada, Spain}
\email{ftorralbo@ugr.es}
\author{Francisco Urbano}
\address{Departamento de Geometría y Topología, Universidad de Granada, Spain}
\email{furbano@ugr.es}
\thanks{
  The first author is supported by project \textsc{pid2019.111531ga.i00} funded by \textsc{mcin/ aei/10.13039/501100011033} and the Programa Operativo \textsc{feder} Andalucía 2014-2020, grant no.\ \textsc{e-fqm-309-ugr18}. The second author is supported by Regional J. Andalucía grant no. \textsc{p18-fr-4049}. 
}
\subjclass[2010]{Primary 53C42; Secondary 53C40}
\keywords{minimal submanifolds, compact surfaces, stability, Berger spheres}
\begin{document}

\title{Index of compact minimal submanifolds of the Berger spheres}

\begin{abstract}

The stability and the index of compact minimal submanifolds of the Berger spheres $\mathbb{S}^{2n+1}_{\tau},\, 0<\tau\leq 1$,  are studied. Unlike the case of the standard sphere ($\tau=1$), where there are no stable compact minimal submanifolds, the Berger spheres have stable ones  if and only if $\tau^2\leq 1/2$. Moreover, there are no stable compact minimal $d$-dimensional submanifolds of $\mathbb{S}^{2n+1}_\tau$ when $1 / (d+1) < \tau^2 \leq 1$ and the stable ones are classified for $\tau^2=1 / (d+1)$ when the submanifold is embedded. Finally, the compact orientable minimal surfaces of $\mathbb{S}^3_{\tau}$ with index one are classified for $1/3\leq\tau^2\leq 1$.
\end{abstract}

\maketitle

\subsection*{Acknowledgements}

The first author is supported by project PID2019.111531GA.I00 funded by MCIN/AEI/10.13039/501100011033 and the Programa Operativo FEDER Andalucía 2014-2020, grant no.~E-FQM-309-UGR18. The second author is supported by Regional J. Andalucía grant no. P18-FR-4049. 

The authors would like to thank the referees for thorough revisions of the manuscript and inestimable suggestions to improve it.

Data sharing not applicable to this article as no datasets were generated or analysed during the current study.

\section{Introduction}

The second variation operator of minimal submanifolds of Riemannian manifolds (the Jacobi operator) carries information about stability properties of the submanifold when it is thought of as a critical point of the volume functional. Perhaps the starting point is  the paper of J. Simons~\cite{Simons68}, where he characterized the compact minimal submanifolds of the sphere with the lowest index (number of independent infinitesimal deformations which do decrease the volume), proving that there are no stable ones. Later, H.~B.~Lawson and J.~Simons~\cite{LS73}, and Y.~Onhita~\cite{Onhita86}, studied similar problems when the ambient Riemannian manifold is a compact rank-one symmetric space.

An important particular case in this setting is when the submanifold is a two-sided hypersurface. In this case,  the normal bundle is trivial and of rank one. Then the Jacobi operator, which acts on the sections of the normal bundle, becomes a Schrödinger operator acting on functions. In this case the study of the index of the Jacobi operator has been made for complete (not necessarily compact) minimal hypersurfaces, and the results of D. Fischer-Colbrie and R. Schoen, \cite{Fischer-Colbrie1985, FCS1980}, have been fundamental in the growth of this theory.

During the last forty years many papers have been devoted to study stability and index of minimal submanifolds in different ambient Riemannian manifolds. Among them, we only mention two of the last ones. First, the recent paper of O. Chodosh and D. Maximo~\cite{CM2021}, where they get a lower bound of the index of complete minimal surfaces in $\mathbb{R}^3$ in terms of the genus, the number of ends and the multiplicity of the surfaces. Second, the paper of F. C. Marques and A.\ Neves, \cite{MN2020} and references therein, where  given a compact manifold $M^{n}$, $3\leq n\leq 7$ with a generic metric, they use min-max theory  to construct for each positive integer number $k$  a two-sided embedded minimal hypersurface of $M$ with index $k$.

In the present work, we are interested in the index of compact minimal submanifolds of the Berger spheres. If $\mathbb{S}^{2n+1}$ is the unit sphere of dimension $2n+1$, the Berger spheres are a $1$-parameter family $\{(\mathbb{S}^{2n+1},\langle.\rangle_\tau)\colon 0<\tau\leq 1\}$, where $\langle,\rangle_{1}$ is the standard metric $g$ on  $\mathbb{S}^{2n+1}$ and the metric $\langle,\rangle_{\tau}$ is given in ~\eqref{eq:defini-metrica Berger}. These Berger spheres are often called elliptic Berger spheres. This deformation of the 
standard metric $g$ is a classical example of a collapse~\cite{CG1990}, in which the injectivity radius $i_{\tau}$ of $\langle,\rangle_{\tau}$ is $\tau\pi$ and has uniformly bounded positive curvature.  This deformation of the standard metric $g$ can be defined even for $\tau\in (1,\infty)$, but the geometry of these Berger spheres, called hyperbolic, is quite different from the elliptic ones. In the paper, although some minor results are also true for $\tau\in (1,\infty)$, we only consider Berger spheres with $\tau\in (0,1]$.

The standard sphere $\mathbb{S}^{2n+1}$ is a geodesic sphere of $\mathbb{C}^{n+1}$ of radius $1$. In a similar way,  the Berger sphere $\mathbb{S}^{2n+1}_{\tau}$, $\tau\in (0,1)$,  is isometric to a
geodesic sphere of the complex projective space $\mathbb{CP}^{n+1}(4(1-\tau^2))$ of radius $\frac{\arccos(\tau)}{\sqrt{1-\tau^2}}$ and the restrictions to $\mathbb{S}^{2n+1}_{\tau}$ of the complex structures of $\mathbb{C}^{n+1}$ and $\mathbb{CP}^{n+1}(4(1-\tau^2))$ are the same (see \autoref{prop:geodesic-spheres-complex-spaces}). Along the paper, any minimal submanifold of $\mathbb{S}^{2n+1}_{\tau}$ will be considered as a submanifold of $\mathbb{CP}^{n+1}(4(1-\tau^2))$ and its behaviour with respect to the complex structure will play an important role in the proofs of the results.

On the other hand, the Hopf fibration $\pi:\mathbb{S}^{2n+1}\rightarrow\mathbb{CP}^n(4)$ defines a Riemannian submersion $\pi:\mathbb{S}^{2n+1}_{\tau}\rightarrow\mathbb{CP}^n(4)$ for any $\tau\in (0,1]$. The induced $\mathbb{S}^1$-bundles by the Hopf fibration over  compact complex submanifolds of $\mathbb{CP}^n(4)$ (see \autoref{ex:induced-bundle}) define  examples of compact minimal submanifolds of $\mathbb{S}^{2n+1}_{\tau}$ which have a very good behaviour with respect to the second variation of the volume. These kind of minimal submanifolds will provide the more regular examples, some of which will be characterized by their index.

In \autoref{sec:minimal-submanifolds-Berger-spheres}, we introduce some important examples of compact minimal submanifolds of $\mathbb{S}^{2n+1}_{\tau}$, classifying  in \autoref{prop:classification-totally-geodesic} the totally geodesic ones.  We compute the index and  nullity of the above examples in Propositions~\ref{prop:index-nullity-totally-geodesic-Berger-spheres}, \ref{prop:index-nullity-Veronese}, \ref{prop:index-nullity-totally-geodesic-spheres} and~\ref{prop:Clifford-hypersurfaces-index-nullity}, and, among them, stable examples appear.  The computation of the index and the nullity of these examples is not easy and, to do that, it has been crucial the paper of S. Tanno, \cite{Tanno1979}, in which he gives important insight into the spectrum of the Laplacian of $\mathbb{S}^{2n+1}_{\tau}$.

 The main results about stability of  compact minimal submanifolds of $\mathbb{S}^{2n+1}_{\tau}$ are obtained in  Theorems~\ref{thm:S1-bundle-are-stable} and~\ref{thm:classification-stable} and they can be  summarized  as follows:
\begin{quote}
{\it There are no stable immersed compact minimal $d$-dimensional submanifolds  of $\mathbb{S}^{2n+1}_{\tau}$ when $\frac{1}{d+1}<\tau^2\leq 1$.}
\end{quote}
\begin{quote}
{\it If $\tau^2=\frac{1}{d+1}$ for some positive integer $d$, then an embedded compact minimal submanifold $M^d$ of $\mathbb{S}^{2n+1}_{\tau}$ is  stable if and only if $d=2m+1$ and  $M^{2m+1}$  is the induced $\mathbb{S}^1$-bundle by the Hopf fibration over  an  embedded compact complex submanifold $N^{2m}$ of $\mathbb{CP}^n(4)$.}
\end{quote}

\begin{quote}
{\it If $0<\tau^2\leq \frac{1}{2m+2}$ for some integer $m \geq 0$, then any immersed submanifold $M^{2m+1}$ which is the induced $\mathbb{S}^1$-bundle by the Hopf fibration $\pi: \mathbb{S}^{2n+1}_\tau \to \mathbb{CP}^n(4)$, over a compact complex submanifold $N^{2m}$ of $\mathbb{CP}^n(4)$ is stable. }
\end{quote}

In the proof of the two first above  results, we use test normal sections on the second variation coming from parallel vector fields of the complex  Euclidean space $\mathbb{C}^{n+1}$ and  from holomorphic vector fields of the complex projective space $\mathbb{CP}^{n+1}(4(1-\tau^2))$.

When $M^{2n}$ is a  compact orientable minimal hypersurface of $\mathbb{S}^{2n+1}_{\tau}$, then $M$ is unstable (\autoref{prop:hypersurface-unstable}). Hence, in this case, is natural to study the hypersurfaces of index $1$. From \autoref{prop:Clifford-hypersurfaces-index-nullity}, we know that the Clifford hypersurfaces have index $1$ when $0<\tau^2\leq \frac{1}{2n+1}$. In \autoref{thm:characterization-index-surfaces}, we give an answer to this problem when $M$ is a surface, proving:
\begin{quote}
{\it If $\frac{1}{3}\leq\tau^2\leq 1$, then a compact orientable minimal surface of $\mathbb{S}^3_{\tau}$ has index one if and only if $M$ is either the unique minimal sphere of $\mathbb{S}^3_{\tau}$ or the  Clifford surface in $\mathbb{S}^3_{1/\sqrt{3}}$.}
\end{quote}

For the proof of this result, we embed isometrically $\mathbb{CP}^2(4(1-\tau^2))$ in the Euclidean space (see appendix),  so we can see our surface isometrically immersed in a  sphere $\mathbb{S}^{7}(r)$, for certain radius $r$. In this case, we use as test functions on the second variation the restriction to the surface of the components of certain conformal transformations of $\mathbb{S}^7(r)$.

\section{The Berger spheres}

The Berger spheres $\mathbb{S}^{2n+1}_\tau,\, 0<\tau\leq 1,$  are the Riemannian  manifolds $(\mathbb{S}^{2n+1}, \langle\cdot,\cdot\rangle_{\tau})$ where $\langle\cdot,\cdot\rangle_{\tau}$ is the Riemannian metric on the unit sphere $\mathbb{S}^{2n+1} = \{z \in \mathbb{C}^{n+1}\colon \lvert z\rvert^2 = 1\}$ given by 
\begin{equation}\label{eq:defini-metrica Berger}
\langle v, w\rangle_\tau = \prodesc{v}{w} - (1-\tau^2)\prodesc{v}{iz} \prodesc{w}{iz}, \quad v, w \in T_z \mathbb{S}^{2n-1},
\end{equation} 
where $g$ is the Euclidean metric in $\mathbb{C}^{n+1}$ and $i$ is the imaginary unit. Notice that $\langle{\cdot},{\cdot}\rangle_1 = g$. In the sequel we will denote the Berger metric simply by $\bprodesc{\cdot}{\cdot}$ omitting the subindex $\tau$. 

The isometry group of $\mathbb{S}^{2n+1}_\tau$, $0 < \tau < 1$, is the subgroup of the orthogonal group $O(2n+2)$ given by $\{A \in O(2n+2)\colon A\mathcal{J} = \pm \mathcal{J}A\}$ where $\mathcal{J} \in O(2n+2)$ is the matrix associated with the multiplication by $i$ in $\mathbb{C}^{n+1}$. The subgroup $\{A \in O(2n+2)\colon A\mathcal{J} = \mathcal{J}A\}$ is the real representation of the unitary group $U(n+1)$ and hence the dimension of the isometry group of $\mathbb{S}^{2n+1}_\tau$ is $(n+1)^2$.  In this context, it is clear that the group $U(n+1)$ acts transitively  on $\mathbb{S}^{2n+1}_\tau$ and the isotropy subgroup at a point of $\mathbb{S}^{2n+1}_\tau$ is isomorphic to $U(n)$. Hence the Berger sphere is a homogeneous Riemannian manifold diffeomorphic to $U(n+1) / U(n)$.

On the other hand, the Hopf fibration $\pi: \mathbb{S}^{2n+1} \to \mathbb{C}\mathbb{P}^n(4)$, where $\mathbb{C}\mathbb{P}^n(4)$ denotes the complex projective space endowed with the Fubini-Study metric of constant holomorphic sectional curvature $4$, is a Riemannian submersion from $\mathbb{S}^{2n+1}_{\tau}$ onto $\mathbb{CP}^n(4)$,  whose fibers are circles of length $2\pi \tau$. The unit vector field $\xi$ on $\mathbb{S}^{2n+1}_{\tau}$, defined by  
\begin{equation}\label{eq:killing}
\xi_z = \tfrac{1}{\tau}iz,\quad\forall z\in\mathbb{S}^{2n+1}_{\tau},
\end{equation}
spans the vertical line of the Hopf fibration and its uniparametric group is given by
\begin{equation}\label{eq:uniparametric}
\zeta(t,z)=\cos(\tfrac{t}{\tau})z+\sin (\tfrac{t}{\tau})iz,\quad t\in\mathbb{R}, \, z\in\mathbb{S}^{2n+1}_{\tau}.
\end{equation}  
Hence the action of $\mathbb{S}^1$ over $\mathbb{S}^{2n+1}$  can be written as $e^{it} \cdot z = \zeta(\tau t,z)$, $0\leq t\leq 2\pi$. Moreover, for any $z\in\mathbb{S}^{2n+1}_{\tau}$, the tangent space $T_z \mathbb{S}^{2n+1}_\tau$ can be orthogonally decomposed as $\langle \xi_z \rangle \oplus \mathcal{H}_z$, where $\mathrm{d}\pi_z:\mathcal{H}_z\rightarrow T_{\pi(z)}\mathbb{CP}^n(4)$ is a linear isometry. We note that the metrics $\bprodesc{\cdot}{\cdot}$ and $g$ coincide on $\mathcal{H}$, and $iv \in \mathcal{H}_z$ for any $v \in \mathcal{H}_z$.

Along the paper we will see the Berger spheres $\mathbb{S}^{2n+1}_{\tau}$ as geodesic spheres of the complex projective space $\mathbb{CP}^{n+1}(4(1-\tau^2))$. The following result describes it explicitly.

\begin{proposition}\label{prop:geodesic-spheres-complex-spaces}
  The map $F: \mathbb{S}^{2n+1}_\tau \to \mathbb{CP}^{n+1}(4(1-\tau^2))$ given by 
\[
F(z_1,\ldots, z_{n+1}) = \left[ \big(\tfrac{\tau}{\sqrt{1-\tau^2}}, z_1,\ldots, z_{n+1}\big)\right],
\] 
where $[w]=\pi(w)$ for any $w\in\mathbb{S}^{2n+3}(\frac{1}{\sqrt{1-\tau^2}})$ and $\pi:\mathbb{S}^{2n+3}(\frac{1}{\sqrt{1-\tau^2}})\rightarrow\mathbb{CP}^{n+1}(4(1-\tau^2)$ is the canonical projection,  
is an isometric embedding of the Berger sphere  $\mathbb{S}^{2n+1}_{\tau}$ into the complex projective space of constant holomorphic sectional curvature $4(1-\tau^2)$. Hence, the Berger sphere $\mathbb{S}^{2n+1}_\tau$ can be identified with the set%
\[
  \mathbb{S}^{2n+1}_\tau \equiv \left\{[(w_0,w_1,\ldots,w_{n+1})] \in \mathbb{CP}^{n+1}(4(1-\tau^2)) \colon \lvert w_0\rvert^2 = \tfrac{\tau^2}{1-\tau^2}\right\},
\] 
which is a geodesic sphere of $\mathbb{CP}^{n+1}(4(1-\tau^2))$ with radius $\frac{1}{\sqrt{1-\tau^2}} \arccos(\tau)$ and center $[(\frac{1}{\sqrt{1-\tau^2}},0,\ldots,0)]$. The Fubini-Study metric of $\mathbb{CP}^{n+1}(4(1-\tau^2))$ will be also denoted by $\bprodesc{\cdot}{\cdot}$.

Moreover, if $J$ denotes the complex structure of $\mathbb{CP}^{n+1}(4(1-\tau^2))$, then  $ JdF_z(
\xi_z)$ is a unit normal vector  to $\mathbb{S}^{2n+1}_{\tau}$ in $\mathbb{CP}^{n+1}(4(1-\tau^2))$, and $dF_z(iu) = JdF_z(u)$ for any horizontal vector $u \in T_z\mathbb{S}^{2n+1}_{\tau}$. 

Finally, the second fundamental form of the embedding $F$ is given by
  \begin{equation} \label{eq:2ff-Berger-complex-space}
    \hat{\sigma}(X, Y) = \left(\tau \bprodesc{X}{Y}  - \tfrac{1-\tau^2}{\tau}\bprodesc{X}{\xi}\bprodesc{Y}{\xi}\right) J \xi,\quad\forall X,Y\in\mathfrak{X}(\mathbb{S}^{2n+1}_{\tau}). \\
  \end{equation}
\end{proposition}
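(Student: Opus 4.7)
The plan is to factor $F$ through the sphere by writing $\tilde F(z) = (\tau/\sqrt{1-\tau^2}, z) \in \mathbb{S}^{2n+3}(1/\sqrt{1-\tau^2})$, so that $F = \pi\circ\tilde F$ with $\pi$ the canonical projection onto $\mathbb{CP}^{n+1}(4(1-\tau^2))$. Injectivity of $F$ is immediate since the first coordinate of $\tilde F(z)$ is a fixed real positive number, independent of $z$, so classes coincide iff $z=z'$. For the isometric character, I would compute $d\tilde F_z(v) = (0,v)$ and take its horizontal projection for $\pi$: at $w = \tilde F(z)$ the vertical direction is spanned by $iw$, the pairing $g((0,v), iw) = g(v, iz)$ gives the vertical coefficient, and the horizontal projection has squared norm $|v|^2 - (1-\tau^2)g(v, iz)^2 = \langle v, v\rangle_\tau$ by \eqref{eq:defini-metrica Berger}. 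That the image equals the displayed subset of $\mathbb{CP}^{n+1}(4(1-\tau^2))$ is built into the formula for $F$.

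To identify this subset as a geodesic sphere, I would normalize the ambient point to unit norm, obtaining $\tilde w = (\tau, \sqrt{1-\tau^2}\, z)$, whose Hermitian pairing with $(1,0,\ldots,0)$ equals $\tau$ for every $z$. The standard distance formula in $\mathbb{CP}^{n+1}(4(1-\tau^2))$ then yields $d([(1,0,\ldots,0)], [w]) = \arccos(\tau)/\sqrt{1-\tau^2}$, independent of $z$, and a dimension-and-compactness argument forces the image of $F$ to coincide with the whole geodesic sphere. For the unit normal I would compute the horizontal lift of the radial geodesic from the center: it has the form $\cos(s/r)w_0 + r\sin(s/r)(0,z)$ with $r = 1/\sqrt{1-\tau^2}$ and $w_0 = (1/\sqrt{1-\tau^2},0,\ldots,0)$, so its tangent at $s = \arccos(\tau)/\sqrt{1-\tau^2}$ is the horizontal unit vector $(-\sqrt{1-\tau^2}, \tau z)$ at $w$. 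On the other hand, horizontally projecting $d\tilde F_z(\xi_z) = (0, iz/\tau)$ gives $(-i\sqrt{1-\tau^2}, \tau iz)$; multiplying by $i$ (which realizes $J$ on horizontal lifts) yields $(\sqrt{1-\tau^2}, -\tau z)$, the radial tangent up to sign, so $JdF_z(\xi_z)$ is a unit normal. The relation $dF_z(iu) = JdF_z(u)$ for horizontal $u$ follows at once because $g(iu, iz) = g(u, z) = 0$ makes $(0,u)$ and $(0,iu)$ already horizontal for $\pi$.

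For the second fundamental form I would invoke the classical shape operator of a geodesic sphere of radius $r$ in a complex space form $\mathbb{CP}^{n+1}(4c)$, which is diagonal in the orthogonal splitting $\langle JN\rangle \oplus (\langle N, JN\rangle)^\perp$ with principal curvatures $2\sqrt{c}\cot(2\sqrt{c}\, r)$ and $\sqrt{c}\cot(\sqrt{c}\, r)$ respectively. Plugging $c = 1-\tau^2$ and $r = \arccos(\tau)/\sqrt{1-\tau^2}$ produces $(2\tau^2-1)/\tau$ on the $\xi$-direction and $\tau$ on the horizontal complement; reassembling these into $\hat\sigma(X,Y) = \langle A_N X, Y\rangle N$ with $N = J\xi$ (the inward choice fixed by the sign computation above) reproduces the stated formula. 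The main obstacle is pinning down the principal curvatures with the correct signs relative to the chosen normal; a self-contained alternative is to apply the Gauss formula to $\tilde F\colon \mathbb{S}^{2n+1}_\tau \to \mathbb{S}^{2n+3}(1/\sqrt{1-\tau^2})$ and then descend through the Riemannian submersion via the O'Neill formulas, which is longer but avoids quoting external shape-operator tables.
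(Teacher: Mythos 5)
Your proposal is correct and follows essentially the same route as the paper: factoring $F$ through the Hopf fibration of $\mathbb{S}^{2n+3}(1/\sqrt{1-\tau^2})$, computing horizontal projections to verify the isometry and to identify $J\mathrm{d}F_z(\xi_z)=\mathrm{d}\pi(\sqrt{1-\tau^2},-\tau z)$ as the unit normal, and reading off the second fundamental form from the two constant principal curvatures of a geodesic sphere in $\mathbb{CP}^{n+1}(4(1-\tau^2))$ (your cotangent formulas evaluate to exactly the values $\tau$ and $(2\tau^2-1)/\tau$ that the paper quotes from Takagi, with $\xi$ as the exceptional eigendirection). The only differences are expository: you make the distance-to-center and radial-geodesic computations explicit where the paper asserts them, which if anything strengthens the argument.
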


\begin{proof}
Firstly, we can write $F = \pi \circ \hat{F}$, where $\hat{F}:\mathbb{S}^{2n+1}_{\tau}\rightarrow\mathbb{S}^{2n+3}(\frac{1}{\sqrt{1-\tau^2}})$ is given by $\hat{F}(z)=\Bigl(\frac{\tau}{\sqrt{(1-\tau^2)}},z\Bigr)$, for any $z\in\mathbb{S}^{2n+1}_{\tau}$.

 Then, for any $u\in T_z\mathbb{S}^{2n+1}_{\tau}$ we have $\mathrm{d}\hat{F}_z(u)=(0,u)$ and hence its horizontal component with respect to $\pi$ is $(0,u)^H=(0,u)-(1-\tau^2)g(u,iz)i\hat{F}(z)$. From here, and taking into account \eqref{eq:defini-metrica Berger}, it follows that
\begin{equation}\label{eq:dF-isometry-Berger-complex-projective-space}
  \langle\mathrm{d}F_z(u),\mathrm{d}F_z(v)\rangle  = g(\mathrm{d}\hat{F}_z(u)^H, \mathrm{d}\hat{F}_z(v)^H ) = \langle u,v\rangle,\quad \forall u,v\in T_z\mathbb{S}^{2n+1}_{\tau}, 
\end{equation} 
which means that $F$ is an isometric immersion. As clearly $F$ is injective, we obtain that $F$ is an isometric embedding.

 Also, if $u \in  T_z \mathbb{S}^{2n+1}_\tau$ is a horizontal vector, that is, $\prodesc{u}{iz} = 0$, we have, from the definition of the complex structure $J$ of $\mathbb{CP}^{n+1}(4(1-\tau^2))$, that
 \[
 J \mathrm{d}F_z(u)=\mathrm{d}\pi_{\hat{F}(z)}(i(0,u)^H)=\mathrm{d}\pi_{\hat{F}(z)}((0,iu))=\mathrm{d}F_z(iu).
 \]
Moreover, using~\eqref{eq:killing} we obtain that 
\begin{equation}\label{eq:dF-xi-isometry-Berger-complex-projective-space}
  J\mathrm{d}F_z (\xi_z) = \mathrm{d}\pi_{\hat{F}(z)}(i(0,\xi_z)^H) = \mathrm{d}\pi_{\hat{F}(z)}(\sqrt{1-\tau^2},-\tau z).
\end{equation} 
So, for every $u\in T_z \mathbb{S}^{2n+1}_\tau$, we have  \[
  \bprodesc{\mathrm{d}F_z(u)}{J \mathrm{d}F_z(\xi_z)}  =\prodesc{\mathrm{d}\hat{F}_z(u)^H}{(\sqrt{1-\tau^2},-\tau z)}= 0,
\]
 and hence  $z\in\mathbb{S}^{2n+1}_{\tau}\mapsto J \mathrm{d}F_z(\xi_z)$ defines a unit normal vector field to $F$.

 Finally, it is clear that $F(\mathbb{S}^{2n+1}_{\tau})$ is a geodesic sphere of $\mathbb{CP}^{n+1}(4(1-\tau^2))$ of radius $\frac{1}{\sqrt{1-\tau^2}} \arccos(\tau)$ and hence it is a pseudoumbilical hypersurface with two constant principal curvatures: $\tau$ with multiplicity $2n$ and $\frac{2\tau^2-1}{\tau}$ with multiplicity $1$, see~\cite{Takagi75}. As $\xi\equiv F_*\xi$ is an eigenvector for the eigenvalue $\frac{2\tau^2-1}{\tau}$,  the second fundamental form $\hat{\sigma}$ of the embedding $F$ is given by ~\eqref{eq:2ff-Berger-complex-space}.
\end{proof}

 Taking into account \autoref{prop:geodesic-spheres-complex-spaces} and the expression for the curvature tensor in the complex projective space, we get that the curvature tensor $\bR$  of $\mathbb{S}^{2n+1}_\tau$ is given by
  \begin{multline}\label{eq:Riemann-tensor-Berger}
    \bR(X, Y, Z, W) = \bprodesc{Y}{Z} \bprodesc{X}{W} - \bprodesc{X}{Z} \bprodesc{Y}{W} \\
        + (1-\tau^2)\bigl[\bprodesc{J Y}{Z} \bprodesc{JX}{W} - \bprodesc{J X}{Z} \bprodesc{JY}{W} - 2\bprodesc{J X}{Y} \bprodesc{JZ}{W}\bigr] \\
        + (1-\tau^2) \bprodesc{Z}{\xi}\bigl[ \bprodesc{X}{\xi} \bprodesc{Y}{W} - \bprodesc{Y}{\xi} \bprodesc{X}{W}\bigr] \\
        + (1-\tau^2) \bprodesc{W}{\xi} \bigl[ \bprodesc{Y}{\xi}\bprodesc{X}{Z} - \bprodesc{X}{\xi}\bprodesc{Y}{Z}\bigr],
  \end{multline}
for every $X,Y,Z,W\in\mathfrak{X}(\mathbb{S}^{2n+1}_{\tau})$.

In particular, the sectional curvature of the plane $\Pi\subset T_p\mathbb{S}^{2n+1}_{\tau}$ generated by an orthonormal frame $\{v, w\}$ is given by
\begin{equation}\label{eq:sectional-curvature}
    K(\Pi) = 1 + (1-\tau^2)\bigl[3 \bprodesc{v}{Jw}^2 - \lvert\xi^\Pi\rvert^2\bigr],
\end{equation}
where $()^\Pi$ denotes the tangent component to $\Pi$. Moreover, the Ricci curvature $\bRic$ of a unit vector $v$ is
\begin{equation}\label{eq:ricci}
    \bRic(v) = 2n + 2(1-\tau^2) [1 - (n+1) \bprodesc{v}{\xi}^2], \\
\end{equation}
and the scalar curvature is $\overline\rho=2n[2(n+1)-\tau^2]$.
It is interesting to note that  $\bRic \geq 2n\tau^2> 0$ . 

In the next result we relate some geometric objects of the metrics $\bprodesc{\cdot}{\cdot}$ and $\prodesc{\cdot}{\cdot}$ on the sphere $\mathbb{S}^{2n+1}$.

\begin{lemma}\label{lm:connection-Berger}
  Let $\nablab$ and $\nablar$ be the Levi-Civita connections of the Berger metric $\bprodesc{\cdot}{\cdot}$ and the standard metric $\prodesc{\cdot}{\cdot}$ on $\mathbb{S}^{2n+1}$ respectively. Then:
	\begin{enumerate}
    \item For any $X, Y \in \mathfrak{X}(\mathbb{S}^{2n+1})$ 
	\begin{equation}\label{eq:Levi-Civita-connection-Berger-round}
	\nablar_X Y=	\nablab_X Y + \tfrac{1 - \tau^2}{\tau}\bigl[ \bprodesc{Y}{\xi} J(X - \bprodesc{X}{\xi}\xi) + \bprodesc{X}{\xi} J(Y - \bprodesc{Y}{\xi}\xi) \bigr],
	\end{equation}
  where $\xi$ is the vector field defined in \eqref{eq:killing}.
\item For any $X\in \mathfrak{X}(\mathbb{S}^{2n+1}_\tau)$,
 \begin{equation}\label{eq:covariant-derivative-xi}
  \nablab_X \xi = \tau J(X - \bprodesc{X}{\xi}\xi),
\end{equation}
So  $\xi$ is a unit geodesic Killing field on $\mathbb{S}^{2n+1}_{\tau}$.
\item The gradients and the Laplacians of a function $f:\mathbb{S}^{2n+1}\rightarrow \mathbb{R}$ with respect to both metrics are related by
\begin{equation}\label{eq:laplacian}
	\begin{aligned}
		\nablar f&=\nablab f-(1-\tau^2)(L_{\xi}f)\xi,\\
		\Deltar f &= \Deltab f-(1-\tau^2)(L_{\xi})^2 f, 
	\end{aligned}
\end{equation}
where $L_{\xi}$ is the Lie derivative with respect to the vector field $\xi$.
\item The spectrum of the Laplacian operator $\Deltab$ of $\mathbb{S}^{2n+1}_{\tau}$ is contained in the set
\[
  \left\{\mu_{k,p}=\lambda_k+\tfrac{1-\tau^2}{\tau^2}(k-2p)^2\colon k\in\mathbb{Z}^+,\,p\in\mathbb{Z},\, 0\leq p\leq\lfloor\tfrac{k}{2}\rfloor\right\},
\]
where $\{\lambda_k=k(2n+k),\, k\in\mathbb{Z}^+\}$ is the spectrum of $(\mathbb{S}^{2n+1},g)$ and $\lfloor\cdot\rfloor$ stands for integer part. Moreover, $\Deltar\circ L_{\xi}=L_{\xi}\circ \Deltar$ and the eigenspace $V(\lambda_k)$ of the eigenvalue $\lambda_k$ decomposes as
\[
  V(\lambda_k)=V(\mu_{k,0})\oplus \dots\oplus V(\mu_{k,[\frac{k}{2}]}), 
 \]
where some of $V(\mu_{k,p})$ may be trivial. Moreover, for each $\varphi \in V(\mu_{k,p})$ we have $(L_\xi)^2 \varphi + \frac{1}{\tau^2}(k - 2p)^2 \varphi = 0$.
\end{enumerate}
\end{lemma}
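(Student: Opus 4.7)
The four claims all exploit the structural relation $\bprodesc{X}{Y} = g(X,Y) - (1-\tau^2) g(X, iz) g(Y, iz)$ together with the fact that $iz = \tau\xi$ is Killing for $g$ (the $\mathbb{S}^1$-rotation being an isometry of the round sphere). For (1), I would write Koszul's formula for both connections and subtract: the discrepancy reduces to cyclic derivatives of the correction $(1-\tau^2) g(\cdot, iz) g(\cdot, iz)$, which using $\nablar_X(iz) = iX - g(iX, z) z$ on $\mathbb{S}^{2n+1}$ and the identification $iu = Ju$ on horizontal vectors (\autoref{prop:geodesic-spheres-complex-spaces}) collapse exactly to the bracket in \eqref{eq:Levi-Civita-connection-Berger-round}. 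For (2), specialise (1) to $Y = \xi$: the second summand vanishes because $\bprodesc{\xi}{\xi}=1$, leaving $\nablar_X \xi - \nablab_X \xi = \tfrac{1-\tau^2}{\tau} J(X - \bprodesc{X}{\xi}\xi)$. A direct computation from $\xi = iz/\tau$ gives $\nablar_X \xi = \tfrac{1}{\tau} J(X - \bprodesc{X}{\xi}\xi)$, and subtracting yields \eqref{eq:covariant-derivative-xi}; from this $\nablab_\xi\xi=0$ is immediate, while the Killing property follows from $\bprodesc{JU}{V} + \bprodesc{U}{JV} = 0$ on the horizontal distribution.

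For (3), decompose $\nablar f = \alpha\xi + W_1$ and $\nablab f = \beta\xi + W_2$ with $W_i$ horizontal. Since $g$ and $\bprodesc{\cdot}{\cdot}$ coincide on $\mathcal{H}$, testing $X(f)$ against horizontal $X$ forces $W_1 = W_2$; testing against $X=\xi$, using $\bprodesc{\xi}{\xi}=1$ and $g(\xi,\xi)=1/\tau^2$, gives $\beta = L_\xi f$ and $\alpha = \tau^2 L_\xi f$, whence the gradient identity. Taking divergence of both sides and using that $d\,\mathrm{vol}_{\bprodesc{\cdot}{\cdot}} = \tau\, d\,\mathrm{vol}_g$ is a constant multiple of the round volume form (so $\mathrm{div}_{\bprodesc{\cdot}{\cdot}} = \mathrm{div}_g$ on vector fields), together with $\mathrm{div}_g \xi = 0$, yields the Laplacian identity.

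Statement (4) carries the main content. Since $\xi$ is $g$-Killing, $L_\xi$ commutes with $\Deltar$; each round eigenspace $V(\lambda_k)$ is therefore $L_\xi$-invariant, and because $L_\xi$ is $L^2$-skew-adjoint (vanishing $g$-divergence), $V(\lambda_k)$ splits orthogonally into eigenspaces of the self-adjoint operator $(L_\xi)^2$. To identify those eigenvalues I would pass to complex spherical harmonics: $V(\lambda_k) \otimes \mathbb{C}$ consists of restrictions to $\mathbb{S}^{2n+1}$ of $U(n+1)$-harmonic polynomials of degree $k$, which decompose by bidegree $(a,b)$ with $a+b=k$. The flow \eqref{eq:uniparametric} acts on a bidegree $(a,b)$ polynomial by $e^{i(a-b)t/\tau}$, so $L_\xi$ is multiplication by $i(a-b)/\tau$ and $(L_\xi)^2$ by $-(a-b)^2/\tau^2$; writing $p = \min(a,b)$, these give the $\lfloor k/2\rfloor + 1$ values $-(k-2p)^2/\tau^2$, and the corresponding real eigenspaces $V(\mu_{k,p})$ exhaust $V(\lambda_k)$. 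Rewriting (3) as $\Deltab = \Deltar + (1-\tau^2)(L_\xi)^2$ shows that $\Deltab$ and $\Deltar$ commute and simultaneously diagonalise on each $V(\mu_{k,p})$, with $\Deltab$-eigenvalue $\mu_{k,p} = \lambda_k + \tfrac{1-\tau^2}{\tau^2}(k-2p)^2$; completeness of $\{V(\lambda_k)\}$ in $L^2(\mathbb{S}^{2n+1})$ then delivers the claimed inclusion of the spectrum. The main technical hurdle is the $U(n+1)$-bidegree decomposition of harmonic polynomials and the precise identification of the $L_\xi$-action on it, which is classical representation theory made fully explicit in \cite{Tanno1979}.
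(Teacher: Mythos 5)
Your proposal is correct and follows essentially the same route as the paper: Koszul's formula for (1), specialising (1) to $Y=\xi$ together with the identity $\nablar_X(iz)=(iX)^{\top}$ for (2), comparison of $X(f)=\bprodesc{\nablab f}{X}=\prodesc{\nablar f}{X}$ for the gradient identity in (3), and Tanno's harmonic-polynomial bidegree decomposition for (4), which the paper simply cites from \cite{Tanno1979} rather than sketching. The only minor deviations are that you derive the Laplacian relation via the constant ratio $\mathrm{d}v=\tau\,\mathrm{d}v_g$ of the volume forms and $\diver_g\xi=0$, where the paper traces the connection relation \eqref{eq:Levi-Civita-connection-Berger-round} directly; both are routine and valid.
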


\begin{remark}\label{rmk:Eigenspaces-Tanno}
  For $n = 0$, the metrics $\langle,\rangle_{\tau}$ and $g$ are homothetic on $\mathbb{S}^1$. Hence,  we have that $V(\lambda_k) = V(\mu_{k,0})$ and so $V(\mu_{k,p})$ are trivial for all $p \neq 0$.  For $n \geq 1$, Tanno~\cite{Tanno1968} shows that $V(\mu_{k,0})$, $V(\mu_{k,k/2})$ and $V(\mu_{k,(k-2)/2})$ ($k$ even), and $V(\mu_{k, (k-1)/2})$ ($k$ odd) are always non-trivial.
\end{remark}

\begin{proof}
Using the expression of the Levi-Civita connection given in the Koszul formula and the relation between the metrics $\bprodesc{\cdot}{\cdot}$ and $\prodesc{\cdot}{\cdot}$ given in ~\eqref{eq:defini-metrica Berger}, it is straightforward  to get \eqref{eq:Levi-Civita-connection-Berger-round}. Now using that $\nablar_Xiz$ is the tangential component to the sphere of $iX$ and \eqref{eq:Levi-Civita-connection-Berger-round} we easily get \eqref{eq:covariant-derivative-xi}.

If $f$ is a smooth function on $\mathbb{S}^{2n+1}$ then  $X(f)=\bprodesc{\nablab f}{X}=\prodesc{\nablar f}{X}$. Now, using \eqref{eq:defini-metrica Berger} we get the relations between the gradients given in \eqref{eq:laplacian}. Under this condition, the relations between the Laplacians given in \eqref{eq:laplacian} is a direct consequence of \eqref{eq:Levi-Civita-connection-Berger-round}. Finally, item (4) was proved by S.\ Tanno~\cite{Tanno1979}.  
\end{proof}

\section{Minimal submanifolds in the Berger spheres}\label{sec:minimal-submanifolds-Berger-spheres}

Let $\Phi:M^d\rightarrow\mathbb{S}^{2n+1}_{\tau}$ be an  immersion of a compact $d$-dimensional manifold $M$ in the Berger sphere $\mathbb{S}^{2n+1}_\tau$. Then, $\Phi^*T\mathbb{S}^{2n+1}_{\tau}=TM\oplus T^{\perp}M$, where $T^{\perp}M$ is the normal bundle of $\Phi$. Let $\nablab$ be the Levi-Civita connection of $\mathbb{S}^{2n+1}_\tau$, $\nabla$ be the Levi-Civita connection of the induced metric on $M$, and $\nabla^\perp$ be the normal connection.

Suppose now that $\Phi$ is a minimal immersion, that is $\Phi$ is a critical point for the volume functional. The second variation operator, known as the {\it Jacobi operator} of $\Phi$ and which will be denote by $\mathcal{L}$, is a strongly elliptic operator acting on the sections of the normal bundle, $\mathcal{L}:\mathfrak{X}^{\perp}(M)\rightarrow\mathfrak{X}^{\perp}(M)$ given by
\[ 
\mathcal{L}  = \Delta^\perp  + \mathcal{A} + \mathcal{R},
\]
where $\Delta^{\perp}$ is the normal Laplacian
\[
\Delta^\perp  = \sum_{i = 1}^{d} \bigl( \nabla^\perp_{e_i} \nabla^\perp_{e_i}  - \nabla^\perp_{\nabla_{e_i}e_i}\bigr),
\]
and  $\mathcal{A}, \mathcal{R}$ are the endomorphisms of the normal bundle defined as follows
\[
\mathcal{A}(\eta)=\sum_{i=1}^d\sigma(e_i,A_{\eta}e_i),\quad \mathcal{R}(\eta)=\sum_{i=1}^d(\bR(\eta,e_i)e_i)^{\perp},
\]
where $\{e_1,\dots,e_d\}$ is an orthonormal reference tangent to $M$, $\sigma$ is the second fundamental form of $\Phi$, $A_{\eta}$ is the Weingarten endomorphism associated to $\eta\in \mathfrak{X}^{\perp}(M)$ and $\perp$ stands for normal component. Let $\mathcal{Q}:\mathfrak{X}^{\perp}(M)\rightarrow\mathbb{R}$ be the quadratic form associated to the Jacobi operator $\mathcal{L}$, defined by
\[
  \mathcal{Q}(\eta)=-\int_M \bprodesc{\mathcal{L} \eta}{\eta}\, \mathrm{d}v.
\]
 We will denote by $\Ind(\Phi)$ and $\Nul(\Phi)$ (they will be also denoted by $\Ind(M)$ and $\Nul(M)$) the {\it index} and the {\it nullity} of the quadratic form $\mathcal{Q}$, which are respectively the number of negative eigenvalues of $\mathcal{L}$ and the multiplicity of zero as an eigenvalue of $\mathcal{L}$. The immersion $\Phi$ will be called {\it stable} if $\Ind(\Phi)=0$.

Using~\eqref{eq:Riemann-tensor-Berger} we obtain that
\begin{equation}\label{eq:Jacobi-operator}
  \mathcal{L} \eta = \Delta^\perp \eta + \mathcal{A}\eta + \big(d - (1-\tau^2)\lvert\xi^\top\rvert^2\big)\eta-d(1-\tau^2) \bprodesc{\eta}{\xi}\xi^\perp -  3(1-\tau^2) [J(J\eta)^\top]^\perp,
\end{equation} 
where $\top$ and $\perp$ denote respectively  tangential and normal components to $M$.

In the particular case $M$ is an orientable hypersurface, i.e.\ $d = 2n$, and $N$ is a unit normal vector field to $\Phi$, any normal section $\eta$ can be written $\eta=fN$ and so $\mathfrak{X}^{\perp}(M)\equiv \mathcal{C}^{\infty}(M)$ ($\eta \equiv f $).  The operator  $\mathcal{L}$ becomes in the Schrödinger operator $\mathcal{L}:\mathcal{C}^{\infty}(M)\rightarrow \mathcal{C}^{\infty}(M)$  given by 
\begin{equation}\label{eq:Jacobi-operator-hypersurfaces}
  \mathcal{L}  = \Delta + \lvert\sigma\rvert^2 + 2n - 2(1-\tau^2) \bigl((n+1) \nu^2 - 1\bigr) ,
\end{equation}
where $\Delta$ is the Laplacian operator on $M$ and $\nu = \bprodesc{\xi}{N}$ is the so-called \emph{angle function}. Considering the constant function $1$ as a test function, we obtain that
\[
\begin{split}
  \mathcal{Q}(1) &= -\int_M\lvert\sigma\rvert^2 \,\mathrm{d}v-\int_M2(n+1-\tau^2)\, \mathrm{d}v+\int_M2(n+1)(1-\tau^2)\nu^2\, \mathrm{d}v\\
                 &\leq  -\int_M2(n+1-\tau^2)\,\mathrm{d}v+\int_M2(n+1)(1-\tau^2)\, \mathrm{d}v=-\int_M2n\tau^2\, \mathrm{d}v,
\end{split}
\]
where we have used that $\nu^2\leq 1$.  Hence we get the following result:
\begin{proposition}\label{prop:hypersurface-unstable}
Every  orientable compact minimal hypersurface of $\mathbb{S}^{2n+1}_{\tau}$ is unstable and the first eigenvalue of $\mathcal{L}$ is $\lambda_1\leq -2n\tau^2$.
\end{proposition}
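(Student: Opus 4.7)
The plan is to exploit the explicit expression~\eqref{eq:Jacobi-operator-hypersurfaces} for the Jacobi operator of an orientable compact minimal hypersurface and test it against the simplest possible function, namely the constant $1 \in \mathcal{C}^{\infty}(M)$. Since $\Delta 1 = 0$, only the zeroth-order part of $\mathcal{L}$ contributes, and the quadratic form reduces to the integral of the potential
\[
\lvert\sigma\rvert^2 + 2n - 2(1-\tau^2)\bigl((n+1)\nu^2 - 1\bigr)
\]
over $M$. This is a clean input because both $\lvert\sigma\rvert^2$ and $\nu^2$ obey sign or boundedness constraints that are forced by the geometry alone.

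Next, I would estimate $-\mathcal{Q}(1)$ from below. Using $\lvert\sigma\rvert^2 \geq 0$ to discard the second fundamental form, and the pointwise bound $\nu^2 = \bprodesc{\xi}{N}^2 \leq \lvert\xi\rvert^2 = 1$, which is valid because $\xi$ is a unit field for the Berger metric, the integrand is bounded below by
\[
2n - 2(1-\tau^2)\bigl((n+1) - 1\bigr) = 2n - 2n(1-\tau^2) = 2n\tau^2.
\]
Integrating gives $\mathcal{Q}(1) \leq -2n\tau^2\,\mathrm{Vol}(M) < 0$, which already forces $\Ind(\Phi) \geq 1$ and hence instability.

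For the eigenvalue bound, I would invoke the Rayleigh quotient characterization of the first eigenvalue of the Schrödinger operator $\mathcal{L}$ on the (trivial, rank-one) normal bundle:
\[
\lambda_1 = \inf_{f \not\equiv 0} \frac{-\int_M f\,\mathcal{L}f\,\mathrm{d}v}{\int_M f^2\,\mathrm{d}v} \leq \frac{\mathcal{Q}(1)}{\mathrm{Vol}(M)} \leq -2n\tau^2,
\]
which is exactly the inequality claimed. There is no genuine obstacle here; the only thing to double-check is the orientation/hypersurface hypothesis, which is needed to trivialize the normal bundle so that $\eta = fN$ identifies $\mathfrak{X}^{\perp}(M)$ with $\mathcal{C}^{\infty}(M)$ and lets the constant function be used as a legitimate test section.
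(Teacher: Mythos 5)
Your proposal is correct and is essentially identical to the paper's argument: the authors also test the quadratic form against the constant function $1$, use $\lvert\sigma\rvert^2\geq 0$ and $\nu^2\leq 1$ to get $\mathcal{Q}(1)\leq -2n\tau^2\,\mathrm{vol}(M)$, and conclude via the variational characterization of $\lambda_1$. No differences worth noting.
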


Now we define an important family of minimal submanifolds of $\mathbb{S}^{2n+1}_{\tau}$.

\begin{example}\label{ex:induced-bundle} 
  \begin{enumerate}[(1)]
  \item \emph{Induced $\mathbb{S}^1$-bundle by the Hopf fibration over a complex submanifold of $\mathbb{CP}^n(4)$}. Let $\Psi: N^{2m} \to \mathbb{CP}^n(4)$ be  a complex immersion of a  complex manifold $N$ and  
  $$
    M_0^{2m+1} = \{(p,x)\in N\times \mathbb{S}^{2n+1}_{\tau}\colon \Psi(p)=\pi(x)\},
  $$
where $\pi:\mathbb{S}^{2n+1}_{\tau}\rightarrow\mathbb{CP}^n(4)$ is the Hopf fibration. We define 
   $\pi_0: M_0 \to N$  by ${\pi_0}(p,x) = p$ and  $\Phi_0: M_0 \to \mathbb{S}^{2n+1}_\tau$  by $\Phi_0(p,x) = x$. Then, the following diagram is commutative:
\begin{equation}\label{eq:diagram}
		\begin{CD}
			M_0^{2m+1}   @>\Phi_0>>   \mathbb{S}^{2n+1}_\tau \\
			@V{\pi_0}VV                                    @VV{\pi}V\\
			N^{2m}                  @>>\Psi>      \mathbb{CP}^n(4)
		\end{CD}\quad\quad \pi\circ\Phi_0=\Psi\circ{\pi_0},
	\end{equation}
and $M_0^{2m+1}$ is a $\mathbb{S}^1$-bundle over $N^{2n}$, where the action $\mathbb{S}^1\times M_0\rightarrow M_0$ is given by $z \cdot (p,x)=(p,zx)$ for any $z\in\mathbb{S}^1$ and $(p,x)\in M_0$. Now, as  $\xi$ is tangent to $M_0$, i.e. $\xi^{\perp}=0$, and $\Psi$ is a minimal immersion,  it is easy to check  that $\Phi_0$ is  also a minimal immersion, which will be called the induced bundle by the Hopf fibration over the complex immersion $\Psi:N^{2m}\rightarrow\mathbb{CP}^n(4)$.

We observe that $\Phi_0$ is an embedding if and only if $\Psi$ is an embedding. 

  \item \emph{$\mathbb{S}^1$-bundle compatible with the Hopf fibration over a complex submanifold of $\mathbb{CP}^n(4)$.} An immersion $\Phi: M^{2m+1} \to \mathbb{S}^{2n+1}_\tau$ is called a $\mathbb{S}^1$-bundle compatible with the Hopf fibration if there exists a complex immersion $\Psi: N^{2n} \to \mathbb{CP}^n(4)$ such that $M$ is a $\mathbb{S}^1$-bundle over $N$ whose associated projection  $\hat{\pi}:M\rightarrow N$ satisfies $\Psi \circ \hat{\pi} = \pi \circ \Phi$.

    Since in this case $\xi$ is also tangent to $M$ and $\Psi$ is minimal we get that $\Phi$ is a minimal immersion.

    Moreover, if $\Phi_0:M_0^{2m+1}\rightarrow\mathbb{S}^{2n+1}_{\tau}$ is the induced $\mathbb{S}^1$-bundle over $N$, then there exists an integer $s\geq 1$ and a $s$-sheeted covering map $\widetilde{\pi}:M\rightarrow M_0$ such that $\pi_0\circ\widetilde{\pi}=\hat{\pi}$ and $\Phi=\Phi_0\circ\widetilde{\pi}$, where $\pi_0:M_0\rightarrow N$ is the projection. In such case, the following diagrams are commutative
	\begin{equation}\label{eq:diagram-Hopf-compatible-S1-bundle-induced-bundle}
    \xymatrix{
      M^{2m+1} \ar[r]^{\tilde{\pi}} \ar@/^2pc/[rr]^\Phi \ar[dr]_{\hat{\pi}} & M_0^{2m+1} \ar[r]^{\Phi_0} \ar[d]^{\pi_0} & \mathbb{S}^{2n+1}_\tau \ar[d]^\pi \\
                                                                              & N^{2m} \ar[r]_\Psi& \mathbb{CP}^n(4)
    }
	\end{equation}

  In fact, let $\varphi:\mathbb{R} \times M\rightarrow M$ be the uniparametric group of $\hat{\xi}$, where $\hat{\xi}$ is the restriction of $\xi$ to $M$. Then $(\Phi\circ\varphi)(t,p)=\zeta(t,\Phi(p))$ (see \eqref{eq:uniparametric}). If $\lambda\in \mathbb{R}$ is the minimum period of all the curves $\varphi_p$ (see \cite{BW1958}), then $\zeta(t+\lambda,\Phi(p))=\zeta(t,\Phi(p))$ for any $p\in M$ and so there exists an integer $s\geq 1$ such that $\lambda=2\pi\tau s$.

  The action $\mathbb{S}^1 \times M \to M$ is given by
\[
  e^{it}\cdot p=\varphi(s \tau t,p),\quad 0\leq t\leq 2\pi, 
\] 
so the subgroup $G_s\subset \mathbb{S}^1$ given by $G_s=\{1, e^{\frac{i2\pi}{ s}},\dots,e^{\frac{i2\pi(s-1)}{ s}}\}$ also acts on $M$ and $\hat{\Phi}:M/G_s\rightarrow \mathbb{S}^{2n+1}_{\tau}$ given by $\hat{\Phi}([p])=\Phi(p)$ is well-defined and it is also a $\mathbb{S}^1$-bundle over $N$ compatible with the Hopf fibration.
 
Let $\Phi_0:M_0^{2m+1}\rightarrow\mathbb{S}^{2n+1}_{\tau}$ be the induced $\mathbb{S}^1$-bundle over the complex submanifold $\Psi:N\rightarrow\mathbb{CP}^n(4)$. Then it is easy to see that
 \begin{eqnarray*}
F: M/G_s\rightarrow M_0\\
 F([p])=(\hat\pi(p),\Phi(p))
\end{eqnarray*}
is a $\mathbb{S}^1$-bundle isomorphism and so the assertion follows.

The integer $s\geq 1$ will be called the \emph{order of the immersion $\Phi$}.
  \end{enumerate}
\end{example}

In the next result we characterize two important families of minimal submanifolds of $\mathbb{S}^{2n+1}_{\tau}$ which will play an important role along the paper.
\begin{proposition}\label{prop:killing-tangente-normal}
Let $\Phi:M^d\rightarrow\mathbb{S}^{2n+1}_{\tau}$  be a minimal immersion of a  $d$-dimensional manifold $M$.
\begin{enumerate}[(i)]
	\item The normal component of the Killing vector field $\xi$ vanishes identically and the normal bundle of $\Phi$ is invariant under the complex structure $J$ of $\mathbb{CP}^{n+1}(4(1-\tau^2))$ if and only if, $d=2m+1$ is odd and $M$ is a $\mathbb{S}^1$-bundle $\hat{\pi}: M^{2m+1}\rightarrow N^{2m}$ over a complex submanifold $\Psi:N^{2m}\rightarrow \mathbb{CP}^n(4)$ compatible with the Hopf fibration.

  \item  The tangent component of the Killing vector field $\xi$ vanishes identically if and only if, $J(TM)\subset T^{\perp}M$. In particular $\Phi:M^d\rightarrow\mathbb{S}^{2n+1}_{\tau}\subset\mathbb{CP}^{n+1}(4(1-\tau^2))$ is a totally real immersion and $d\leq n$. In this case the metrics induced on $M$ by the Berger metric $\langle{\cdot},{\cdot}\rangle$ and the standard metric $g$ are the same.
\end{enumerate}	
In both cases, the immersion $\Phi:M^d\rightarrow \mathbb{S}^{2n+1}_{\tau'}$ is also minimal for all $\tau'\in (0,1]$.	
\end{proposition}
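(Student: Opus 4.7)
For part~(ii), the key identity is $\nablab_X\xi=\tau J(X-\bprodesc{X}{\xi}\xi)$ from~\eqref{eq:covariant-derivative-xi}. Assuming $\xi^\top\equiv 0$, the vector $\xi$ is normal to $M$, and for every $X\in TM$ this identity reduces to $\nablab_X\xi=\tau JX$. Comparing with the Weingarten formula $\nablab_X\xi=-A_\xi X+\nabla^\perp_X \xi$ and taking the tangent component yields $(JX)^\top=-\tfrac{1}{\tau}A_\xi X$. The right-hand side is symmetric in $X$ because $A_\xi$ is self-adjoint, while the left-hand side is skew-symmetric: indeed, $J$ is skew-Hermitian for the Fubini--Study metric of $\mathbb{CP}^{n+1}(4(1-\tau^2))$, which agrees with the Berger metric on $TM\subset\mathcal{H}$. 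Both expressions must therefore vanish, giving $J(TM)\subset T^\perp M$. The converse is direct: if $JX\in T^\perp M\subset T\mathbb{S}^{2n+1}_\tau$ then $JX$ is tangent to the sphere, so $\bprodesc{JX}{J\xi}=0$ because $J\xi$ is the unit ambient normal, and $J$ being a Fubini--Study isometry gives $\bprodesc{X}{\xi}=0$. The bound $d\leq n$ follows from $TM\oplus JTM\subset\mathcal{H}$ (real dimension $2n$), and the two induced metrics on $M$ coincide because $TM\subset\mathcal{H}$, where the Berger and Euclidean metrics agree.

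For part~(i), set $\hat\xi:=\xi|_M$, which is a nowhere-vanishing unit tangent vector field by hypothesis. Its integral curves on $M$ are mapped by $\Phi$ onto Hopf fibres, which are closed of period $2\pi\tau$; the common-period argument of~\cite{BW1958} (already invoked in Example~\ref{ex:induced-bundle}(2)) yields an integer $s\geq 1$ such that every orbit of $\hat\xi$ has minimum period $2\pi\tau s$, producing a locally free circle action on $M$. The quotient $N:=M/\mathbb{S}^1$ is a smooth manifold of dimension $d-1$, the projection $\hat\pi:M\to N$ makes $M$ an $\mathbb{S}^1$-bundle over $N$, and $\pi\circ\Phi$ descends to a smooth immersion $\Psi:N\to\mathbb{CP}^n(4)$ with $\Psi\circ\hat\pi=\pi\circ\Phi$. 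The ambient splitting $T\mathbb{S}^{2n+1}=\langle\xi\rangle\oplus\mathcal{H}$ restricts to $TM=\langle\xi\rangle\oplus\mathcal{H}_M$, with $\mathcal{H}_M:=TM\cap\mathcal{H}$, and to the orthogonal decomposition $\mathcal{H}|_M=\mathcal{H}_M\oplus T^\perp M$; the isometry $d\pi\colon\mathcal{H}_M\to T\Psi(N)$ intertwines $J$ with the Fubini--Study complex structure of $\mathbb{CP}^n(4)$. Since $J$ preserves $\mathcal{H}$, the hypothesis $J(T^\perp M)\subset T^\perp M$ is equivalent to $J(\mathcal{H}_M)\subset\mathcal{H}_M$, i.e.\ to $\Psi$ being a complex (hence minimal) immersion; in particular $\dim\mathcal{H}_M=d-1$ is even, so $d=2m+1$, and $\Phi$ is an $\mathbb{S}^1$-bundle compatible with the Hopf fibration. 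The converse direction is immediate from the definition in Example~\ref{ex:induced-bundle}(2).

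For the minimality-for-all-$\tau'$ assertion, the point is that $\mathcal{H}=\{v\in T\mathbb{S}^{2n+1}:g(v,iz)=0\}$ makes the decomposition $T\mathbb{S}^{2n+1}=\langle\xi\rangle\oplus\mathcal{H}$ $\tau$-independent as a splitting of vector bundles, so under either of our hypotheses the subbundles $TM$ and $T^\perp M$, which split compatibly with it, are $\tau$-independent as well. Choose a Berger-orthonormal frame of $TM$ of the form $\{\xi,e_1,\ldots,e_{2m}\}$ in case~(i) or $\{e_1,\ldots,e_d\}$ in case~(ii), with each $e_j$ horizontal; on such vectors the correction term in~\eqref{eq:Levi-Civita-connection-Berger-round} vanishes, so $\nablab_{e_j}e_j=\nablar_{e_j}e_j$ is independent of $\tau$, while $\nablab_\xi\xi=0$ by~\eqref{eq:covariant-derivative-xi}. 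Projecting on the $\tau$-independent normal bundle $T^\perp M$ then produces a $\tau$-independent mean curvature vector, so minimality at one value of $\tau$ transfers to every $\tau'\in(0,1]$. The main technical subtlety of the argument is the construction of the honest circle action on $M$ in part~(i): because $\Phi$ is assumed only to be an immersion, producing the common period of the flow of $\hat\xi$ requires the result of~\cite{BW1958}; the remainder is a careful bookkeeping of how $J$ acts on the $\tau$-independent splitting $T\mathbb{S}^{2n+1}=\langle\xi\rangle\oplus\mathcal{H}$.
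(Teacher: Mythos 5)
Your part (ii), both converses, and the final minimality-for-all-$\tau'$ claim coincide in substance with the paper's arguments: the same comparison of the Weingarten formula with \eqref{eq:covariant-derivative-xi} followed by the symmetric-versus-skew observation yields $A_\xi=0$ and $(JX)^\top=0$; the same use of the ambient unit normal $J\xi$ gives the converse of (ii); and the $\tau$-independence of the normal bundle together with the vanishing of the correction term in \eqref{eq:Levi-Civita-connection-Berger-round} on an adapted frame is exactly how the paper transfers minimality to every $\tau'$.

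For the direct implication in part (i) you take a genuinely different route, and this is where the gap lies. The paper shows that $\eta(v)=\bprodesc{\hat{\xi}}{v}$ is a contact form (computing $\mathrm{d}\eta(X,Y)=2\tau\bprodesc{JX}{Y}$ and using the $J$-invariance of $\mathcal{D}$ to get $\eta\wedge(\mathrm{d}\eta)^m\neq 0$) and then invokes Boothby--Wang and Morimoto to obtain in one stroke the principal $\mathbb{S}^1$-bundle structure and the complex base. You instead try to build the circle action by hand. The second half of your argument --- the $\tau$-independent splitting $\mathcal{H}|_M=\mathcal{H}_M\oplus T^\perp M$ and the equivalence of $J(T^\perp M)\subset T^\perp M$ with $\Psi$ being a complex immersion --- is correct and even sidesteps Morimoto. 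But the bundle construction itself is not justified. First, closedness of the $\hat{\xi}$-orbits in $M$ does not follow immediately from closedness of the Hopf fibres, because $\Phi$ is only an immersion; one needs, e.g., finiteness of $\Phi^{-1}(\Phi(p))$ to see that each orbit is periodic. More seriously, the common-period lemma of \cite{BW1958} produces a \emph{common} period $2\pi\tau s$, not the statement you assert that $2\pi\tau s$ is the \emph{minimal} period of every orbit: a merely locally free circle action admits exceptional orbits with finite isotropy, in which case $M/\mathbb{S}^1$ is only an orbifold and $\hat{\pi}$ a Seifert fibration rather than an $\mathbb{S}^1$-bundle. To close this you must either rule out exceptional orbits directly (this is possible here: each orbit covers its Hopf fibre with some integer degree $k_p$, and local injectivity of $\Phi$ shows that $\{q\in M\colon \varphi(2\pi\tau k_p,q)=q\}$ is open and closed, so $k$ is constant and the rescaled action is free) or follow the paper's detour through the regular contact structure, which is precisely what the Boothby--Wang machinery is designed to deliver.
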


\begin{proof}
  (i) Suppose that $\xi^\perp = 0$ and the normal bundle of $\Phi$ is invariant under the complex structure $J$. Then, as the normal bundle is of even dimension, we get that $d=2m+1$ for some integer $m\geq 0$. Also, as $\xi^{\perp}=0$, the restriction $\hat{\xi}$ of $\xi$ to $M$ is tangent to $M$ and so $TM$ orthogonally decomposes as $TM=\mathcal{D}\oplus\langle\hat{\xi}\rangle$, where the subbundle $\mathcal{D}$ is invariant under $J$.  

  If $\eta$ is the  $1$-form on $M$ given by $\eta(v)=\langle\hat{\xi},v\rangle$, then using \eqref{eq:Levi-Civita-connection-Berger-round} and that $\xi^{\perp}=0$, it follows that the differential of $\eta$ is given by $\mathrm{d}\eta(X,Y)=2\tau\langle JX,Y\rangle$ for any vector fields $X,Y$ tangent to $M$. So, if $\{e_1,\dots,e_m,Je_1,\dots,Je_m\}$ is an orthonormal basis of   $\mathcal{D}_p, \, p\in M$, we have that
\[
  (\eta\wedge(\mathrm{d}\eta)^m)_p(\hat{\xi}_p,e_1,\dots,e_m,Je_1,\dots,Je_m)\not=0, 
\] 
and hence $\eta$ defines a contact structure on $M$. We remark that, in particular, $M$ is orientable. Now, from a result of W. Boothby and H. Wang~\cite{BW1958} and A. Morimoto~\cite{Mor1964}, we have that $M$ is a $\mathbb{S}^1$-principal bundle over a complex manifold $N^{2m}$. If $\hat{\pi}:M\rightarrow N$ is the projection, then we define $\Psi:N\rightarrow \mathbb{CP}^n(4)$ by $\Psi(q)=\pi(\Phi(p))$, where $p\in\hat{\pi}^{-1}(q)$. It is clear that $\Psi$ is well defined, i.e., it is independent of the point $p$ in $\hat{\pi}^{-1}(q)$, and so $\Psi$ defines a complex immersion of the manifold $N$ in $\mathbb{CP}^n(4)$ satisfying $\Psi \circ \hat{\pi} = \pi \circ \Phi$. More precisely, $\Phi: M \to \mathbb{S}^{2n+1}_\tau$ is a $\mathbb{S}^1$-bundle compatible with the Hopf fibration (see \autoref{ex:induced-bundle}.(2)).

Conversely, if $M^{2m+1}$  is a $\mathbb{S}^1$-bundle $\hat{\pi}: M^{2m+1}\rightarrow N^{2m}$ over a complex submanifold $\Psi:M^{2m}\rightarrow \mathbb{CP}^n(4)$ compatible with the Hopf fibration, then $\xi^\perp = 0$ and, as the normal bundle of $\Psi$ is complex,  then the normal bundle of $\Phi$ is invariant under the complex structure $J$.

(ii) If $\xi^\top=0$, then $\xi\in\mathfrak{X}^{\perp}(M)$, and by \eqref{eq:covariant-derivative-xi} and 	the Weingarten equation:
\[
\tau JX=\nablab_X\xi=-A_{\xi}X +\nabla^{\perp}_X\xi,\quad \forall X\in\mathfrak{X}(M).
\]
So $\langle\sigma(X,Y),\xi\rangle=-\tau\langle JX,Y\rangle$. As $\sigma$ is symmetric and $\langle J-,-\rangle$ skew-symmetric, we obtain that $A_{\xi}=0$ and hence last equation says that $JX\in\mathfrak{X}^\perp(M)$. 

Conversely, if $J(TM) \subset T^\perp M$ then, since $J\xi$ is normal to $\mathbb{S}^{2n+1}_\tau$ in $\mathbb{CP}^{n+1}(4(1-\tau^2))$ (see \autoref{prop:geodesic-spheres-complex-spaces}),
\[
  0 = \bprodesc{JX}{J\xi} = \bprodesc{X}{\xi}, \qquad \text{for any } X \in \mathfrak{X}(M),
\] 
so $\xi$ is normal to $\Phi$ which finishes the proof of (ii).

Finally, in both cases, from \eqref{eq:defini-metrica Berger} it follows that
\[
  0=\bprodesc{v}{\eta}=\prodesc{v}{\eta} = \langle v, \eta \rangle_{\tau'},\quad \forall v\in TM,\,\eta\in T^{\perp}M,\,\tau'\in (0,1],
\]
which means that $T^{\perp}M=T^{\perp}_{\tau'}M$, where $T^\perp_{\tau'}M$ stands for the normal bundle of the immersion $\Phi: M \to \mathbb{S}^{2n+1}_{\tau'}$, for any $\tau'\in (0,1]$. Under this condition, taking normal components in \eqref{eq:Levi-Civita-connection-Berger-round}, the mean curvature $H_g$ of the immersion $\Phi$ with respect to the metric $g$ is given by 
\[
dH_g=\tfrac{2(1-\tau^2)}{\tau}\sum_{i=1}^d\langle \xi,e_i\rangle (Je_i)^\perp=(J\xi^{\top})^{\perp}=0,
\]
which means that $\Phi$ is also minimal with respect to the metric $g$. Repeating the argument, we get that $\Phi:M\rightarrow \mathbb{S}^{2n+1}_{\tau'}$ is minimal for any $\tau'\in (0,1]$. 
\end{proof}

\begin{proposition}\label{prop:induced-index}
  Let $\Phi: M^{2m+1} \to \mathbb{S}^{2n+1}_\tau$ be a $\mathbb{S}^1$-bundle compatible with the Hopf fibration over a complex submanifold $\Psi: N \to \mathbb{CP}^n(4)$. Then, the multiplicities of the eigenvalues of the Jacobi operator of $\Phi$ are even. In particular, its index and nullity are even. 
\end{proposition}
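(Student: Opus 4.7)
By \autoref{prop:killing-tangente-normal}(i) the complex structure $J$ of $\mathbb{CP}^{n+1}(4(1-\tau^2))$ preserves $T^\perp M$ and $\xi^\perp = 0$. Hence $J$ restricts to a bundle endomorphism of $T^\perp M$ with $J^2 = -I$. The plan is to prove that the Jacobi operator $\mathcal{L}$ commutes with this $J$; once done, every eigenspace of $\mathcal{L}$ becomes a finite dimensional complex vector space and therefore has even real dimension, which establishes the statement for all multiplicities, and in particular for $\Ind(\Phi)$ and $\Nul(\Phi)$.

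A preliminary simplification: substituting $\xi^\perp=0$ (hence $|\xi^\top|^2=1$) and $(J\eta)^\top=0$ (since $J\eta\in T^\perp M$) into \eqref{eq:Jacobi-operator} kills the last two terms and leaves
\[
\mathcal{L}\eta = \Delta^\perp\eta + \mathcal{A}\eta + (2m+\tau^2)\eta.
\]
As the constant term commutes with $J$ trivially, I would separately check $[\Delta^\perp, J] = 0$ and $[\mathcal{A}, J] = 0$. For the first identity I would invoke the Kähler condition $\nablap J = 0$, where $\nablap$ is the Levi-Civita connection of $\mathbb{CP}^{n+1}(4(1-\tau^2))$. By \eqref{eq:2ff-Berger-complex-space} one has $\hat{\sigma}(X, \eta) = 0 = \hat{\sigma}(X, J\eta)$ for $X\in TM$ (because $\eta$ and $J\eta$ are horizontal and orthogonal to $TM$), so $\nablap_X\eta = \nablab_X\eta$ and $\nablap_X(J\eta) = \nablab_X(J\eta)$; combining with $\nablap_X(J\eta) = J\nablap_X\eta$ yields $\nablab_X(J\eta) = J\nablab_X\eta$. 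On the other hand, \eqref{eq:covariant-derivative-xi} together with the $J$-invariance of the horizontal subbundle $\mathcal{D}\subset TM$ (established in the proof of \autoref{prop:killing-tangente-normal}) implies $\nablab_X\xi\in\mathcal{D}$, hence $A_\eta\xi=0$ and the image of $A_\eta$ is contained in $\mathcal{D}$. Since $J$ preserves $\mathcal{D}$, decomposing $\nablab_X(J\eta)$ and $J\nablab_X\eta$ along $TM\oplus T^\perp M$ yields the two identities
\[
A_{J\eta} = J\circ A_\eta, \qquad \nabla^\perp_X(J\eta) = J\nabla^\perp_X\eta,
\]
the second of which amounts to $\nabla^\perp J = 0$ and immediately gives $[\Delta^\perp, J]=0$.

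For $[\mathcal{A}, J]=0$ I would exploit the fact that $A_\eta$ and $A_{J\eta}$ are self-adjoint on $TM$ while $J|_{\mathcal{D}}$ is skew-adjoint: the identity $A_{J\eta} = JA_\eta$ then forces $A_\eta J = -JA_\eta$ on $\mathcal{D}$. Combined with $\bprodesc{\mathcal{A}(\eta)}{\eta'} = \trace(A_\eta A_{\eta'})$ (the trace taken on $\mathcal{D}$, since $A_\eta\xi=0$), a short cyclic computation yields $\bprodesc{\mathcal{A}(J\eta)}{\eta'} = \bprodesc{J\mathcal{A}(\eta)}{\eta'}$ and hence $[\mathcal{A}, J]=0$. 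The hardest step is precisely this last commutation: $J$ does not preserve $TM$ as a whole (it sends $\xi$ into the direction $J\xi$, which is normal to $\mathbb{S}^{2n+1}_\tau$ inside $\mathbb{CP}^{n+1}(4(1-\tau^2))$), so one cannot argue by pure transport and has to trade the naive ``$J$ commutes with shape operators'' for the refined anti-commutation $A_\eta J + JA_\eta = 0$ on the horizontal subbundle $\mathcal{D}$.
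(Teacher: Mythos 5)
Your proposal is correct and follows essentially the same route as the paper: both establish $\xi^\perp=0$, the $J$-invariance of $\mathcal{D}$ and $T^\perp M$, and then use the K\"ahler condition $\nablap J=0$ together with \eqref{eq:2ff-Berger-complex-space} and $\sigma(\cdot,\xi)=0$ to obtain $A_{J\eta}=JA_\eta=-A_\eta J$ and $\nabla^\perp_X J\eta=J\nabla^\perp_X\eta$, whence $\mathcal{L}J=J\mathcal{L}$ and even-dimensional eigenspaces. The only cosmetic difference is that you verify $[\mathcal{A},J]=0$ via the trace identity $\bprodesc{\mathcal{A}\eta}{\eta'}=\trace(A_\eta A_{\eta'})$ and the anticommutation on $\mathcal{D}$, whereas the paper reads it off directly from $\sigma(X,JY)=J\sigma(X,Y)$.
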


\begin{proof}
Firstly, as $\xi^{\perp}=0$, we can decompose $TM=\mathcal{D}\oplus\langle\xi\rangle$. As $\Psi$ is a complex immersion, $\mathcal{D}$ and the normal bundle $T^{\perp}M$ are invariant by the complex structure $J$ of $\mathbb{CP}^{n+1}(4(1-\tau^2))$. Hence, by \eqref{eq:Jacobi-operator}, the Jacobi operator is 
\begin{equation}\label{eq:Jacobi-operator-family-xi-tangent}
\mathcal{L} \eta = \Delta^\perp \eta + \mathcal{A} \eta + (2m + \tau^2)\eta.
\end{equation}

Moreover, as $TM=\mathcal{D}\oplus\langle\xi\rangle$, then for any  vector field $X \in\Gamma(\mathcal{D})$,  \eqref{eq:covariant-derivative-xi} and the fact that $\mathcal{D}$ is invariant by $J$ imply that 
\begin{equation}\label{eq:properties-covariant-derivative-2ff-immersion-xi-tangent-J-normal}
\nabla_{\xi}\xi=0,\quad	\nabla_X \xi = \tau JX, \quad\sigma(\xi,\xi)=0,\quad \sigma(X, \xi) = 0.
\end{equation}
If $\nablap$ is the Levi-Civita connection of $\mathbb{CP}^{n+1}(4(1-\tau^2))$, using the Gauss and Weingarten formulae joint with  \eqref{eq:2ff-Berger-complex-space} and \eqref{eq:properties-covariant-derivative-2ff-immersion-xi-tangent-J-normal} in the equation $\nablap J=0$, it is easy to get that 
\begin{equation}\label{eq:properties-immersion-xi-tangent-J-normal}
	\begin{gathered}
		\nabla_X JY = J\bigl(\nabla_X Y +\tau \bprodesc{JX }{Y}\xi\bigr) - \tau \bprodesc{X}{Y}\xi, \quad\quad \sigma(X, JY) = J \sigma(X, Y)\\
	A_{J\eta}X = J A_\eta X=-A_{\eta}JX,\quad\quad
		\nabla^\perp_X J\eta = J \nabla^\perp_X \eta,
	\end{gathered}
\end{equation} 
for any $X,Y \in\Gamma( \mathcal{D})$   and any $\eta \in \mathfrak{X}^\perp(M)$.

Using \eqref{eq:properties-immersion-xi-tangent-J-normal} we easily get that $\Delta^{\perp}J=J\Delta^{\perp}$ and $\mathcal{A}J=J\mathcal{A}$, and so $\mathcal{L}J=J\mathcal{L}$. This implies that the eigenspace associated to an eigenvalue of $\mathcal{L}$ is invariant under $J$ and hence of dimension even.  
\end{proof}

Now, we are going to give explicit examples of the two families of minimal submanifolds of $\mathbb{S}^{2n+1}_{\tau}$  studied in \autoref{prop:killing-tangente-normal}.

\begin{example}\label{ex:examples-induced-bundle-by-Hopf-fibration}
  Using the easiest examples of complex submanifolds of $\mathbb{CP}^n(4)$ and following \autoref{ex:induced-bundle},  we give two explicit nice examples of minimal submanifolds of $\mathbb{S}^{2n+1}_{\tau}$ of the family (i) of \autoref{prop:killing-tangente-normal}:

  \begin{enumerate}[(1)]
    \item \label{ex:sphere} Let $m \geq 0$ and $\Psi: \mathbb{CP}^m(4) \to \mathbb{CP}^n(4)$ be the totally geodesic complex embedding given by $\Psi([(z_1,\ldots,z_{m+1})]) = [(z_1,\ldots,z_{m+1},0,\ldots,0)]$. Then, the induced bundle by the Hopf fibration is the minimal embedding $\Phi_0: \mathbb{S}^{2m+1}_\tau \to \mathbb{S}^{2n+1}_\tau$ given by $\Phi(z_1,\ldots, z_{m+1}) = (z_1,\ldots,z_{m+1},0,\ldots,0)$. 

    If $m \geq 1$ then $\mathbb{S}^{2m+1}$ does not have any covering. If $m = 0$ then, for any $s \in \mathbb{N}$ there exists a $s$-sheeted covering $\tilde{\pi}: \mathbb{S}^1 \to \mathbb{S}^1$, $\tilde{\pi}(z) = z^s$, and the corresponding totally geodesic immersion $\Phi_s=\Phi_0\circ\tilde{\pi}:\mathbb{S}^1\rightarrow\mathbb{S}^{2n+1}_{\tau}$ is given by $\Phi_s(z) = (z^s,0,\ldots,0)$. Moreover, the induced metric by $\Phi_s$ in $\mathbb{S}^1$ is $s^2 \tau^2 g$.

    \item \label{ex:Veronese}
  Let $\Psi: \mathbb{C}\mathbb{P}^1(2) \to \mathbb{C}\mathbb{P}^2(4)$ be the degree two Veronese complex embedding given by $\Psi([z,w]) = [(z^2, \sqrt{2}zw, w^2)]$ and $$ M_0=\{\bigl([(z,w)], x\bigr)\in \mathbb{CP}^1(2)\times\mathbb{S}^5_{\tau}\,:\, [z,w]\in\mathbb{CP}^1(2),\, \pi(x)=[z^2,\sqrt{2}zw,w^2]\} $$  its induced bundle by the Hopf fibration. Then $M_0$ can be identified with the three dimensional real projective space $\mathbb{R}\mathbb{P}^3 = \{ [\![(z,w)]\!] \in \mathbb{S}^3 / \{\mathrm{I}, -\mathrm{I}\}\}$ via the diffeomorphism $\mathbb{RP}^3 \to M_0$ given by
  \[
    [\![(z,w)]\!] \mapsto \bigl([(z,w)], (z^2, \sqrt{2}zw, w^2)\bigr).
  \] 
  Now, the 	corresponding minimal embedding $\Phi_0: \mathbb{RP}^3 \to \mathbb{S}^5_\tau$ is given by $\Phi_0([\![(z,w)]\!]) = (z^2, \sqrt{2}zw, w^2)$.

  In this case, $\tilde{\pi}:\mathbb{S}^3\rightarrow\mathbb{RP}^3$ is a $2$-sheeted covering and the minimal immersion $\Phi=\Phi_0\circ\tilde{\pi}:\mathbb{S}^3\rightarrow\mathbb{S}^5_{\tau}$ (see \autoref{ex:induced-bundle}.(2)) is given by $\Phi(z, w) = (z^2, \sqrt{2}zw, w^2)$. The map $\pi_0\circ\tilde{\pi}:\mathbb{S}^3\rightarrow\mathbb{CP}^1(2)$ is the Hopf fibration and the induced metric on $\mathbb{S}^3$ by the immersion $\Phi$ is $2\langle \cdot, \cdot\rangle_{\sqrt{2}\tau}$.
  \end{enumerate}
\end{example}

In the next result we classify the totally geodesic submanifolds of $\mathbb{S}^{2n+1}_{\tau}$, appearing an example of the family described in \autoref{prop:killing-tangente-normal}.(ii).  Although the result must be known, we have not find it in the literature and  we include it for completeness.

\begin{proposition}\label{prop:classification-totally-geodesic}
	Let $\Phi:M^d\rightarrow\mathbb{S}^{2n+1}_{\tau}, 0<\tau<1$,  be an immersion of a   $d$-dimensional manifold $M$. Then $\Phi$ is totally geodesic if and only if, up to congruences, $\Phi(M)$ is an open subset of either one of the following spheres of $\mathbb{S}^{2n+1}_{\tau}$:
	\begin{enumerate}[(i)]
    \item A Berger sphere $\mathbb{S}^{2m+1}_\tau$, $m \geq 0$ given in \autoref{ex:examples-induced-bundle-by-Hopf-fibration}.(\ref{ex:sphere})
		\item $\mathbb{S}^d = \{(a_1, \ldots, a_{d+1}, 0, \ldots,0) \in \mathbb{S}^{2n+1}_\tau \subset \mathbb{C}^{n+1} \colon a_i \in \mathbb{R}\}, 1 \leq d\leq n.$ 
	\end{enumerate}
  or $\Phi(M)$ is a geodesic.
\end{proposition}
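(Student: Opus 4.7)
The plan is to track how the Killing Reeb field $\xi$ interacts with $M$: write $\xi|_M = \xi^\top + \xi^\perp$ with $\xi^\top\in TM$ and $\xi^\perp\in T^\perp M$, and set $\alpha^2 := \bprodesc{\xi^\top}{\xi^\top}$. The clause $d=1$ is immediate, as every totally geodesic one-dimensional submanifold is an open subset of a geodesic. For $d\geq 2$ I will first dispatch the two extremes $\alpha\equiv 0$ and $\alpha\equiv 1$ via \autoref{prop:killing-tangente-normal}, and then rule out any intermediate (``mixed'') value using curvature-invariance of $T_pM$.

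If $\alpha\equiv 0$, \autoref{prop:killing-tangente-normal}(ii) gives $J(TM)\subset T^\perp M$ and that the induced Berger and round metrics on $M$ coincide. Since $\bprodesc{X}{\xi}=0$ for every $X\in TM$, the correction term in \eqref{eq:Levi-Civita-connection-Berger-round} vanishes, so $\nablab$ and $\nablar$ agree on tangent fields of $M$; a direct check using \eqref{eq:defini-metrica Berger} also shows the Berger and round normal bundles of $M$ coincide. Hence $\Phi$ is totally geodesic in the round $\mathbb{S}^{2n+1}$, so $\Phi(M)$ is an open subset of a great sub-sphere $V\cap\mathbb{S}^{2n+1}$ for some linear $V\subset\mathbb{C}^{n+1}$; the totally real condition forces $V$ to be totally real in $\mathbb{C}^{n+1}$, giving $d\leq n$, and a unitary rotation puts $V=\mathbb{R}^{d+1}$, yielding case (ii). If $\alpha\equiv 1$, \autoref{prop:killing-tangente-normal}(i) gives $d=2m+1$ and realizes $\Phi$ as a compatible $\mathbb{S}^1$-bundle over a complex immersion $\Psi:N^{2m}\to\mathbb{CP}^n(4)$. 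Since $M$ contains every Hopf fiber and is totally geodesic, the Riemannian submersion $\pi$ carries horizontal geodesics of $M$ to geodesics of $N$, so $\Psi$ itself is totally geodesic; the totally geodesic complex submanifolds of $\mathbb{CP}^n(4)$ are the linearly embedded $\mathbb{CP}^m(4)$, whose associated Hopf bundles are the Berger sub-spheres $\mathbb{S}^{2m+1}_\tau$ of case (i).

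The main obstacle will be ruling out the mixed case, i.e.\ showing $\alpha(p)\in\{0,1\}$ at every $p\in M$ when $d\geq 2$. Suppose for contradiction $0<\alpha(p)<1$; set $V=T_pM$, $a=\xi^\top_p$, $b=\xi^\perp_p$, and $\beta^2:=1-\alpha^2$. Totally geodesic-ness implies $V$ is curvature-invariant, i.e.\ $\bprodesc{\bR(X,Y)Z}{W}=0$ for all $X,Y,Z\in V$ and $W\in V^\perp$. Using \eqref{eq:Riemann-tensor-Berger} and specializing $X=a$, $Y=Z$ a unit vector in $V\cap a^\perp$, $W=b$, the identity collapses to
\[
  -3\bprodesc{Ja}{Y}\bprodesc{JY}{b} = \alpha^2\beta^2.
\]
For $d\geq 3$ the subspace $V\cap a^\perp$ has dimension at least $2$, so I can pick $Y$ orthogonal to the $T\mathbb{S}^{2n+1}_\tau$-part of $Ja$; then $\bprodesc{Ja}{Y}=0$, giving the immediate contradiction $0=\alpha^2\beta^2$. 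For $d=2$, $V\cap a^\perp$ is one-dimensional; since $\bprodesc{Y}{\xi}=0$ one has $JY\in T\mathbb{S}^{2n+1}_\tau$, and the curvature-invariance constraints force $(JY)^{V^\perp}$ to be a specific scalar multiple of $b$ with $|(JY)^{V^\perp}|^2=\alpha^2/9$, while computing $(JY)^V$ from its components along $a$ and $Y$ gives $|(JY)^V|^2=\beta^2$ and hence $|(JY)^{V^\perp}|^2=\alpha^2$. The forced equality $\alpha^2=\alpha^2/9$ then yields $\alpha=0$, a contradiction. The delicate step in this last subcase is carefully tracking the action of $J$ on the tangent/normal decomposition of $M$ inside $\mathbb{CP}^{n+1}(4(1-\tau^2))$.
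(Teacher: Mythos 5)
Your overall strategy is correct and genuinely different from the paper's at the decisive step. Both proofs reduce to the dichotomy $\lvert\xi^\top\rvert\in\{0,1\}$, but the paper obtains it by noting that $\sigma=0$ forces $\nabla_X\xi^\top=\tau(JX)^\top$, so $\xi^\top$ is a Killing field on $M$, and then feeding this into the second-order Killing identity $\nabla_X\nabla_X\xi^\top-\nabla_{\nabla_XX}\xi^\top+R(\xi^\top,X)X=0$; you instead use the first-order necessary condition that $T_pM$ be curvature-invariant, which is pointwise linear algebra in \eqref{eq:Riemann-tensor-Berger}. Your identity $-3\bprodesc{Ja}{Y}\bprodesc{JY}{b}=\alpha^2\beta^2$ is correct, and in fact your argument simplifies dramatically: since $Y\perp a$ and $Y\perp b$ give $\bprodesc{Y}{\xi}=0$, while $\bprodesc{\xi}{JY}=-\bprodesc{J\xi}{Y}=0$ because $J\xi$ is normal to $\mathbb{S}^{2n+1}_\tau$ in $\mathbb{CP}^{n+1}(4(1-\tau^2))$, one gets $\bprodesc{Ja}{Y}=-\bprodesc{a}{JY}=\bprodesc{JY}{b}$, so the left-hand side equals $-3\bprodesc{JY}{b}^2\leq 0$ and the mixed case is contradictory for every $d\geq 2$ and every unit $Y\in V\cap a^\perp$ — no case division on $d$ is needed. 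The handling of the two extreme cases also differs from the paper, which shows directly that $\sigma_g=0$ in both cases and then identifies the resulting great sphere as complex or totally real; your route through the Riemannian submersion and the classification of totally geodesic complex submanifolds of $\mathbb{CP}^n(4)$ is a legitimate alternative.

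Two defects need repair. First, your $d=2$ subcase as written is not a proof: the assertions $\lvert(JY)^{V^\perp}\rvert^2=\alpha^2/9$ and $\lvert(JY)^V\rvert^2=\beta^2$ do not follow from the constraints you name — one only gets $(JY)^V=\alpha^{-2}\bprodesc{JY}{a}\,a$ and, from curvature invariance against arbitrary $W\in V^\perp$, that $(JY)^{V^\perp}$ is proportional to $b$ with a coefficient involving $\bprodesc{Ja}{Y}$; neither norm is pinned down without the relation $\bprodesc{Ja}{Y}=\bprodesc{JY}{b}$ above, which, once invoked, closes the case immediately as explained. Second, when $\alpha\equiv1$ you cite \autoref{prop:killing-tangente-normal}(i), whose hypothesis also requires $T^\perp M$ to be $J$-invariant; you must first observe that $\sigma=0$ and \eqref{eq:covariant-derivative-xi} make $J$ preserve $TM\cap\xi^\perp$ and hence $T^\perp M$ (this is also what forces $d$ odd). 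Relatedly, the Boothby--Wang construction inside that proposition is global while the statement here is local (``open subset''), so either argue locally with $\pi$ or, as the paper does, first establish $\sigma_g=0$ and recognize the great sphere containing $\Phi(M)$ as a complex linear section.
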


\begin{remark}
  The Killing field $\xi$ satisfies  \ $\xi^\top = 0$ in the example     (ii). The cases $m = 0$ in (i) and $d = 1$ in (ii) correspond to embedded closed geodesic in $\mathbb{S}^{2n+1}_\tau$.
\end{remark}

\begin{proof}
	Firstly, the submanifold presented in item (i)  is totally geodesic since it is the fixed point set of the isometry $A = \left( \begin{smallmatrix} \mathrm{Id}_{m+1} & 0 \\ 0 &  -\mathrm{Id}_{n-m}\end{smallmatrix} \right) \in U(n+1)$.

Also, as item (ii) is also totally geodesic with respect to the metric $g$ on $\mathbb{S}^{2n+1}$, then $\nablar_X Y$ is tangent to $\mathbb{S}^d$ for any $X,Y\in\mathfrak{X}(S^d)$.  Moreover, $\xi^{\top}=0$ and so, from ~\eqref{eq:Levi-Civita-connection-Berger-round}, it follows that $\sigma(X,Y)=0$, and hence $\mathbb{S}^d$ is totally geodesic in $\mathbb{S}^{2n+1}_{\tau}$.

	Suppose now that  $\Phi: M^d \to \mathbb{S}^{2n+1}_\tau$ is a totally geodesic immersion with $d\geq 2$. We consider  the orthogonal decomposition $\xi = \xi^\top + \xi^\perp$ of $\xi$ in tangential and normal components to $\Phi$. Now, by \eqref{eq:covariant-derivative-xi} and as $\sigma=0$, 
	\begin{equation}\label{eq:derivatives-T-N-totally-geodesic}
		\tau J(X - \bprodesc{X}{\xi}\xi) = \nablab_X \xi = \nablab_X(\xi^\top + \xi^\perp) = \nabla_X \xi^\top + \nabla^\perp_X \xi^\perp,
	\end{equation}
	which implies that $\nabla_X \xi^\top = \tau(J X)^\top$ and so $\xi^\top$ is a Killing field over $M$. But  any Killing vector field on $M$ satisfies $\nabla_X \nabla_X \xi^\top  - \nabla_{\nabla_XX} \xi^\top + R(\xi^\top,X)X = 0$ for any tangent vector field $X$.  Now we compute the members of this equation  for any $X$ orthogonal to $\xi^\top$ with $\lvert X\rvert^2=1$. As $\sigma=0$, from Gauss equation and \eqref{eq:Riemann-tensor-Berger} we obtain
\[
R(\xi^\top,X)X=(\bR(\xi^\top,X)X)^{\top}=\big(1-(1-\tau^2)\lvert \xi^\top\rvert^2\big)\xi^\top+3(1-\tau^2)\langle\xi^\top,JX\rangle(JX)^\top.
\]
Also, if $\nablap$ is the Levi-Civita connection of $\mathbb{CP}^{n+1}(4(1-\tau^2))$, as $(\nablap_XJ)X=0$, taking into account that $\sigma=0$ and equations \eqref{eq:2ff-Berger-complex-space} and \eqref{eq:derivatives-T-N-totally-geodesic} we get that
\[
\nabla_X \nabla_X \xi^\top  - \nabla_{\nabla_XX} \xi^\top=\tau\{\nabla_X(JX)^\top-(J\nabla_XX)^\top\}=-\tau\hat{\sigma}(X,X)\xi^\top=-\tau^2\xi^{\top}.
\]
From last two equations we finally obtain that 
	\[
	(1-\tau^2)\bigl[ (1-\lvert\xi^\top\rvert^2)\xi^\top + 3 \bprodesc{\xi^\top}{J X}(J X)^\top\bigr] = 0.
	\] 
	Multiplying by $\xi^\top$ and as  $\tau \not= 1$ we obtain that either $\lvert\xi^\top\rvert = 1$ (i.e.\ $\xi^{\perp}=0$), or $\xi^\top= 0$.

Now, the normal bundles of $\Phi$ with respect to the Berger metric and the standard one $g$ are the same. Hence, from \eqref{eq:Levi-Civita-connection-Berger-round} and taking into account \eqref{eq:derivatives-T-N-totally-geodesic} we deduce that, the second fundamental form $\sigma_g$ of $\Phi: M \to \mathbb{S}^{2n+1}$ with respect to the standard metric satisfies:
\[
\sigma_g(X,Y)=\tfrac{1-\tau^2}{\tau^2}(\langle Y,\xi^\top\rangle\nabla^\perp_X\xi^\perp+\langle X,\xi^\top\rangle\nabla^\perp_Y\xi^\perp)=0.
\]
Hence $\Phi: M \to \mathbb{S}^{2n+1}$ is totally geodesic, and so $\Phi(M)$ is an open subset of a $d$-dimensional sphere  $\mathbb{S}^d\subset\mathbb{S}^{2n+1}$. 

In the case $\xi^\perp = 0$, the complex structure $J$ leaves invariant the normal bundle (see \eqref{eq:covariant-derivative-xi}) so, using \autoref{prop:killing-tangente-normal}, we obtain that, up to an isometry of the Berger sphere, $\Phi$ is an embedding and $\Phi(M)$ is an open subset of one  of the examples shown in the result. When $\xi^{\top}=0$, the result follows from \autoref{prop:killing-tangente-normal}.(ii).
\end{proof}

\begin{example}[Clifford hypersurfaces]\label{ex:Clifford-hypersurfaces}
Now,  we are going to introduce  a family of minimal hypersurfaces of $\mathbb{S}^{2n+1}_{\tau}$, that we will name \emph{Clifford hypersurfaces}. These examples will be those classical Clifford hypersurfaces of $\mathbb{S}^{2n+1}$ which satisfies $\xi^{\perp}=0$ ($\nu=0$), which, from \autoref{prop:killing-tangente-normal}, will be minimal in $\mathbb{S}^{2n+1}_{\tau}$ for any $\tau\in(0,1]$.

Let consider, for any integers $d_1, d_2 \geq 1$ with $d_1 + d_2 = 2n$, the canonical embedding $\Phi: \mathbb{S}^{d_1}(r_1) \times  \mathbb{S}^{d_2}(r_2) \to \mathbb{S}^{2n+1}$ where $r_1=\sqrt{\frac{d_1}{2n}}$ and $r_2=\sqrt{\frac{d_2}{2n}}$ are the radii of the spheres. It is well-known that $\Phi$ is  minimal, that $N_{(p,q)}=(\sqrt{\frac{d_2}{d_1}}p,-\sqrt{\frac{d_1}{d_2}}q)$ is a unit normal vector field to $\Phi$ and that $\lvert\sigma^g\rvert_g^2=d_1+d_2$. Under these conditions it is not difficult to check that $g(N_{(p,q)},i(p,q))=0$ for any $(p,q)\in \mathbb{S}^{d_1}(r_1)\times\mathbb{S}^{d_2}(r_2)$ if and only if $d_1=2m_1+1,\, d_2=2m_2+1$ and, up to congruences,
\[
\mathbb{S}^{2m_1+1}(r_1)\times \mathbb{S}^{2m_2+1}(r_2)=\{(z,w)\in\mathbb{C}^{m_1+1}\times\mathbb{C}^{m_2+1}\colon \lvert z\rvert=r_1,\,\lvert w\rvert=r_2\}\subset\mathbb{S}^{2n+1}_{\tau}.
\]
In fact, since $i(p,q)$ is a unit tangent vector field to $\mathbb{S}^{d_1}(r_1) \times  \mathbb{S}^{d_2}(r_2)$, the Poincaré-Hopf theorem ensures that the Euler characteristic of $\mathbb{S}^{d_1}(r_1) \times  \mathbb{S}^{d_2}(r_2)$ is $0$. This only happens when $d_1$ and $d_2$ are odd.

Now, \autoref{prop:killing-tangente-normal} says that the embedding $\Phi:\mathbb{S}^{2m_1+1}(r_1)\times \mathbb{S}^{2m_2+1}(r_2)\rightarrow\mathbb{S}^{2n+1}_{\tau}$ is a minimal hypersurface for any $\tau\in(0,1]$, satisfying $\xi^{\perp}=0$. From \eqref{eq:defini-metrica Berger}  the normal bundles with respect to the Euclidean metric and the Berger metric are the same and both metric coincide on the normal bundle. So $N$ is also a unit normal vector field to $\Phi:\mathbb{S}^{2m_1+1}(r_1)\times \mathbb{S}^{2m_2+1}(r_2)\rightarrow\mathbb{S}^{2n+1}_{\tau}$ and from \eqref{eq:Levi-Civita-connection-Berger-round} it follows that 
\[
  \prodesc{\sigma^g(X,Y)}{N}=\bprodesc{\sigma(X,Y)}{N} + \tfrac{1-\tau^2}{\tau}[ \bprodesc{Y}{\xi} \bprodesc{JX}{N} + \bprodesc{X}{\xi} \bprodesc{JY}{N}].
\]
From last formula it follows, taking into account that $\lvert\sigma^g\rvert^2_g=2(m_1+m_2+1)$, that 
$\lvert\sigma\rvert^2 = 2(m_1+m_2+\tau^2)$.
\end{example}

In the following results we study the index and the nullity of the  Examples~\ref{ex:examples-induced-bundle-by-Hopf-fibration} and~\ref{ex:Clifford-hypersurfaces}.

\begin{proposition}\label{prop:index-nullity-totally-geodesic-Berger-spheres}
  Let $\Phi:\mathbb{S}^{2m+1}_\tau\rightarrow\mathbb{S}^{2n+1}_\tau$ be the totally geodesic embedding given in \autoref{ex:examples-induced-bundle-by-Hopf-fibration}.(\ref{ex:sphere}). Then:
	\begin{enumerate}[(i)]
		\item $\Ind(\mathbb{S}^{2m+1}_\tau) = \begin{cases}
			0, & \text{if } \tau^2 \leq \frac{1}{2(m+1)}, \\
			2(n-m), & \text{if } \frac{1}{2(m+1)} < \tau^2 \leq 1, \\
		\end{cases}$
		
		\item $\Nul(\mathbb{S}^{2m+1}_\tau) = \begin{cases}
      2(n-m)(m+1), & \text{if } \tau^2 < 1 \text{ and } \tau^2 \neq \frac{1}{2(m+1)}, \\
			2(n-m)(m+2), & \text{if } \tau^2 = \frac{1}{2(m+1)}, \\
      4(n-m)(m+1), & \text{if } \tau^2 = 1.
		\end{cases}$
	\end{enumerate}

    Moreover, let $m = 0$ and $\Phi_s: \mathbb{S}^1 \to \mathbb{S}^{2n+1}_\tau$ be the totally geodesic immersion $\Phi_s(z) = (z^s,0,\ldots,0)$ described in \autoref{ex:examples-induced-bundle-by-Hopf-fibration}.(\ref{ex:sphere}). Then $\Phi_s$ is stable if and only if $\tau^2 \leq \frac{1}{2s}$.
\end{proposition}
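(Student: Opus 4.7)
Since $\Phi:\mathbb{S}^{2m+1}_\tau\hookrightarrow\mathbb{S}^{2n+1}_\tau$ is the $\mathbb{S}^1$-bundle induced by the Hopf fibration over the totally geodesic complex embedding $\mathbb{CP}^m(4)\hookrightarrow\mathbb{CP}^n(4)$, \autoref{prop:killing-tangente-normal}.(i) gives $\xi^\perp=0$ with a $J$-invariant normal bundle; combined with total geodesic-ness (so $\mathcal{A}=0$), formula~\eqref{eq:Jacobi-operator-family-xi-tangent} reduces the Jacobi operator to $\mathcal{L}=\Delta^\perp+(2m+\tau^2)\mathrm{Id}$. The plan is to diagonalize $\Delta^\perp$ explicitly by trivializing $T^\perp M$ with the Euclidean constant vectors $v_j=e_{m+1+j}\in\mathbb{C}^{n+1}$ for $j=1,\dots,n-m$, which are unit, orthogonal to $\Phi(M)$, and satisfy $\bprodesc{v_j}{\xi}=\tau\,g(v_j,iz)=0$ on $\Phi(M)$. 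Since $v_j$ is parallel along $\Phi(M)$ for the standard sphere metric, \eqref{eq:Levi-Civita-connection-Berger-round} delivers the purely-normal expression
\[
  \nabla^\perp_X v_j=-\tfrac{1-\tau^2}{\tau}\bprodesc{X}{\xi}\,Jv_j.
\]

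Next I would view each $L_j=\mathbb{R}\langle v_j,Jv_j\rangle$ as a complex line bundle with basis $v_j$ (so $J\leftrightarrow i$), identifying real normal sections with complex functions via $\eta=Fv_j$ with $F\in C^\infty(M,\mathbb{C})$. A direct Bochner-type computation, using that $\xi|_M$ is a unit Killing vector field on $M$ (so $\diver\xi=0$ and $|\xi|^2=1$), yields
\[
  \Delta^\perp(Fv_j)=\Bigl(\Deltab F-\tfrac{2i(1-\tau^2)}{\tau}L_\xi F-\tfrac{(1-\tau^2)^2}{\tau^2}F\Bigr)v_j.
\]
Applying Tanno's simultaneous decomposition (\autoref{lm:connection-Berger}.(4)) to $(\Deltab,L_\xi)$: on the complex block where $\Deltab F=-\mu_{k,p}F$ and $L_\xi F=i\epsilon(k-2p)/\tau\cdot F$ with $\epsilon\in\{+1,-1\}$, a completion of squares produces the clean eigenvalue
\[
  \lambda^{\mathcal{L}}_{k,p,\epsilon}=2m+1-\lambda_k-\tfrac{1-\tau^2}{\tau^2}(k-2p-\epsilon)^2.
\]

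Sign analysis is then mechanical. For $k\geq 2$ we have $\lambda_k\geq 4(m+1)>2m+1$, so $\lambda^{\mathcal{L}}_{k,p,\epsilon}<0$; $(k,p,\epsilon)=(1,0,+1)$ gives $\lambda^{\mathcal{L}}=0$ identically; $(1,0,-1)$ gives $-4(1-\tau^2)/\tau^2\leq 0$; and only $k=0$ (constants, for which $\epsilon$ is irrelevant) produces $\lambda^{\mathcal{L}}=\frac{(2m+2)\tau^2-1}{\tau^2}$, positive precisely when $\tau^2>1/(2(m+1))$. Counting multiplicities via Tanno's eigenspaces: constants have complex dim $1$, contributing $2(n-m)$ real sections across all $L_j$; $V(\mu_{1,0})^+$ is spanned by the holomorphic coordinates $z_1,\dots,z_{m+1}$ with complex dim $m+1$, contributing $2(m+1)(n-m)$ real zero-eigensections of $\mathcal{L}$ for every $\tau<1$; at $\tau=1$ the $\epsilon=\pm 1$ blocks coalesce into $V(\lambda_1)$ of complex dim $2(m+1)$, contributing $4(m+1)(n-m)$. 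Case-splitting on $\tau^2$ versus $1/(2(m+1))$ and $1$ then assembles the stated $\Ind$ and $\Nul$ formulas.

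For the immersion $\Phi_s(z)=(z^s,0,\dots,0)$ with $m=0$, the same trivialization applies (the condition $\bprodesc{v_j}{\xi}=0$ still holds on $\Phi_s(\mathbb{S}^1)$) and the Bochner identity for $\Delta^\perp(Fv_j)$ is unchanged. The induced metric $s^2\tau^2\,d\theta^2$ makes $\xi=\tfrac{1}{s\tau}\partial_\theta$, and the Fourier basis $F_n=e^{in\theta}$ diagonalizes $(\Deltab,L_\xi)$, giving
\[
  \mathcal{L}(F_n v_j)=\tfrac{\tau^4-(n/s-(1-\tau^2))^2}{\tau^2}\,F_n v_j.
\]
Stability is therefore equivalent to $|n/s-(1-\tau^2)|\geq\tau^2$ for every $n\in\mathbb{Z}$; since the distance from $1-\tau^2$ to $\tfrac{1}{s}\mathbb{Z}$ is at most $\tfrac{1}{2s}$, necessity of $\tau^2\leq\tfrac{1}{2s}$ is immediate, and sufficiency follows by checking $n=s$ (distance exactly $\tau^2$) and $n=s-1$ (distance $1/s-\tau^2\geq\tau^2$). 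The principal obstacle throughout is the careful synthesis of the complex-line-bundle connection calculation with Tanno's joint spectral decomposition so as to arrive at the compact formula for $\lambda^{\mathcal{L}}_{k,p,\epsilon}$; once this is in place, the sign and multiplicity bookkeeping is routine.
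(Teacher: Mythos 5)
Your proposal is correct and follows essentially the same route as the paper: the same trivialization of the normal bundle by the constant vectors $a_j$, the same normal-connection formula, Tanno's joint spectral decomposition, and the same case analysis — your complex-line-bundle packaging (diagonalizing $L_\xi$ over $\mathbb{C}$ rather than squaring the coupled real system for $(f_j,g_j)$) is only a cosmetic streamlining, and your eigenvalue formula agrees with \eqref{eq:totally-geodesic-Berger-spheres-eigenvalues-Jacobi-operator} up to the sign convention $\rho=-\lambda^{\mathcal{L}}$. The multiplicity counts and the treatment of $\Phi_s$ (your parametrization by $n\in\mathbb{Z}$ versus the paper's $\rho_\pm(k)$, $k\geq 0$) also match.
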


\begin{proof} 
From \eqref{eq:Jacobi-operator-family-xi-tangent} and as $\sigma=0$,  we have that the Jacobi operator of $\Phi$ is given by 
  \[
  \mathcal{L}\eta=\Delta^{\perp}\eta+(2m+\tau^2)\eta,
\] 
for any $\eta\in\mathfrak{X}^\perp(\mathbb{S}^{2m+1}_{\tau})$. First, considering the spheres embedded in $\mathbb{C}^{n+1}$, it is clear that $\{a_j,ia_j\colon 1\leq j\leq n-m\}$ defined by 
\[
a_j=(0,\dots,0,\stackrel{m+1+j}{1},0,\dots,0)\in\mathbb{C}^{n+1}
\]
is a global orthonormal reference of the normal bundle of $\Phi$. If $\mathcal{D}_j = \Span\,\{ a_j, ia_j\}$, then the normal bundle can be decomposed as $T^\perp \mathbb{S}^{2m+1}_\tau = \mathcal{D}_1 \oplus \cdots \oplus \mathcal{D}_{n-m}$. Hence any normal section $\eta\in\mathfrak{X}^\perp(\mathbb{S}^{2m+1}_{\tau})$ can be written as $\eta=\sum_{j=1}^{n-m}\{f_ja_j+g_j\, ia_j\}$, with $f_j,g_j\in \mathcal{C}^{\infty}(\mathbb{S}^{2m+1}_{\tau})$.

 Since $\nablar_u a_j =\nablar_u ia_j =0$ for any tangent vector $u\in T\mathbb{S}^{2m+1}_{\tau}$,  we easily get from \eqref{eq:Levi-Civita-connection-Berger-round} and \eqref{eq:properties-immersion-xi-tangent-J-normal} that
 \[
   \nabla^\perp_u a_j = - \tfrac{1-\tau^2}{\tau} \bprodesc{u}{\xi}Ja_j,\quad \Delta^\perp a_j = - \tfrac{(1-\tau^2)^2}{\tau^2}a_j, \quad \nabla^\perp_u i a_j = i \nabla^\perp_u a_j, \quad \Delta^\perp ia_j = i \Delta^\perp a_j.
 \] 
 
 Hence, the Jacobi operator  is given by 
\begin{multline*}
    \mathcal{L} \eta = 
                                 \sum_{j=1}^{n-m}\left[ \Delta f_j + (2m + 2 - \tfrac{1}{\tau^2})f_j + 2 \tfrac{1-\tau^2}{\tau} L_\xi g_j\right] a_j \\+ 
                                \sum_{j=1}^{n-m}\left[ \Delta g_j + (2m + 2 - \tfrac{1}{\tau^2})g_j - 2 \tfrac{1-\tau^2}{\tau} L_\xi f_j\right]  ia_j,\quad \forall \,\eta=\sum_{j=1}^{n-m}\{f_ja_j+g_j \, ia_j\}
.
\end{multline*} 
As a consequence, if $\Gamma(\mathcal{D}_j)$ is the space of sections of the subbundle $\mathcal{D}_j$, then $\mathcal{L}(\Gamma(\mathcal{D}_j))\subset\Gamma(\mathcal{D}_j)$, and  so $\Ind(\mathcal{L}) = \sum_{j=1}^{n-m} \Ind(\mathcal{L}_j)$, where $\mathcal{L}_j$ is the restriction of $\mathcal{L}$ to $\Gamma(\mathcal{D}_j)$.

 We are going to compute the  index and nullity of $\mathcal{L}_j$ and prove  that these numbers are independent of the subbundles $\mathcal{D}_j$.  To simplify the notation, let $\mathcal{D}=\Span\{a,ia\}$ and $\mathcal{L}:\Gamma(\mathcal{D})\rightarrow\Gamma(\mathcal{D})$.

Let $\eta = f a + g (ia)$ be an eigensection of $\mathcal{L}$ associated to the eigenvalue $\rho$, i.e.\ $\mathcal{L} \eta + \rho \eta = 0$. Then, from the previous equation we get
\begin{equation}\label{eq:totally-geodesic-Berger-sphere-decomposition-Jacobi-operator-fa+gJa}
\begin{split}
  \Delta f + [\rho + (2m+2 - \tfrac{1}{\tau^2})]f &= -2 \tfrac{1-\tau^2}{\tau} L_\xi g,  \\
  \Delta g + [\rho + (2m+2 - \tfrac{1}{\tau^2})]g &= 2 \tfrac{1-\tau^2}{\tau} L_\xi f.
\end{split}
\end{equation}
From \autoref{lm:connection-Berger}.(4) we have that $\Delta\circ L_{\xi}=L_{\xi}\circ\Delta$, and so applying $L_{\xi}$ to both equations of ~\eqref{eq:totally-geodesic-Berger-sphere-decomposition-Jacobi-operator-fa+gJa} we obtain
\begin{equation}\label{eq:totally-geodesic-Berger-sphere-decomposition-Jacobi-operator-fa+gJa1}
\begin{split}
  \Delta L_{\xi}f + [\rho + (2m+2 - \tfrac{1}{\tau^2})]L_{\xi}f &= -2 \tfrac{1-\tau^2}{\tau} (L_\xi)^2 g,  \\
  \Delta L_{\xi}g + [\rho + (2m+2 - \tfrac{1}{\tau^2})]L_{\xi}g &= 2 \tfrac{1-\tau^2}{\tau} (L_\xi)^2 f.
\end{split}
\end{equation}

Let $f = \sum_{k,p} f_{k,p}$ and $g = \sum_{k} g_{k,p}$ be the decompositions of $f$ and $g$ in eigenfunctions of the Laplacian $\Delta$ of $\mathbb{S}^{2m+1}_\tau$ with $f_{k,p}, g_{k,p} \in V(\mu_{k,p})$ (see \autoref{lm:connection-Berger}). More precisely,
\[
  \Delta f_{k,p} +\mu_{k,p}f_{k,p}=0, \quad \Delta g_{k,p} +\mu_{k,p}g_{k,p}=0,\quad\mu_{k,p} = k(2m+k) + \tfrac{1-\tau^2}{\tau^2}(k-2p)^2,
\]
with $k$ and $p$ integer numbers satisfying $k \geq 0$ and $0 \leq p \leq \lfloor \tfrac{k}{2} \rfloor$.

As $(L_\xi)^2 f_{k,p} = -\tfrac{1}{\tau^2}(k -2p)^2 f_{k,p}$, 
from \eqref{eq:totally-geodesic-Berger-sphere-decomposition-Jacobi-operator-fa+gJa1}, it follows that 
\begin{equation}\label{eq:totally-geodesic-Berger-sphere-relation-eigenvalues}
\begin{split}
  [\bigl(-\mu_{k,p} + \rho + (2m+2 - \tfrac{1}{\tau^2})\bigr)^2 - 4 \tfrac{(1-\tau^2)^2}{\tau^4}(k - 2p)^2] f_{k,p} &= 0, \\
  [\bigl(-\mu_{k,p} + \rho + (2m+2 - \tfrac{1}{\tau^2})\bigr)^2 - 4 \tfrac{(1-\tau^2)^2}{\tau^4}(k - 2p)^2] g_{k,p} &= 0, \\
\end{split}
\end{equation}
for any $k \geq 0$ and $0 \leq p \leq \lfloor \tfrac{k}{2} \rfloor$, and so,   the eigenvalue $\rho$ of $\mathcal{L}$ takes the form
\begin{equation}\label{eq:totally-geodesic-Berger-spheres-eigenvalues-Jacobi-operator}
  \rho = (2m+1+k)(k-1) + \tfrac{1-\tau^2}{\tau^2}(k - 2p \pm 1)^2.
\end{equation} 
for some $k \geq 0$ and $0 \leq p \leq \lfloor \tfrac{k}{2} \rfloor$. Notice that $\rho > 0$ for $k \geq 2$ and so we only need to analyse the cases $k = 0$ and $k = 1$. In both cases we know by the decomposition in \autoref{lm:connection-Berger}.(4) that $V(\mu_{0,0}) = V(\lambda_0)$ and $V(\mu_{1,0}) = V(\lambda_1)$. 

On the one hand, if $k=0$, then $p=0$ and $\rho=\frac{1}{\tau^2}-(2m+2)$. Hence, in this case,  $\rho$ is positive if $\tau^2<\frac{1}{2m+2}$, $\rho=0$ if $\tau^2=\frac{1}{2m+2}$ and $\rho$ is negative if $\tau^2>\frac{1}{2m+2}$. Moreover, from \eqref{eq:totally-geodesic-Berger-sphere-relation-eigenvalues} and if $\tau^2 \neq \frac{1}{2m+2}$, we deduce that $f$ and $g$ are constant functions and so the eigensections associated to $\rho$ are of the form $\alpha a + \beta ia$, with $\alpha, \beta \in \mathbb{R}$. From here it follows that $\Ind(\mathcal{L})=0$ for $\tau^2\leq \frac{1}{2m+2}$. Also we obtain that, for $\tau^2 > \tfrac{1}{2m+2}$, $\Ind(\mathcal{L})$ is twice the multiplicity of $\mu_{0,0}$, which is one. So, in this case, $\Ind(\mathcal{L})=2$.

On the other hand, when $k=1$, we have that $p=0$ and so $\rho=0$. Now, by \eqref{eq:totally-geodesic-Berger-sphere-relation-eigenvalues} and assuming $\tau^2 \neq \frac{1}{2m+2}$, we get that $f$ and $g$ are eigenfunctions of the Laplacian associated to the eigenvalue $\mu_{1,0} = 2m+1 + \tfrac{1-\tau^2}{\tau^2}$. But, if $\tau^2 \neq 1$, using \eqref{eq:totally-geodesic-Berger-sphere-decomposition-Jacobi-operator-fa+gJa} we get that $g = -\tau L_\xi f$ and so the eigensections associated to $\rho$ are $f a - \tau (L_\xi f) ia$, with $f \in V(\mu_{1,0}) = V(\lambda_1)$. Finally, if $\tau^2 = \frac{1}{2m+2}$ then $\alpha a + \beta ia$, $\alpha, \beta \in \mathbb{R}$ are also eigensections associated to $\rho = 0$ by the analysis in the previous paragraph. We deduce that, if $\tau^2\not=\frac{1}{2m+2}$ and $\tau^2 \neq 1$, $\Nul(\mathcal{L})$ coincides with the multiplicity of $\mu_{1,0}$  that is, $\Nul(\mathcal{L})=(2m+2)$. When $\tau^2=\frac{1}{2m+2}$, then $\Nul(\mathcal{L})$ is increased by $2$ and so, in this case, $\Nul(\mathcal{L})=2m+4$. If $\tau^2 = 1$ then the normal section $\eta = fa + g ia$ with $f, g \in V(\lambda_1)$ satisfies $\mathcal{L} \eta = 0$ and so the nullity in this case is $2(2m+2)$ (see also \cite[Proposition 5.1.1]{Simons68}).

We finally observe that the index and the nullity of $\mathcal{L}$ is the same for any subbundle $\mathcal{D}_j$, and so the result follows.

  We finally analyse the immersion $\Phi_s: \mathbb{S}^1 \to \mathbb{S}^{2n+1}_\tau$, $\Phi_s(z) = (z^s,\ldots,0)$. Notice that the Laplacian $\Delta$ of the induced metric by $\Phi_s$ is $\Delta = (L_\xi)^2$ and if $f$ is an eigenfunction associated to the $k$-th eigenvalue of the Laplacian then $\Delta f = (L_\xi)^2 f = -\frac{k^2}{s^2\tau^2}f$. Following the previous arguments we easily get that the eigenvalues $\rho$ of $\mathcal{L}$ take the form (cp.~\eqref{eq:totally-geodesic-Berger-spheres-eigenvalues-Jacobi-operator})
  \[
    \rho_\pm(k)= (\tfrac{k^2}{s^2} - 1) + \tfrac{1-\tau^2}{\tau^2}(\tfrac{k}{s} \pm 1)^2.
  \] 
  Now, it is clear that $\rho_\pm(k) \geq 0$ for $k \geq s$ and, if we assume $\tau^2 \leq \frac{1}{2s}$ it is not difficult to show that $\rho_\pm(k)$ is also non-negative for $0 \leq k \leq s-1$. Hence, the immersion $\Phi_s$ is stable for $\tau^2 \leq 1/2s$. Moreover, by a direct computation
  \[
    \rho_-(s-1) = \tfrac{1}{s^2}(\tfrac{1}{\tau^2} - 2s),
  \] 
  with associated eigensection $f a + \tfrac{\tau s}{s-1} (L_\xi f) ia$ if $s \neq 1$ or $a$ and $ia$ if $s = 1$, where $f$ is any eigenfunction of $\Delta$ associated to the $(s-1)$-th eigenvalue. Hence, $\Phi_s$ is unstable if $\tau^2 > \frac{1}{2s}$ and the proof finishes.
\end{proof}

\begin{proposition}\label{prop:index-nullity-Veronese}
  Let $\Phi_0: \mathbb{RP}^3 \to \mathbb{S}^{5}_\tau$ be the minimal embedding given in \autoref{ex:examples-induced-bundle-by-Hopf-fibration}.(\ref{ex:Veronese}). Then:
  \[
    \Ind(\mathbb{RP}^3) = 
  \begin{cases}
    8 & \text{if } \frac{1}{2} < \tau^2 \leq 1, \\
    6 & \text{if } \frac{1}{4} < \tau^2 \leq \frac{1}{2} \\
    0 & \text{if } \tau^2 \leq \frac{1}{4}. \\
  \end{cases},
  \qquad
  \Nul(\mathbb{RP}^3) = 
  \begin{cases}
    16 & \text{if } \tau^2 = 1 \text{ or } \tau^2 = \frac{1}{4}, \\
    12 & \text{if } \tau^2 = \frac{1}{2}, \\
    10 & \text{otherwise}.
  \end{cases}
  \] 

  Moreover, if $\Phi: (\mathbb{S}^3, 2 \bprodesc{\cdot}{\cdot}_{\sqrt{2}\tau}) \to \mathbb{S}^5_{\tau}$ is the minimal isometric immersion given in \autoref{ex:examples-induced-bundle-by-Hopf-fibration}.(\ref{ex:Veronese}), then 
  \[
    \Ind(\mathbb{S}^3) = 
    \begin{cases}
      \geq 14 & \text{if } \frac{1}{4} < \tau^2 \leq 1, \\
      8 & \text{if } \frac{1}{8} < \tau^2 \leq \frac{1}{4} \\
      0 & \text{if } \tau^2 \leq \frac{1}{8}. \\
    \end{cases},
    \quad
    \Nul(\mathbb{S}^3) = 
    \begin{cases}
      16 & \text{if } \tau^2 = 1 \text{ or } \tau^2 = \frac{1}{4}, \\
      18 & \text{if } \tau^2 = \frac{1}{8}, \\
      \geq 10 & \text{otherwise.}
    \end{cases}
  \] 
 
\end{proposition}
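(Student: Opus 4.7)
The plan is to mirror the structure of Proposition \ref{prop:index-nullity-totally-geodesic-Berger-spheres}: reduce $\mathcal{L}$ on the rank-$2$ $J$-invariant normal bundle to a scalar eigenvalue problem, and then use Tanno's decomposition from Lemma \ref{lm:connection-Berger}(4) of the Berger Laplacian to enumerate the eigenvalues.

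First I would apply \eqref{eq:Jacobi-operator-family-xi-tangent} with $m=1$, so that the Jacobi operator reads $\mathcal{L}\eta=\Delta^\perp\eta+\mathcal{A}\eta+(2+\tau^2)\eta$ on $\mathfrak{X}^\perp(\mathbb{RP}^3)$. The Weingarten term $\mathcal{A}$ comes from the second fundamental form of $\Phi_0$ which, via the commutative diagram \eqref{eq:diagram} and the fact that the Veronese $\Psi:\mathbb{CP}^1(2)\to\mathbb{CP}^2(4)$ has parallel second fundamental form, is a parallel scalar multiple of the identity on the normal bundle after restriction to the horizontal distribution $\mathcal{D}$; using $\sigma(\xi,\cdot)=0$ from \eqref{eq:properties-covariant-derivative-2ff-immersion-xi-tangent-J-normal}, one finds $\mathcal{A}=c\cdot\mathrm{Id}$ for an explicit constant $c=c(\tau)$.

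Next, since the normal bundle is a complex line bundle globally trivializable by a unit section $a$, I would decompose $\eta=fa+g(ia)$ and, using \eqref{eq:properties-immersion-xi-tangent-J-normal} together with an explicit computation of $\nabla^\perp_u a$, write $\mathcal{L}$ as a coupled matrix system in $f,g$ of the same shape as \eqref{eq:totally-geodesic-Berger-sphere-decomposition-Jacobi-operator-fa+gJa}, but with coefficients adjusted by $c$ and by a normal-curvature correction coming from the non-trivial line bundle. Substituting the Tanno decomposition from Lemma \ref{lm:connection-Berger}(4) applied to the Berger metric $2\bprodesc{\cdot}{\cdot}_{\sqrt{2}\tau}$ on $\mathbb{S}^3$ (restricted to $\mathbb{Z}/2$-invariant modes to descend to $\mathbb{RP}^3$) yields a family of eigenvalues $\rho_\pm(k,p)$ analogous to \eqref{eq:totally-geodesic-Berger-spheres-eigenvalues-Jacobi-operator}.

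Then I would enumerate which $\rho_\pm(k,p)$ are $\le 0$ and identify the transitions. The threshold $\tau^2=1/4=1/(2(m+1))$ arises from the $k=0$ modes exactly as in Proposition \ref{prop:index-nullity-totally-geodesic-Berger-spheres}(i), while $\tau^2=1/2$ and the non-trivial multiplicities emerge from $k=1,2$. At the critical values of $\tau$, additional eigensections satisfy $\mathcal{L}\eta=0$, producing the nullity jumps at $\tau^2=1/4$ and $\tau^2=1/2$, together with the additional jump to $16$ at $\tau=1$ (where ambient Killing vector fields on $\mathbb{S}^5$ contribute as in the classical Simons calculation \cite{Simons68}). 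For the lift $\Phi:\mathbb{S}^3\to\mathbb{S}^5_\tau$, sections of $\tilde\pi^*T^\perp M_0$ split according to the deck transformation $\mathbb{Z}/2$-action; the invariant part reproduces $\Ind(\mathbb{RP}^3)$ and the anti-invariant part, which now includes odd $k$ modes, adds the extra thresholds, one of which is at $\tau^2=1/8$. The inequality $\Ind(\mathbb{S}^3)\ge 14$ reflects that one produces a clean lower bound via explicit anti-invariant eigensections without excluding further unstable directions in the full range.

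The main obstacle is pinning down the exact formula for $\Delta^\perp a$ for the Veronese: unlike the totally geodesic case where $\nablar a_j=0$ gives a clean expression, here one must carry through the normal curvature of the Veronese line bundle (of degree $4$ over $\mathbb{CP}^1$), which requires explicit local-coordinate computation on $\mathbb{CP}^1(2)$. Once this term is in hand, the remaining analysis is a careful but routine counting exercise exactly parallel to Proposition \ref{prop:index-nullity-totally-geodesic-Berger-spheres}.
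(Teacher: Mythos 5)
Your overall strategy coincides with the paper's: trivialize the rank-two $J$-invariant normal bundle by an explicit global unit section (the paper uses $N_{(z,w)}=(\bar w^2,-\sqrt{2}\,\bar z\bar w,\bar z^2)$), write $\mathcal{L}$ as a coupled system in the two component functions, and feed in Tanno's decomposition from \autoref{lm:connection-Berger}.(4) to enumerate eigenvalues $\rho_\pm(k,p)$; the paper simply runs the computation on $\mathbb{S}^3$ and recovers the $\mathbb{RP}^3$ spectrum as the even-$k$ part, the mirror image of your invariant/anti-invariant splitting.

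There is, however, a concrete error in your description of how the enumeration comes out, and it is exactly at the point you defer as ``routine.'' The zeroth-order coefficient of the Jacobi system for the Veronese is $4\bigl(2-\tfrac{1}{\tau^2}\bigr)$, not $2m+2-\tfrac{1}{\tau^2}$ as in the totally geodesic case: the term $\mathcal{A}$ is nonzero and $\Delta^\perp N$ contributes differently because $N$ is not parallel in $\mathbb{C}^3$. Consequently the $k=0$ mode gives $\rho(0,0)=\tfrac{4}{\tau^2}-8$, whose sign changes at $\tau^2=\tfrac12$ (this accounts for the index jumping from $6$ to $8$, with multiplicity $2$), \emph{not} at $\tau^2=\tfrac14$ as you assert by analogy with \autoref{prop:index-nullity-totally-geodesic-Berger-spheres}. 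The threshold $\tau^2=\tfrac14$ comes instead from $\rho_-(2,0)=\tfrac{1}{\tau^2}-4$ (multiplicity $6$), and the extra threshold $\tau^2=\tfrac18$ for the lift to $\mathbb{S}^3$ comes from $\rho_-(3,0)=\tfrac{1}{4\tau^2}-2$, an odd-$k$ mode as you anticipated. So your plan is methodologically the same as the paper's and would succeed once the explicit computation of $\nabla^\perp N$ is carried out, but the claimed attribution of thresholds to modes is wrong, precisely because the computation is not as parallel to the totally geodesic case as you assume.
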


\begin{proof}
  Let consider the global orthonormal reference of the normal bundle of $\Phi$ given by $\{N, iN\}$ where $N_{(z,w)} = (\bar{w}^2, -\sqrt{2} \bar{z} \bar{w}, \bar{z}^2)$.  Then, given an arbitrary normal section $\eta = fN + g iN$, $f, g \in \mathcal{C}^\infty(\mathbb{S}^3)$,  straightforward computations shows that the Jacobi operator $\mathcal{L}$ (see \eqref{eq:Jacobi-operator-family-xi-tangent}) satisfies  
  \[
  \mathcal{L} \eta = \left[ \Delta f + 4\left(2 - \tfrac{1}{\tau^2}\right) f + 2 \tfrac{(2-\tau^2)}{\tau} L_\xi g \right] N + \left[ \Delta g + 4\left(2 - \tfrac{1}{\tau^2}\right)g  - 2 \tfrac{(2-\tau^2)}{\tau} L_\xi f \right] iN.
\]
 
Decomposing $f = \sum f_{k,p}$ and $g = \sum g_{k,p}$, where $f_{k,p}$ and $g_{k,p}$ are eigenfunction of the Laplacian associated to the eigenvalue $\mu_{k,p}$ (see \autoref{lm:connection-Berger}.(4)), and following a simular argument as in the proof of \autoref{prop:index-nullity-totally-geodesic-Berger-spheres} we can deduce that the eigenvalues of $\mathcal{L}$ take the form
  \[
    \rho_\pm(k,p) = \frac{1}{2} (1 + k(2+k)) + \frac{1}{4\tau^2}(k - 2p \pm 4)^2 - 8 - \frac{1}{2}(k - 2p \pm 1)^2.
  \] 
  for certain integers $k \geq 0$ and $0 \leq p \leq \lfloor \frac{k}{2}\rfloor$. Moreover, the associated eigensections to $\rho_\pm(k,p)$ are
\[
  f_{k,p} N \pm \tfrac{2\tau}{k - 2p} L_\xi f_{k, p} iN \quad\text{if } k - 2p \neq 0, \quad\text{or}\quad f_{k,p}N \text{ and } f_{k,p}iN  \quad \text{if } k = 2p.
\] 
Hence, the multiplicity associated to $\rho_\pm(k,p)$ is
\[
  \dim V(\mu_{k,p}) \text{ if } k \neq 2p, \quad \text{or}\quad 2\cdot \dim V(\mu_{k,k / 2}) \text{ if } k = 2p.
\] 

The result about the index and the nullity of $\Phi$ follows by a careful analysis of the sign of $\rho_\pm(k,p)$.

 In the case of the embedding $\Phi_0$, it is clear that $N$ and $iN$ project on $\mathbb{RP}^3$ and they give also a global orthonormal reference of the normal bundle of $\Phi_0$. Hence, the eigenvalues of its Jacobi operator are those of the Jacobi operator of $\Phi$ with $k$ even. From here the result for the index and nullity of $\Phi_0$ follows easily.
\end{proof}

\begin{proposition}\label{prop:index-nullity-totally-geodesic-spheres}
	Let $\Phi:\mathbb{S}^d\rightarrow \mathbb{S}^{2n+1}_\tau$ be the totally geodesic embedding given in \autoref{prop:classification-totally-geodesic}.(ii) . Then:
	\begin{enumerate}[(\scshape i)]
		\item $\Ind(\mathbb{S}^d) =  \begin{cases}
			2n+1+\frac{d(d-1)}{2} & \text{if } \tau^2 < 1, \\
			2n+1-d & \text{if } \tau^2 = 1.
		\end{cases}$
		
		\item $\Nul(\mathbb{S}^d) =  \begin{cases}
			(d+1)(2n+1-\frac{3d}{2}) & \text{if } \tau^2 < 1, \\
			(d+1)(2n+1-d) & \text{if } \tau^2 = 1.
		\end{cases}$
	\end{enumerate}
\end{proposition}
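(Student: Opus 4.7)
Since $\Phi$ is totally geodesic we have $\sigma=0$ and $\mathcal{A}=0$, and by \autoref{prop:killing-tangente-normal}.(ii) $\xi^\top=0$ with $J(TM)\subset T^\perp M$; hence the Jacobi operator \eqref{eq:Jacobi-operator} reduces to $\mathcal{L}\eta=\Delta^\perp\eta+d\eta-d(1-\tau^2)\bprodesc{\eta}{\xi}\xi-3(1-\tau^2)[J(J\eta)^\top]^\perp$. The plan is to split the normal bundle $\bprodesc{\cdot}{\cdot}$-orthogonally as $T^\perp M=L\oplus L^J\oplus L^\perp$, where $L=\Span(\xi)$, $L^J=J(TM)$, and $L^\perp$ corresponds to the complex-linear complement $\mathbb{C}^{n-d}\subset\mathbb{C}^{n+1}$ formed by the last $n-d$ complex coordinates (rank $2(n-d)$), and to analyse $\mathcal{L}$ on each summand. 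On $L^\perp$ the global frame $\{a_\ell,ia_\ell\}_{\ell=d+2}^{n+1}$ of constant ambient vectors lies in the complex subspace orthogonal to $\mathbb{R}^{d+1}\supset\mathbb{S}^d$; this forces $\bprodesc{a_\ell}{\xi}=\bprodesc{ia_\ell}{\xi}=0$ and $(Ja_\ell)^\top=(Jia_\ell)^\top=0$ on $\mathbb{S}^d$, and by \autoref{lm:connection-Berger}.(1) the frame is Berger-parallel along $\Phi$. Therefore $\mathcal{L}|_{L^\perp}$ reduces to the scalar Schr\"odinger operator $\Delta+d$ on $\mathbb{S}^d$, whose eigenvalues $d-\lambda_k$ with $\lambda_k=k(k+d-1)$ contribute $2(n-d)$ positive eigenvalues (from $k=0$) and $2(d+1)(n-d)$ zero eigenvalues (from $k=1$), independent of $\tau$.

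The delicate work is on $L\oplus L^J$. I parametrize any section as $\eta=f\xi+JY$ with $f\in C^\infty(\mathbb{S}^d)$ and $Y\in\mathfrak{X}(\mathbb{S}^d)$. Combining $\nablab_X\xi=\tau JX$ from \eqref{eq:covariant-derivative-xi} with the identity $\nablab_X(JY)=J\nabla_X Y-\tau\bprodesc{X}{Y}\xi$ for $X,Y\in TM$ (derived by differentiating $iY$ in $\mathbb{R}^{2n+2}$ and applying \autoref{lm:connection-Berger}.(1)), a direct calculation of the three normal Jacobi terms gives
\[
  \mathcal{L}(f\xi+JY)=\bigl[\Delta f-2\tau\diver Y\bigr]\xi+J\bigl[\Delta_R Y+2\tau\nabla f+(d+3-4\tau^2)Y\bigr],
\]
where $\Delta_R$ denotes the rough Laplacian on $\mathfrak{X}(\mathbb{S}^d)$. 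I then diagonalize via Hodge: write $Y=\nabla g+Z$ with $\diver Z=0$. For the \emph{gradient part} with $f,g\in V(\lambda_k)$, the Bochner identity $\Delta_R\nabla g=(d-1-\lambda_k)\nabla g$ reduces $\mathcal{L}\eta=\mu\eta$ to the characteristic equation $(\mu+\lambda_k)(\mu+\lambda_k+4\tau^2-2d-2)=4\tau^2\lambda_k$, whose roots are
\[
  \mu_\pm(k)=(d+1-2\tau^2)-\lambda_k\pm\sqrt{(d+1-2\tau^2)^2+4\tau^2\lambda_k}.
\]
The decisive algebraic observation is that at $\lambda_2=2(d+1)$ the radicand collapses to the perfect square $(d+1+2\tau^2)^2$, so $\mu_+(2)=0$ identically in $\tau$ with multiplicity $m_2=d(d+3)/2$ (the dimension of degree-$2$ spherical harmonics on $\mathbb{S}^d$). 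Elementary sign analysis yields: $k=0$ contributes only the Jacobi field $\xi$ (multiplicity $1$, $\mu=0$); $\mu_+(1)>0$ and $\mu_-(1)<0$, each with multiplicity $m_1=d+1$; and $\mu_\pm(k)<0$ for $k\geq 3$. For the \emph{divergence-free part}, taking $\diver$ of the $J$-component and using $\diver\Delta_R Z=0$ (valid on Einstein manifolds) forces $f$ to be constant, leaving $\Delta_R Z=(\mu-d-3+4\tau^2)Z$. Lichnerowicz's theorem on $\mathbb{S}^d$ states that the largest eigenvalue of $\Delta_R$ on divergence-free $1$-forms is exactly $-(d-1)$, attained only by the $d(d+1)/2$-dimensional Killing subspace, with the next eigenvalue $-(2d+1)$; hence Killing vectors produce $\mu=4(1-\tau^2)$, while every other divergence-free mode gives $\mu\leq-(d-2)-4\tau^2<0$ for $d\geq 2$ (and on $\mathbb{S}^1$ no higher divergence-free modes exist).

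Summing all contributions: for $\tau^2<1$ the Killing eigenvalue $4(1-\tau^2)>0$ contributes to the index, yielding $\Ind=2(n-d)+(d+1)+d(d+1)/2=2n+1+d(d-1)/2$ and $\Nul=2(d+1)(n-d)+1+d(d+3)/2=(d+1)(2n+1-3d/2)$; at $\tau^2=1$ the Killing eigenvalue drops to zero, so its $d(d+1)/2$ eigensections move from index to nullity, recovering Simons' classical values $\Ind=2n+1-d$ and $\Nul=(d+1)(2n+1-d)$. The main obstacle is the explicit derivation of the Jacobi system on $L\oplus L^J$ together with the identification of the perfect-square factorization at $k=2$, both of which are the crucial algebraic features making the count work; the spectral input for higher divergence-free modes follows from standard Hodge/Lichnerowicz theory on Einstein manifolds.
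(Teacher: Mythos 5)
Your proposal is correct and follows essentially the same route as the paper: the orthogonal splitting $T^\perp\mathbb{S}^d=J(T\mathbb{S}^d)\oplus\langle\xi\rangle\oplus\mathcal{D}$, the reduction to the scalar operator $\Delta+d$ on the parallel frame of $\mathcal{D}$, the computation $\mathcal{L}(JY+f\xi)=J[\Delta Y+(d+3-4\tau^2)Y+2\tau\nabla f]+[\Delta f-2\tau\diver Y]\xi$, the Hodge splitting of $Y$ into exact and divergence-free parts, the quadratic characteristic equation whose non-negative root vanishes exactly at $\lambda_2=2(d+1)$, and the Killing/coexact eigenvalue $4(1-\tau^2)$ are all the paper's steps (up to the harmless sign convention $\mu=-\rho$), and the final counts agree. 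The only cosmetic difference is that you phrase the divergence-free block via the rough Laplacian and Lichnerowicz's theorem on $\mathbb{S}^d$, where the paper passes to $2$-forms and cites Iwasaki--Katase for the spectrum of the Hodge Laplacian.
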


\begin{proof}
  The result for $\tau^2 = 1$ is well-known, see \cite{Simons68}, so we will assume that $\tau^2 < 1$.

  Let $\mathbb{S}^d$, $d \leq n$, be the totally geodesic sphere given in \autoref{prop:classification-totally-geodesic}.(ii). We recall that $\xi^\top = 0$ and $\mathbb{S}^d$ is totally real in $\mathbb{CP}^{n+1}(4(1-\tau^2))$. Then, its normal bundle can be orthogonally decomposed as 
  \[
    T^\perp \mathbb{S}^d = J(T\mathbb{S}^d) \oplus \langle \xi \rangle \oplus \mathcal{D}.
  \]
  Firstly, it is clear that $\{a_j\colon 1\leq j\leq 2(n-d)\}$ defined by
  \[
  a_j=(0,\dots,0,\stackrel{2(d+1)+j}{1},0,\dots,0)\in\mathbb{R}^{2n+2}
  \]
  is a global orthonormal reference of the bundle $\mathcal{D}$. Hence any section of the bundle $\mathcal{D}$ can be written as $\eta=\sum_{j=1}^{2(n-d)}f_j a_j$ with $f_j\in \mathcal{C}^{\infty}(\mathbb{S}^d)$.

  Now, since $\nablar_u a_j = 0$ and $a_j$  is orthogonal to $\xi$, we get from \eqref{eq:Levi-Civita-connection-Berger-round} that $\nabla^\perp_u a_j = 0$. As a consequence $\Delta^\perp a_j  = 0$ and so, using \eqref{eq:Jacobi-operator},
  \[
    \mathcal{L}\eta = \sum_{j = 1}^{2(n-d)}  (\Delta f_j + d f_j)a_j.
  \] 
  If $\Gamma(\mathcal{D})$ is the space of sections of the subbundle $\mathcal{D}$, then $\mathcal{L}(\Gamma(\mathcal{D}))\subset\Gamma(\mathcal{D})$.
  Moreover, since the first eigenvalues of $\Delta$ are $0$ and $d$ with multiplicities $1$ and $d+1$ (see   \autoref{lm:connection-Berger}.(4)) we get that $\Ind(\mathcal{L}\lvert_{\Gamma(\mathcal{D})}) = 2(n-d)$ and $\Nul(\mathcal{L}\lvert_{\Gamma(\mathcal{D})}) = 2(n-d)(d+1)$.
  
  Secondly, given $X \in \mathfrak{X}(\mathbb{S}^d)$ then $JX$ is normal and, if $\nablap$ is the Levi-Civita connection of $\mathbb{CP}^{n+1}(4(1-\tau^2))$, we have that $\nablap_uJX=J\nablap_uX$ for any tangent vector $u\in T\mathbb{S}^d$. Taking normal components in this equation, using Gauss and Weingarten formulae and~\eqref{eq:2ff-Berger-complex-space}, we get that
  \begin{equation}\label{eq:totally-geodesic-round-sphere-nabla-JX}
    \nabla^\perp_u JX = J(\nabla_u X) +J\hat{\sigma}(u,X)=J(\nabla_u X)-\tau \bprodesc{u}{X}\xi.
  \end{equation} 
 Now, thanks to \eqref{eq:totally-geodesic-round-sphere-nabla-JX} we easily get that
 \[
   \Delta^\perp JX = J (\Delta X - \tau^2 X) - 2\tau (\diver X) \xi.
 \] 
 Using this equation and  \eqref{eq:Jacobi-operator}, we get that, for any $f \in \mathcal{C}^{\infty}(\mathbb{S}^d)$,
 \begin{equation}\label{eq:Jacobi-operator-totallyreal}
   \mathcal{L}(JX + f\xi) = J[\Delta X + (d + 3 - 4\tau^2) X + 2\tau \nabla f] + [\Delta f - 2\tau \diver X]\xi,
 \end{equation}
 where we have used that $\xi$ is a Jacobi field, i.e., $\mathcal{L}\xi=0$.
 
 Now, we consider the identification $\Gamma(JT\mathbb{S}^d\oplus\langle\xi\rangle)\equiv \Omega^1(\mathbb{S}^d)\oplus \mathcal{C}^{\infty}(\mathbb{S}^d)$ given by
 \[
 JX+f\xi\equiv (\alpha,f),
 \]
 where $\alpha$ is the $1$-form on $\mathbb{S}^d$ given by $\alpha(Y)=\langle X,Y\rangle$, for any $Y\in\mathfrak{X}(\mathbb{S}^d)$ and $\Omega^p(\mathbb{S}^d)$ denotes the space of $p$-forms on $\mathbb{S}^d$. If $\hat{\Delta}$ represents the Hodge Laplacian acting on forms of $\mathbb{S}^d$, then the vector field $\Delta X-\Ric(X)=\Delta X-(d-1)X$ corresponds with the $1$-form $\hat{\Delta}\alpha$, and so the Jacobi operator $\mathcal{L}$ acting on the sections of the bundle $JT\mathbb{S}^d\oplus\langle\xi\rangle$ computed in \eqref{eq:Jacobi-operator-totallyreal} becomes in an operator $L:\Omega^1(\mathbb{S}^d)\oplus \mathcal{C}^{\infty}(\mathbb{S}^d)\rightarrow\Omega^1(\mathbb{S}^d)\oplus \mathcal{C}^{\infty}(\mathbb{S}^d)$ given by
 \[
 L(\alpha,f)=(\hat{\Delta}\alpha+2(d+1-2\tau^2)\alpha +2\tau \mathrm{d}f, \Delta f-2\tau\delta\alpha),
 \]
 where $\mathrm{d}$ is the differential and $\delta$ is the codifferential operator on $\mathbb{S}^d$ given by $\delta \alpha = \sum_{i = 1}^{d} (\nabla_{e_i} \alpha)(e_i)$ being $\{e_1,\ldots,e_d\}$ a local orthonormal reference on $\mathbb{S}^d$. It is clear that $\Ind (L)$ and $\Nul(L)$ are	 the index and the nullity of the Jacobi operator $\mathcal{L}$ acting on $\Gamma(JT\mathbb{S}^d\oplus\langle\xi\rangle)$.

Now, as the first Betti number of $\mathbb{S}^d$ is $0$, the Hodge decomposition theorem says that
\[
\Omega^1(\mathbb{S}^d)=\mathrm{d}\mathcal{C}^{\infty}(\mathbb{S}^d)\oplus\delta\Omega^2(\mathbb{S}^d),
\]
which allows to write in a unique way any $1$-form $\alpha$ as $\alpha=\mathrm{d}g+\delta\omega$, with $g\in \mathcal{C}^{\infty}(\mathbb{S}^d)$ and $\omega\in\Omega^2(\mathbb{S}^d)$. Now we can split the operator $L$ as $L=L_1\oplus L_2$ where
\begin{equation}\label{eq:index-totally-geodesic-sphere-L1}
\begin{gathered}
L_1: \mathrm{d} \mathcal{C}^{\infty}(\mathbb{S}^d)\oplus \mathcal{C}^{\infty}(\mathbb{S}^d)\rightarrow \mathrm{d}\mathcal{C}^{\infty}(\mathbb{S}^d)\oplus \mathcal{C}^{\infty}(\mathbb{S}^d)\\
L_1(\mathrm{d}g,f)=\bigl(\mathrm{d}(\Delta g+2(d+1-2\tau^2)g+2\tau f)), \Delta (f-2\tau g)\bigr),
\end{gathered}
\end{equation}
and 
\begin{equation}\label{eq:index-totally-geodesic-sphere-L2}
\begin{gathered}
L_2:\delta\Omega^2(\mathbb{S}^d)\rightarrow\delta\Omega^2(\mathbb{S}^d)\\
L_2(\delta\omega,0)=\bigl(\delta(\hat{\Delta}\omega+2(d+1-2\tau^2)\omega)),0\bigr).
\end{gathered}
\end{equation}
It is clear that $\Ind(L) = \Ind(L_1) + \Ind(L_2)$ and $\Nul(L) = \Nul(L_1) + \Nul(L_2)$. We now compute the index and nullity of both operators.

Let $\rho \leq 0$ be an eigenvalue of $L_1$ with associated eigenfunction $(\mathrm{d}g, f)$. Then, from the equality $L_1(\mathrm{d}g, f) + \rho(\mathrm{d}g, f) = 0$ and the expression of $L_1$ in \eqref{eq:index-totally-geodesic-sphere-L1} we deduce that
\begin{equation}\label{eq:index-totally-geodesic-sphere-L1-decomposition}
\begin{split}
  0 &= \mathrm{d}[\Delta g + \bigl(\rho + 2(d + 1 - 2\tau^2)\bigr)g + 2\tau f], \\
0 &= \Delta f - 2\tau \Delta g + \rho f.
\end{split}
\end{equation}
Notice that if $\mathrm{d}g = 0$ then from the first equation $f$ is constant and so, from the second one, $\rho = 0$. Hence, $(0, a)$, $a \in \mathbb{R}$, is in the nullity of $L_1$. 

If $\mathrm{d}g \neq 0$, we decompose $f = \sum_{k \geq 0} f_k$ and $g = \sum_{k \geq 0} g_k$ in eigenfunctions of the Laplacian $\Delta$ of $\mathbb{S}^d$, so $\Delta f_k + \lambda_k f_k = 0$ and $\Delta g_k + \lambda_k g_k = 0$ for all $k \geq 0$ where $\lambda_k = k(d+k-1)$ are the eigenvalues of the Laplacian on $\mathbb{S}^d$. Notice that $f_0$ and $g_0$ are constant functions. Hence, \eqref{eq:index-totally-geodesic-sphere-L1-decomposition} now reads
\begin{equation}\label{eq:index-totally-geodesic-sphere-L1-fk-gk}
\begin{split}
  0 & =[\rho + 2(d+1-2\tau^2) - \lambda_k] \mathrm{d}g_k + 2\tau \mathrm{d} f_k, \\ 
  0 &= 2\tau \lambda_k g_k + (\rho - \lambda_k)f_k,
\end{split}
\qquad k \geq 0.
\end{equation}
We now substitute $f_k$ from the second equation in the first one to obtain
\begin{equation}\label{eq:index-totally-geodesic-sphere-L1-dgk}
  \left[ \rho + 2(d + 1 - 2\tau^2)  - \lambda_k + \tfrac{4\tau^2 \lambda_k}{\lambda_k - \rho}\right] \mathrm{d}g_k = 0, \qquad k \geq 1.
\end{equation} 
And so, since we have assumed that $\mathrm{d}g \neq 0$, the eigenvalue $\rho \leq 0$ takes the form
\[
  \rho = \lambda_k - (d+1-2\tau^2) - \sqrt{(d+1-2\tau^2)^2 + 4\tau^2 \lambda_k}.
\] 
for some $k \geq 1$.  Moreover, its associated eigenfunction is $\bigl(\mathrm{d}g, \tfrac{2\tau \lambda_k}{\lambda_k - \rho_k} g\bigr)$, where $g$ is an eigenfunction of the Laplacian associated to $\lambda_k$. 

Now, $\rho < 0$ if and only if $\lambda_k < 2(d+1)$ which only occurs if $k= 1$. As a consequence $\Ind(L_1) = d+1$. If $\rho = 0$ then we get from~\eqref{eq:index-totally-geodesic-sphere-L1-fk-gk}
\[
  f_k = 2\tau g_k,\qquad [2(d+1) - \lambda_k] \mathrm{d}g_k = 0, \qquad k \geq 1.
\] 
so, since $\mathrm{d}g \neq 0$, we get that $g = g_2$ is an eigenfunction of the Laplacian associated to $\lambda_2 = 2(d+1)$ and $f = 2\tau g$. Hence, we obtain $\Nul(L_1) = \frac{1}{2}(d+2)(d+1)$ which is exactly the multiplicity of the eigenvalue $\lambda_2$ plus $1$ since we have previously analysed that $(0,a)$, $a \in \mathbb{R}$, are also in the nullity of $L_1$ and corresponds to the case $\mathrm{d}g = 0$.

Finally, we compute the index and nullity of $L_2$. From~\cite{IK1979}, it follows that only the eigenvalue $2(d-1)$ of $\hat{\Delta}$ acting on $\Omega^2(\mathbb{S}^d)$ provides a non-positive eigenvalue of $L_2$,  which is $-4(1-\tau^2)$. As its multiplicity is $\frac{1}{2}d(d+1)$, we obtain that $\Ind(L_2)=\frac{1}{2}d(d+1)$ and $\Nul(L_2)=0$.

From the previous analysis of the index and nullity of $\mathcal{L}\lvert_{\Gamma(\mathcal{D})}$, $L_1$ and $L_2$ we get the result.
\end{proof}

\begin{proposition}\label{prop:Clifford-hypersurfaces-index-nullity}
  Let $\Phi: \mathbb{S}^{2m_1+1}(r_1) \times \mathbb{S}^{2m_2+1}(r_2) \to \mathbb{S}^{2n+1}_\tau$, with $m_1+m_2+1 = n$, $r_1^2=\frac{2m_1+1}{2n}$, $r_2^2=\frac{2m_2+1}{2n}$, be a Clifford hypersurface described in \autoref{ex:Clifford-hypersurfaces}. Then 
 \begin{enumerate}[(i)]
   \item $\Ind(\Phi) = \begin{cases}
       1 & \text{if } \tau^2 \leq \frac{1}{2n+1},\\
        2n+3 & \text{if } \frac{1}{2n+1} < \tau^2 \leq 1.
   \end{cases}$

 \item $\Nul(\Phi) = \begin{cases}
     2(m_1+1)(m_2+1) & \text{if } \tau^2 \neq \frac{1}{2n+1}, \\
     2(m_1+1)(m_2+1) + 2(n+1) & \text{if } \tau^2 = \frac{1}{2n+1}.
 \end{cases}$
 \end{enumerate}
\end{proposition}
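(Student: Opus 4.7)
I would reduce the problem to a direct spectral computation on $M$, following the template of the preceding index/nullity propositions. Since $\xi^{\perp}=0$ on the Clifford hypersurface (so $\nu=0$) and $|\sigma|^2 = 2(m_1+m_2+\tau^2)$ by \autoref{ex:Clifford-hypersurfaces}, substituting into \eqref{eq:Jacobi-operator-hypersurfaces} and using $n=m_1+m_2+1$ makes the $\tau$-dependence of the potential cancel entirely, leaving
\[
  \mathcal{L}f = \Delta f + 4n\,f,
\]
where $\Delta$ is the Laplacian of the induced Berger metric $\bar g$ on $M$. In the convention $\mathcal{L}\eta+\rho\eta=0$ used throughout the paper, $\Ind(\Phi)$ and $\Nul(\Phi)$ are therefore the number of $\Delta$-eigenvalues $\mu$ with $\mu<4n$, respectively $\mu=4n$, counted with real multiplicity.

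To compute the spectrum of $\Delta$, I would adapt the Tanno-type analysis of \autoref{lm:connection-Berger}.(3)--(4) to the product. The induced metric $\bar g$ differs from the product round metric $g_0$ on $M$ only by the usual $(1-\tau^2)$-deformation along the unit Killing field $iz = iz_1 + iz_2\in TM$, so
\[
  \Delta = \Delta_0 + (1-\tau^2)(L_\xi)^2, \qquad \xi = iz/\tau,
\]
and the circle generators $L_{iz_1}$, $L_{iz_2}$ are Killing, commute with each other, and hence commute with $\Delta$. A simultaneous diagonalisation, obtained by tensoring the Tanno decomposition of each factor $\mathbb{S}^{2m_i+1}(r_i)$, gives the spectrum
\[
  \mu = \frac{k_1(k_1+2m_1)}{r_1^{2}}+\frac{k_2(k_2+2m_2)}{r_2^{2}} + \frac{1-\tau^2}{\tau^2}(q_1+q_2)^2,
\]
with $k_i\in\mathbb{Z}_{\geq 0}$ and $q_i=\pm(k_i-2p_i)$ for $p_i\in\{0,\dots,\lfloor k_i/2\rfloor\}$; the multiplicity of a joint eigenspace is the real form of the tensor product of the corresponding Tanno subspaces $V(\mu_{k_i,p_i})$.

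Substituting $r_i^2=(2m_i+1)/(2n)$ and dividing by $2n$, the inequality $\mu\leq 4n$ becomes
\[
  F(k_1,k_2) + \tfrac{(1-\tau^2)(q_1+q_2)^2}{2n\tau^2} \leq 2, \qquad F(k_1,k_2) := \tfrac{k_1(k_1+2m_1)}{2m_1+1}+\tfrac{k_2(k_2+2m_2)}{2m_2+1}.
\]
The elementary bound $k(k+2m)/(2m+1)>2$ for $k\geq 2$ restricts attention to $(k_1,k_2)\in\{(0,0),(1,0),(0,1),(1,1)\}$. A direct case analysis then gives the stated formulas: $(0,0)$ always contributes $\mu=0<4n$ with multiplicity $1$; $(1,0)$ and $(0,1)$ give $\mu=2n+(1-\tau^2)/\tau^2$, which is $<4n$, $=4n$, or $>4n$ exactly when $\tau^2>1/(2n+1)$, $=1/(2n+1)$, or $<1/(2n+1)$, with real multiplicities $2(m_1+1)$ and $2(m_2+1)$; and $(1,1)$ contributes $\mu=4n$ precisely on the subspace $q_1+q_2=0$ of real dimension $2(m_1+1)(m_2+1)$, the complementary $q_1+q_2=\pm 2$ piece yielding $\mu>4n$ (for $\tau<1$) and playing no role. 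Summing in the three ranges of $\tau^2$ gives the proposition, using the identities $1+2(m_1+1)+2(m_2+1)=2n+3$ and $2(m_1+1)+2(m_2+1)=2(n+1)$.

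The main delicate point is the spectral step: one must justify that the product Tanno decomposition exhausts $L^2(M)$ and correctly identify real vs.\ complex multiplicities of the joint eigenspaces, especially distinguishing the diagonal ($q_1+q_2=0$) from the anti-diagonal ($q_1+q_2=\pm 2$) halves of the $(1,1)$-level, which have equal real dimensions but opposite roles in the final count.
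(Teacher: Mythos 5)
Your proposal is correct and follows essentially the same route as the paper: reduce the Jacobi operator to $\mathcal{L}=\Delta+4n$, relate $\Delta$ to the round product Laplacian via $\Delta=\prescript{g}{}{\Delta}+(1-\tau^2)(L_\xi)^2$, and run a Tanno-type decomposition to obtain the eigenvalues $\mu_{k_1,k_2,p}$ and the same case analysis over $(k_1,k_2)\in\{(0,0),(1,0),(0,1),(1,1)\}$; your per-factor tensor decomposition tracking $(q_1,q_2)$ is just a more explicit rendering of the paper's ``careful analysis'' of the $(1,1)$-level. The only caveat — which your parenthetical ``(for $\tau<1$)'' already hints at, and which the paper's own statement shares — is that at $\tau^2=1$ the anti-diagonal piece $q_1+q_2=\pm 2$ also lands at $\mu=4n$, so the nullity there is $4(m_1+1)(m_2+1)$ rather than the stated $2(m_1+1)(m_2+1)$.
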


\begin{proof}
  Our first goal is to show a similar relation to \eqref{eq:laplacian} between the Laplacian $\Delta$ of the induced metric by $\Phi$ and the Laplacian $\prescript{g}{}{\Delta}$ of the standard product metric of $\mathbb{S}^{2m_1+1}(r_1) \times \mathbb{S}^{2m_2+1}(r_2)$ so we can write down an expression for the eigenvalues of $\Delta$ following a similar argument as in \autoref{lm:connection-Berger}.(4). We will denote by $\nabla$, $\prescript{g}{}{\nabla}$ the Levi-Civita connections of the Clifford hypersurfaces induced by the Berger $\bprodesc{\cdot}{\cdot}$ and the standard product metric $g$ respectively.

  Firstly, notice that, for any tangent vector field $X$ and any smooth function $f$, $\prodesc{\prescript{g}{}{\nabla} f}{X} = X(f) = \bprodesc{\nabla f}{X}$. Hence, using \eqref{eq:defini-metrica Berger}, we easily get 
  \[
    \nabla f = \prescript{g}{}{\nabla} f + \tfrac{1-\tau^2}{\tau^2} (L_\xi f)\xi.
  \] 

  Secondly, by \autoref{ex:Clifford-hypersurfaces}, we can take orthonormal references on the Clifford hypersurface $\{e_1,\dots,e_{n-1},\xi\}$ (respectively $\{e_1,\dots,e_{n-1},\tau\xi\}$) with respect to the metric $\bprodesc{\cdot}{\cdot}$ (respectively with respect to the metric $g$). Now, using  \eqref{eq:Levi-Civita-connection-Berger-round} and the above relation between the gradients we easily deduce 
  \begin{equation}\label{eq:Clifford-hypersurface-relation-Laplacian}
    \Delta f = \prescript{g}{}{\Delta} f + (1-\tau^2) (L_\xi)^2 f.
  \end{equation} 

  Now, it is well-known that each eigenvalue $\lambda$ of $\prescript{g}{}{\Delta}$ is of the form $\lambda = \lambda_{k_1} + \lambda_{k_2}$, where $\lambda_{k_j} = \frac{1}{r_j^2}k_j(2m_j+k_j)$ is the $k_j$-th eigenvalue of the Laplacian of $(\mathbb{S}^{2m_j+1}(r_j),g)$. The associated eigenspace $V(\lambda_{k_1} + \lambda_{k_2})$ to $\lambda_{k_1} + \lambda_{k_2}$ is generated by $f_1(p)\cdot f_2(q)$ for eigenfunctions $f_j$ associated to the eigenvalue $\lambda_{k_j}$. More precisely, $f_j$ is the restriction to $\mathbb{S}^{2m_j+1}(r_j)$ of a homogeneous harmonic polynomial of degree $k_j$. Hence, $f_1\cdot f_2$ is the restriction to $\mathbb{S}^{2m_1+1}(r_2) \times \mathbb{S}^{2m_2+1}(r_2)$ of a homogeneous harmonic polynomial of degree $k_1 + k_2$. As a consequence, so it is any eigenfunction $h \in V(\lambda_{k_1} + \lambda_{k_2})$. 

  Since $L_\xi$ and $\prescript{g}{}\Delta$ commutes and $L_\xi$ is skew-symmetric we get that $(L_\xi)^2$ is a self-adjoint linear transformation of $V(\lambda_{k_1}+\lambda_{k_2})$ with positive real eigenvalues. Then, we can decompose $h = \sum_{p} h_p$, with $h_p \in V(\lambda_{k_1}+\lambda_{k_2})$ eigenfunctions of $(L_\xi)^2$. But, thanks to the previous paragraph and following \cite[Lemma~3.1]{Tanno1979}, 
  \begin{equation}\label{eq:Clifford-hypersurface-L2xi}
    (L_\xi)^2h_p + \tfrac{1}{\tau^2}(k_1+k_2 - 2p)^2 h_p = 0, \quad \text{for certain } 0 \leq p \leq \lfloor \tfrac{1}{2}(k_1+k_2) \rfloor.
  \end{equation}
Therefore, using \eqref{eq:Clifford-hypersurface-relation-Laplacian}, each eigenvalue $\mu_{k_1,k_2,p}$ of $\Delta$ takes the form
\begin{equation}\label{eq:Clifford-hypersurface-eigenvalues-Laplacian}
  \mu_{k_1,k_2,p} = 2(m_1+m_2+1)\left( k_1 \tfrac{2m_1+k_1}{2m_1+1} + k_2 \tfrac{2m_2+k_2}{2m_2+1} \right) + \tfrac{1-\tau^2}{\tau^2}(k_1+k_2 - 2p)^2, 
\end{equation} 
for some integers $k_j \geq 0$ and $0 \leq p \leq \lfloor \tfrac{1}{2}(k_1 + k_2) \rfloor$, 
where we have taken into account the relation between the radii $r_1, r_2$ and $m_1, m_2$. 

Finally, from \eqref{eq:Jacobi-operator-hypersurfaces} and \autoref{ex:Clifford-hypersurfaces}, the Jacobi operator of the Clifford hypersurface is given by $\mathcal{L} = \Delta + 4(m_1+m_2+1)=\Delta+4n$. 
Then, thanks to \eqref{eq:Clifford-hypersurface-eigenvalues-Laplacian} the non-positive eigenvalues of $\mathcal{L}$ are:
\begin{enumerate}[(i)]
  \item $-4n <0$ with multiplicity $1$ (corresponds to $\mu_{0,0,0}$).
  \item $\frac{1}{\tau^2}-(2n + 1)<0$ with multiplicity $2(n+1)$ when $\tau^2 > \frac{1}{2n+1}$ (corresponds to $\mu_{1,0,0}$ and $\mu_{0,1,0}$).
  \item $0$ with multiplicity $2(n+1)$ when $\tau^2 = \frac{1}{2n+1}$ (corresponds to $\mu_{1,0,0}$ and $\mu_{0,1,0}$)
  \item $0$ with multiplicity $2(m_1+1)(m_2+1)$ (corresponds to $\mu_{1,1,1}$).
\end{enumerate}
where the multiplicities are obtained from the fact that the multiplicities of the first two eigenvalues of $(\mathbb{S}^{2m+1}(r),g)$ are given by $1$ and $2m+2$ respectively and a careful analysis of the condition~\eqref{eq:Clifford-hypersurface-L2xi} in the case of $\mu_{1,1,1}$. So the proof follows.
\end{proof}

\section{Stable compact minimal submanifolds of $\mathbb{S}^{2n+1}_{\tau}$}\label{sec:stable-compact-minimal-submanifolds-Berger-spheres}
Propositions~\ref{prop:index-nullity-totally-geodesic-Berger-spheres} and \ref{prop:index-nullity-Veronese} compute the index of the  $\mathbb{S}^1$-bundles compatible with the Hopf fibration  over the totally geodesic $\mathbb{CP}^m(4)\subset\mathbb{CP}^n(4)$ and the Veronese surface $\mathbb{CP}^1(2)\subset\mathbb{CP}^2(4)$, showing that they are stable when $\tau^2\leq \frac{1}{s(d+1)}$, where $d$ and $s$ are respectively  the dimension and the order of the submanifold. 

In the next result we generalize this property to any minimal submanifold of $\mathbb{S}^{2n+1}_{\tau}$ which is a  $\mathbb{S}^1$-bundle compatible with the Hopf fibration over a complex submanifold of $\mathbb{CP}^n(4)$.

\begin{theorem}\label{thm:S1-bundle-are-stable}
  Let $\Phi: M^{2m+1} \to \mathbb{S}^{2n+1}_\tau$ be a $\mathbb{S}^1$-bundle over a complex immersion $\Psi: N^{2m} \to \mathbb{CP}^n(4)$ compatible with the Hopf fibration and order $s$. If $0 <\tau^2\leq \frac{1}{s(2m+2)}$, then $\Phi$ is stable. 
\end{theorem}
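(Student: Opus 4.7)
Since $\xi^{\perp}=0$, the Jacobi operator of $\Phi$ is given by the simplified formula~\eqref{eq:Jacobi-operator-family-xi-tangent}, and stability is equivalent to
$$\mathcal{Q}(\eta)=\int_M\bigl(|\nabla^{\perp}\eta|^{2}-\bprodesc{\mathcal{A}\eta}{\eta}-(2m+\tau^{2})|\eta|^{2}\bigr)\,\mathrm{d}v\geq 0$$
for every $\eta\in\mathfrak{X}^{\perp}(M)$. The plan is to reduce this to a family of elementary quadratic inequalities in integer Fourier modes by exploiting the combined symmetries of the complex structure $J$ on $T^{\perp}M$ and the $\mathbb{S}^{1}$-action along the fibers of $\hat{\pi}\colon M\to N$.

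First, from~\eqref{eq:properties-immersion-xi-tangent-J-normal} the operators $\nabla^{\perp}$ and $\mathcal{A}$ commute with $J$, so the Jacobi operator decomposes on the complexified bundle $T^{\perp}M\otimes\mathbb{C}=E^{1,0}\oplus E^{0,1}$ and it suffices to analyze $\mathcal{Q}$ on each $J$-eigenbundle. Next, since $\Phi$ has order $s$, the fibers of $\hat{\pi}$ are circles of length $2\pi s\tau$, which yields a Fourier decomposition $\eta=\sum_{k}\eta_{k}$ with $(L_{\xi})^{2}\eta_{k}=-\tfrac{k^{2}}{s^{2}\tau^{2}}\eta_{k}$. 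Because $\xi$ is Killing and both $\Delta^{\perp}$ and $\mathcal{A}$ commute with $L_{\xi}$, the quadratic form diagonalizes as $\mathcal{Q}(\eta)=\sum_{k}\mathcal{Q}(\eta_{k})$.

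Splitting $|\nabla^{\perp}\eta_{k}|^{2}=|\nabla^{\perp}_{\xi}\eta_{k}|^{2}+|\nabla^{\perp,\mathcal{D}}\eta_{k}|^{2}$, I compute the vertical term from the $J$-holonomy of the $\mathbb{S}^{1}$-fiber, verifying that $\nabla^{\perp}_{\xi}$ acts as $L_{\xi}$ shifted by a $J$-multiple, exactly as in the identity $\nabla^{\perp}_{u}a_{j}=-\tfrac{1-\tau^{2}}{\tau}\bprodesc{u}{\xi}Ja_{j}$ used in the proof of \autoref{prop:index-nullity-totally-geodesic-Berger-spheres}. This produces an integrated vertical contribution of the form $\tfrac{(k-s(1-\tau^{2}))^{2}}{s^{2}\tau^{2}}\int_M|\eta_{k}|^{2}\,\mathrm{d}v$ on mode-$k$ $E^{1,0}$-sections (and the conjugate formula on $E^{0,1}$). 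For the horizontal term, I descend via the Riemannian submersion $\pi_{0}\circ\widetilde{\pi}\colon M\to N$, using the identity $\sigma(JX,Y)=J\sigma(X,Y)$ from~\eqref{eq:properties-immersion-xi-tangent-J-normal}, and invoke the classical stability of the complex immersion $\Psi\colon N\to\mathbb{CP}^{n}(4)$ into a Kähler manifold of positive holomorphic sectional curvature (Simons~\cite{Simons68}) to obtain the required positivity of the horizontal quadratic form.

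Aggregating vertical and horizontal contributions, $\mathcal{Q}(\eta_{k})$ is bounded below by the integral of an explicit quadratic function of the integer mode $k$. Direct analysis of this quadratic family, matching the sharp thresholds found in the special cases of \autoref{prop:index-nullity-totally-geodesic-Berger-spheres} ($\Psi$ totally geodesic) and \autoref{prop:index-nullity-Veronese} ($\Psi$ the Veronese), yields nonnegativity of $\mathcal{Q}(\eta_{k})$ for every $k\in\mathbb{Z}$ under the hypothesis $\tau^{2}\leq\tfrac{1}{s(2m+2)}$, proving $\mathcal{Q}(\eta)\geq 0$. The main obstacle in carrying out this plan is the horizontal descent in Step~3: identifying the horizontal kinetic and $\mathcal{A}$-terms on $M$ with a Jacobi-type quadratic form on $N$, with careful bookkeeping of the O'Neill tensor of the submersion and of the $J$-holonomy of the $\mathbb{S}^{1}$-bundle. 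Once this identification is in place, the mode-by-mode verification is an elementary optimization over integers.
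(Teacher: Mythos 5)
Your overall architecture (reduce to a Fourier analysis along the $\mathbb{S}^1$-fibers, track the $J$-shift in $\nabla^\perp_\xi$, and finish with an elementary optimization over the integer mode $k$) matches the second half of the paper's proof, and your vertical contribution $\tfrac{(k-s(1-\tau^2))^2}{s^2\tau^2}\int_M\lvert\eta_k\rvert^2$ is exactly what the paper's identity $\nabla^\perp_\xi\eta=\tfrac1\tau(\nablag^\perp_V\eta-(1-\tau^2)J\eta)$ produces. However, the step you yourself flag as the main obstacle --- the positivity of the horizontal part --- is a genuine gap, and the mechanism you propose for it does not work as stated. Descending through $\hat\pi\colon M\to N$ and invoking the stability of the complex immersion $\Psi$ only controls the $\mathbb{S}^1$-invariant sections: a mode-$k$ section with $k\neq0$ does not descend to a section of $T^\perp N$ but to a section of $T^\perp N$ twisted by the $k$-th power of the line bundle associated to the $\mathbb{S}^1$-bundle, and the second variation of $\Psi$ on the untwisted normal bundle says nothing about these. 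Moreover the vertical/horizontal split is not clean: since $\nabla_X JY$ contains the term $-\tau\bprodesc{X}{Y}\xi$, the brackets $[e_i,Je_i]$ have a vertical component, which generates the coupling term $2\tau m\bprodesc{\nabla^\perp_\xi\eta}{J\eta}$ in the quadratic form; your bookkeeping omits it, and it is precisely this term that forces the threshold $\tau^2\le\tfrac1{s(2m+2)}$ rather than something weaker.

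The missing idea in the paper is a pointwise substitute for ``stability of $\Psi$'': the operator $D_X\eta=\nabla^\perp_{JX}\eta-J\nabla^\perp_X\eta$ on $\Gamma(\mathcal{D})$. Expanding $\sum_{i=1}^m\lvert D_{e_i}\eta\rvert^2\ge0$ and using the Ricci equation together with the codifferential of the $1$-form $\alpha(u)=\bprodesc{\nabla^\perp_{J(u-\bprodesc{u}{\xi}\xi)}\eta}{J\eta}$, one finds that this nonnegative quantity equals $\lvert\nabla^\perp\eta\rvert^2-\bprodesc{\mathcal{A}\eta}{\eta}-\lvert\nabla^\perp_\xi\eta\rvert^2-2m(1-\tau^2)\lvert\eta\rvert^2-2\tau m\bprodesc{\nabla^\perp_\xi\eta}{J\eta}-\delta\alpha$, so after integration the entire horizontal Dirichlet energy, the $\mathcal{A}$-term and the correct share of the zeroth-order term are discarded in one stroke, for every Fourier mode simultaneously. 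This is the same $\bar\partial$-positivity that underlies the calibration argument you are appealing to, but implemented upstairs on $M$, where it applies to twisted modes as well. Without this (or an equivalent vanishing theorem on the twisted bundles, plus a verification that the zeroth-order constants match), your plan does not close.
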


\begin{remark}
In the case $\Phi$ is the induced $\mathbb{S}^1$-bundle then $s = 1$ and so it is stable if  $\tau^2 \leq \frac{1}{2m+2}$.
\end{remark}

\begin{proof}
Following the proof of \autoref{prop:induced-index}, $TM=\mathcal{D}\oplus\langle\xi\rangle$, $\mathcal{D}$ and $T^{\perp}M$ are invariant under the complex structure $J$ of $\mathbb{CP}^{n+1}(4(1-\tau^2))$ and the quadratic form associated to the Jacobi operator is 
  \begin{equation}\label{eq:quadratic-form-S1-bundle-over-complex}
    \mathcal{Q}(\eta) = \int_M \lvert\nabla^\perp \eta\rvert^2 - \bprodesc{\mathcal{A}\eta}{\eta} - (2m + \tau^2)\lvert\eta\rvert^2\, \mathrm{d}v.
  \end{equation}

We consider, for any $X \in \Gamma(\mathcal{D})$, the operator $D_X:\mathfrak{X}^\perp(M) \to \mathfrak{X}^\perp(M)$ given by
\[
D_X \eta = \nabla^\perp_{JX} \eta - J \nabla^\perp_X \eta.  
\]
Let $\{e_1,\ldots, e_m, Je_1,\ldots,Je_m, \xi\}$ be a local tangent orthonormal reference to $M$. Then
\[
\begin{split}
	\sum_{i = 1}^{m} \lvert D_{e_i}\eta\rvert^2 &= \sum_{i = 1}^{m}\{ \lvert\nabla^\perp_{Je_i}\eta\rvert^2 + \lvert J\nabla^\perp_{e_i}\eta\rvert^2 - 2 \bprodesc{\nabla^\perp_{Je_i}\eta}{J\nabla^\perp_{e_i}\eta}\} \\
	&= \lvert\nabla^\perp \eta\rvert^2 - \lvert\nabla^\perp_\xi \eta\rvert^2 - 2\sum_{i = 1}^{m} \bprodesc{\nabla^\perp_{Je_i}\eta}{J \nabla^\perp_{e_i}\eta}.
\end{split}
\] 
Now, let consider, for any $\eta \in \mathfrak{X}^\perp(M)$, the $1$-form on $M$ defined by $\alpha(u) =  \bprodesc{\nabla^\perp_{J(u - \bprodesc{u}{\xi}\xi)} \eta}{J\eta}$. Then, taking into account ~\eqref{eq:properties-covariant-derivative-2ff-immersion-xi-tangent-J-normal} and ~\eqref{eq:properties-immersion-xi-tangent-J-normal}, its codifferential is
\[
\begin{split}
	\delta \alpha &= \sum_{i = 1}^{m} \{ e_i(\alpha(e_i)) - \alpha(\nabla_{e_i}e_i) + (Je_i)(\alpha(Je_i)) - \alpha(\nabla_{Je_i}Je_i) + \xi(\alpha(\xi)) - \alpha(\nabla_\xi \xi) \} \\
	&= \sum_{i = 1}^{m} \{\bprodesc{\nabla^\perp_{e_i}\nabla^\perp_{Je_i} \eta}{J\eta} - \bprodesc{\nabla^\perp_{Je_i}\nabla^\perp_{e_i}\eta}{J\eta} + 2 \bprodesc{\nabla^\perp_{Je_i}\eta}{J\nabla^\perp_{e_i}\eta} - \alpha(\nabla_{e_i} e_i) - \alpha(\nabla_{Je_i}Je_i)\} \\
	&= \sum_{i = 1}^{m}\{ R^\perp(e_i, Je_i, \eta, J\eta) + \bprodesc{\nabla^\perp_{[e_i,Je_i]}\eta}{J\eta}+2 \bprodesc{\nabla^\perp_{Je_i}\eta}{J\nabla^\perp_{e_i}\eta} 
	- \alpha(\nabla_{e_i} e_i) - \alpha(\nabla_{Je_i}Je_i)\}, \\
\end{split}
\] 
where $R^{\perp}$ is the normal curvature of the immersion $\Phi$.
 Now, again from \eqref{eq:properties-immersion-xi-tangent-J-normal} we easily get that
\[
\sum_{i = 1}^{m} \bprodesc{\nabla^\perp_{[e_i,Je_i]}\eta}{J\eta}- \alpha(\nabla_{e_i} e_i) - \alpha(\nabla_{Je_i}Je_i) = -2\tau m \bprodesc{\nabla^\perp_\xi \eta}{J\eta}.
\] 
Moreover, from Ricci equation and using~\eqref{eq:Riemann-tensor-Berger} and~\eqref{eq:properties-immersion-xi-tangent-J-normal} 
\[
\begin{split}
	\sum_{i = 1}^{m} R^\perp(e_i, Je_i, \eta, J\eta) &= \sum_{i = 1}^{m} \bR(e_i, Je_i, \eta, J\eta) + \bprodesc{[A_\eta, A_{J\eta}]e_i}{Je_i} \\
	&= -2m(1-\tau^2) \lvert\eta\rvert^2 - \bprodesc{\mathcal{A}\eta}{\eta}.
\end{split}
\]
As a consequence of all the above computations we obtain that
\[
\sum_{i = 1}^{m} \lvert D_{e_i}\eta\rvert^2 = \lvert\nabla^\perp\eta\rvert^2 - \bprodesc{\mathcal{A}\eta}{\eta} - \lvert\nabla^\perp_\xi \eta\rvert^2 - 2m(1-\tau^2)\lvert\eta\rvert^2 - 2\tau m\bprodesc{\nabla^\perp_\xi \eta}{J\eta} - \delta\alpha.
\] 
Using the last expression in \eqref{eq:quadratic-form-S1-bundle-over-complex},  the quadratic form $\mathcal{Q}$ over any normal vector field $\eta$  can be written as
\begin{equation}\label{eq:inequality-Q-for-S1-bundle}
  \begin{split}
    \mathcal{Q}(\eta) &= \int_M \left(\sum_{i = 1}^{m} \lvert D_{e_i}\eta\rvert^2 + \lvert\nabla^\perp_\xi \eta\rvert^2 - \tau^2(2m+1) \lvert\eta\rvert^2 + 2\tau m \bprodesc{\nabla^\perp_\xi \eta}{J\eta}\right) \mathrm{d}v \\
            &\geq \int_M \left(\lvert \nabla^\perp_\xi \eta\rvert^2 - \tau^2(2m+1) \lvert\eta\rvert^2 + 2\tau m \bprodesc{\nabla^\perp_\xi \eta}{J\eta}\right) \mathrm{d}v, \\
  \end{split}
\end{equation}
where we have used that $\lvert D_{e_i}\eta\rvert^2 \geq 0$.

Now, to get a lower bound of~\eqref{eq:inequality-Q-for-S1-bundle} we change the Berger metric $\langle,\rangle$ in $\mathbb{S}^{2n+1}_{\tau}$ by the standard one $g$. It is clear that the volumen forms $\mathrm{d}v$ of $(M, \Phi^* \bprodesc{\cdot}{\cdot})$  and $\mathrm{d}v_g$ of $(M, \Phi^*g)$ are related by $\mathrm{d}v = \tau \mathrm{d}v_g$. Also, thanks to~\eqref{eq:Levi-Civita-connection-Berger-round} and~\eqref{eq:properties-covariant-derivative-2ff-immersion-xi-tangent-J-normal}, we get
\begin{equation}\label{eq:S1-bundle-stable-nabla-normal-section}
\nabla^\perp_\xi \eta = \tfrac{1}{\tau} \left( \nablag^{\perp}_V \eta - (1-\tau^2) J\eta\right),\qquad A^g_\eta V = 0,
\end{equation}
where $A^g$ is the shape operator of $\Phi: M \to \mathbb{S}^{2n+1}$, and $V$ is the tangent vector field on $M$ defined by $V_p = ip,\,\forall p\in M$. 
 Hence, by the previous formula and \eqref{eq:killing} the inequality in \eqref{eq:inequality-Q-for-S1-bundle} becomes
\[
	\mathcal{Q}(\eta) \geq \tfrac{1}{\tau} \int_M \left[  \lvert\nablag^{\perp}_{V} \eta\rvert^2_g + (\tau^2(2m+2) - 2) \prodesc{\nablag^{\perp}_{V} \eta}{J\eta} + ( 1 - \tau^2 (2m+2)) \lvert\eta\rvert^2_g  \right] \mathrm{d}v_g.
\] 

\noindent\textbf{Claim}: \emph{The operator $G: \mathfrak{X}^\perp(M) \to \mathfrak{X}^\perp(M)$ given by 
  \[
  G \eta=\nablag_V^\perp \nablag_V^\perp \eta-\nablag_{\nablag_VV}^{\perp}\eta=\nablag_V^\perp \nablag_V^\perp \eta,
  \] 
   is a  self-adjoint operator with spectrum $\{\frac{k^2}{s^2}\colon k\in\mathbb{Z},\,k\geq 0\}$.}

Now, we can write $\eta = \sum_{k \geq 0} \eta_k$, with $\eta_k$ an eigensection of $G$ associated to the eigenvalue $\frac{k^2}{s^2}$, i.e., $G\eta_k+\frac{k^2}{s^2}\eta_k=0$. In particular $\eta_0$ satisfies $\nablag^\perp_V \eta_0 = 0$. Hence from the last expression for the quadratic form $Q$ and as $G(\nablag^\perp_V \eta_k)+\frac{k^2}{s^2}\nablag^\perp_V \eta_k=0$ we get
\begin{multline}\label{eq:lower-bound-quadratic-form-xi-tangent}
\mathcal{Q}(\eta) \geq \tfrac{1}{\tau} \int_M (1 - \tau^2(2m+2)) \lvert\eta_0\rvert^2_g \,\mathrm{d}v_g\\ 
+ \tfrac{1}{\tau} \sum_{k \geq 1} \int_M \left[  \lvert\nablag^{\perp}_{V} \eta_k\rvert^2_g + \bigl(\tau^2(2m+2) - 2\bigr) \prodesc{\nablag^{\perp}_{V} \eta_k}{J\eta_k} + \bigl(1 - \tau^2 (2m+2)\bigr) \lvert\eta_k\rvert^2_g  \right] \mathrm{d}v_g\\
\geq \tfrac{1}{\tau} \sum_{k \geq 1} \int_M \left[\left(\frac{k^2}{s^2}+1 - \tau^2 (2m+2)\right) \lvert\eta_k\rvert^2_g +   \bigl(\tau^2(2m+2) - 2\bigr) \prodesc{\nablag^{\perp}_{V} \eta_k}{J\eta_k}   \right] \mathrm{d}v_g,
\end{multline}
where we have used that $1-\tau^2(2m+2)\geq 0$. 

 Now, for any $k\geq 1$ we have that 
\[
0 \leq \frac{1}{2}\int_M \lvert\nablag^\perp_V \eta_k -\tfrac{k}{s} J\eta_k\rvert^2_g \, \mathrm{d}v_g = \frac{k^2}{s^2} \int_M  \lvert\eta_k\rvert^2_g \,\mathrm{d}v_g - \frac{k}{s} \int_M \prodesc{\nablag^\perp_V \eta_k}{J\eta_k} \,\mathrm{d}v_g.
\] 
As $\tau^2(2m+2)-2<0$, using the above inequality in~\eqref{eq:lower-bound-quadratic-form-xi-tangent} we obtain
\[
  \begin{split}
    Q(\eta) &\geq   
            \tfrac{1}{\tau} \sum_{k \geq 1} \left( \frac{k}{s}-1\right)\left[ \left( \frac{k}{s} -1 \right) +2\tau^2(m+1) \right] \int_M \lvert\eta_k\rvert^2_g \\
          &\geq \tfrac{1}{\tau} \sum_{k= 1}^{s-1} \left( \frac{k}{s}-1\right)\left[ \left( \frac{k}{s} -1 \right) +2\tau^2(m+1) \right] \int_M \lvert\eta_k\rvert^2_g \geq 0,       
  \end{split}
\] 
because for $1 \leq k \leq s-1$ we have that $(\frac{k}{s}-1)2\tau^2(m+1)\geq (\frac{k}{s}-1)\frac{1}{s}$.  Hence $\Phi$ is stable.

 \noindent\textbf{Proof of the claim}: 
  Firstly, since $\diver_g V = 0$,
  \[
\begin{split}
  0 &= \int_M \diver_g (\prodesc{\nablag^\perp_V \eta}{\zeta} V) \mathrm{d}v_g = \int_M \left[\prodesc{\nablag^\perp_V \nablag^\perp_V \eta}{\zeta} + \prodesc{\nablag^\perp_V \eta}{\nablag^\perp_V \zeta}\right] \mathrm{d}v_g,\\
    &= \int_M g(G\eta, \zeta) + \prodesc{\nablag^\perp_V \eta}{\nablag^\perp_V \zeta}\, \mathrm{d}v_g,
\end{split} 
 \] 
and  so $\int_Mg(G\eta,\zeta)\,\mathrm{d}v_g=\int_Mg(\eta,G\zeta)\,\mathrm{d}v_g$, which proves that $G$ is self-adjoint.

In fact, $G$ is the vertical normal Laplacian with respect to the fibration $\hat{\pi}:M^{2m+1}\rightarrow N^{2m}$, because if $p\in M$ and $F_p$ is the fiber through $p$, then
\[
(G\eta)_{\lvert F_p}=\Delta_p^{\perp}(\eta_x{\lvert F_p}),
\] 
where $\Delta_p^{\perp}$ is the normal Laplacian of the totally geodesic immersion $F_p\subset M^{2m+1}\rightarrow \mathbb{S}^{2n+1}$. As the order of $M$ is $s$,  all these immersions are congruent to the immersion $:\nu:\mathbb{S}^1\rightarrow\mathbb{S}^{2n+1}$ given by $z\mapsto (z^s,0\dots,0)$, and so  the eigenvalues of the normal Laplacian $\Delta_p^{\perp}$ will be those of the normal Laplacian of $\nu$. If $\Delta_{\nu}^{\perp}$ is the normal Laplacian of the immersion $\nu$, and $\{e_1,\dots,e_n,Je_1,\dots,Je_n\}$ is a global orthonormal reference of the normal bundle, then if $\nu=\sum_{i=1}^n\{f_ie_i+g_iJe_i\}$ is a normal section, then
\[
\Delta_{\nu}^{\perp}\nu=\sum_{i=1}^n\{(\Delta f_i)e_i+(\Delta g_i)Je_i\}.
\]
Hence the eigenvalues of $\Delta_{\nu}^{\perp}$ will be those of the Laplacian of $\mathbb{S}^1$ endowed with the induced metric by $\nu$, i.e., the set $\{\frac{k^2}{s^2}:k\in\mathbb{Z},k\geq 0\}$.
\end{proof}

Since any compact minimal submanifold of the sphere $(\mathbb{S}^{2n+1},g)$ is unstable, any compact minimal submanifold of $\mathbb{S}^{2n+1}_{\tau}$ is also unstable for $\tau$ next to $1$. In the following result we obtain the first value of $\tau$ for which the instability disappears. This value is $\tau^2=\frac{1}{d+1}$, where $d$ is the dimension of the submanifold and for this $\tau$ we classify the stable compact minimal embedded submanifolds.  This value of $\tau$ can be also interpreted as the first value of $\tau$ for which the minimal submanifold $M^d$ of $ \mathbb{S}^{2n+1}_{\tau}$ is also minimal in $\mathbb{CP}^{n+1}(4(1-\tau^2))$. In fact,  from ~\eqref{eq:2ff-Berger-complex-space}, the mean curvature $H$ of $M^d$ in $\mathbb{CP}^{n+1}(4(1-\tau^2))$ is given by
\[
  H = \tfrac{1}{d}\left(\tau d-\tfrac{1-\tau^2}{\tau}\lvert\xi^{\top}\rvert^2\right)J\xi.
\]
Hence $H=0$ if and only if $\tau^2 d =(1-\tau^2)\lvert \xi^{\top}\rvert^2$. As $\lvert\xi^{\top}\rvert^2\leq 1$, if $H=0$ then $\tau^2\leq\frac{1}{d+1}$. It is clear that if $\tau^2=\frac{1}{d+1}$ and $\xi^{\perp}=0$ then $H=0$.

\begin{theorem}\label{thm:classification-stable}
  Let $\Phi: M^d \to \mathbb{S}^{2n+1}_\tau$ be a minimal immersion of a compact $d$-manifold $M$ in the Berger sphere $\mathbb{S}^{2n+1}_\tau$. If $\frac{1}{d+1} \leq \tau^2 \leq 1$ and $\Phi$ is stable then $\tau^2 = \frac{1}{d+1}$, $d = 2m+1$ and $M$ is a $\mathbb{S}^1$-bundle $\hat{\pi}:M^{2m+1}\rightarrow N^{2m}$ over a complex submanifold  $\Psi: N^{2m} \to \mathbb{CP}^n(4)$ compatible with the Hopf fibration.
\end{theorem}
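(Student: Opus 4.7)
The plan is to combine two Simons-type averaging arguments that use both parallel vector fields of $\mathbb{C}^{n+1}$ and holomorphic Killing vector fields of $\mathbb{CP}^{n+1}(4(1-\tau^2))$ as sources of test normal sections on the Jacobi quadratic form $\mathcal{Q}$.

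For each $A\in\mathbb{C}^{n+1}$, I would consider $X_A(z):=A-g(A,z)z\in T_z\mathbb{S}^{2n+1}$ and let $\eta_A\in\mathfrak{X}^{\perp}(M)$ be the Berger-orthogonal projection of $X_A|_M$ onto the normal bundle. Since $A$ is parallel in $\mathbb{C}^{n+1}$, the relation~\eqref{eq:Levi-Civita-connection-Berger-round} combined with the Gauss--Weingarten formulae for $M\subset \mathbb{S}^{2n+1}_\tau$ express $\nabla^\perp\eta_A$, $\mathcal{A}\eta_A$, $\bprodesc{\eta_A}{\xi}$ and $(J\eta_A)^\top$ as explicit pointwise quantities. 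Summing each ingredient of $\mathcal{Q}(\eta_A)$ (see~\eqref{eq:Jacobi-operator}) over a $g$-orthonormal basis $\{A_1,\dots,A_{2n+2}\}$ of $\mathbb{R}^{2n+2}=\mathbb{C}^{n+1}$ and using the completeness identity $\sum_k g(A_k,v)g(A_k,w)=g(v,w)$ produces, via the pointwise identities
\[
  \sum_k|X_{A_k}|^2=2n+\tau^2,\quad \sum_k|\eta_{A_k}|^2=2n+\tau^2-d+(1-\tau^2)|\xi^\top|^2,\quad \sum_k\bprodesc{\eta_{A_k}}{\xi}^2=(1-|\xi^\top|^2)[1-(1-\tau^2)(1-|\xi^\top|^2)],
\]
and their analogues for $\sum_k|(J\eta_{A_k})^\top|^2$, $\sum_k\bprodesc{\mathcal{A}\eta_{A_k}}{\eta_{A_k}}$ and $\sum_k|\nabla^\perp\eta_{A_k}|^2$, an integral identity of the shape
\[
  \sum_{k=1}^{2n+2}\mathcal{Q}(\eta_{A_k})=\int_M\Bigl[(2n+1-d)\bigl(1-(d+1)\tau^2\bigr)-\Psi\Bigr]\mathrm{d}v,
\]
where $\Psi\geq 0$ is a non-negative pointwise functional on $M$ whose zeros characterize simultaneously $\xi^\perp=0$ and the $J$-invariance $J(T^\perp M)\subset T^\perp M$. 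The holomorphic Killing fields on $\mathbb{CP}^{n+1}(4(1-\tau^2))$ (those associated with Hermitian endomorphisms of $\mathbb{C}^{n+2}$) provide the complementary test sections needed to detect the $J$-defect of $T^\perp M$ along directions the parallel-field family excites only partially.

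If $\Phi$ is stable then every $\mathcal{Q}(\eta_{A_k})\geq 0$ and the total sum is non-negative. Under the hypothesis $\tau^2\geq 1/(d+1)$ the prefactor $(2n+1-d)\bigl(1-(d+1)\tau^2\bigr)\leq 0$, so the integrand is non-positive and must vanish identically. Excluding the trivial case $d=2n+1$ (the whole Berger sphere), this forces simultaneously $\tau^2=1/(d+1)$, $\xi^\perp=0$ and $J(T^\perp M)\subset T^\perp M$. Consequently $T^\perp M$ is a complex subbundle of $T\mathbb{S}^{2n+1}_\tau$, so its real dimension $2n+1-d$ is even and $d=2m+1$ is odd. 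Finally \autoref{prop:killing-tangente-normal}(i) yields that $M^{2m+1}$ is an $\mathbb{S}^1$-bundle $\hat\pi\colon M^{2m+1}\to N^{2m}$ over a complex submanifold $\Psi\colon N^{2m}\to\mathbb{CP}^n(4)$ compatible with the Hopf fibration. Consistency check: at $\tau=1$ one recovers the classical Simons identity $\sum_k\mathcal{Q}(\eta_{A_k})=-d(2n+1-d)\,\mathrm{vol}(M)<0$, hence no stable compact minimal submanifolds exist in the round sphere, as expected.

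The main obstacle is the construction of $\Psi$ with the stated non-negativity and rigidity. The Bochner-type reorganization of $\sum_k|\nabla^\perp\eta_{A_k}|^2$, $\sum_k\bprodesc{\mathcal{A}\eta_{A_k}}{\eta_{A_k}}$ and $\sum_k|(J\eta_{A_k})^\top|^2$ into a manifestly non-negative sum of squares whose zeros encode \emph{both} $\xi^\perp=0$ and the $J$-invariance of the normal bundle requires delicate algebraic bookkeeping: the Berger correction of~\eqref{eq:Levi-Civita-connection-Berger-round} mixes horizontal and vertical directions and introduces extra terms involving $\xi^\top$ that must be absorbed via the auxiliary operators $D_X\eta=\nabla^\perp_{JX}\eta-J\nabla^\perp_X\eta$ and $\nabla^\perp_V$ (with $V_z=iz$) already exploited in the proof of \autoref{thm:S1-bundle-are-stable}, together with a careful analysis of how $J$ acts on the normal bundle of $M$ inside $\mathbb{CP}^{n+1}(4(1-\tau^2))$ via~\eqref{eq:2ff-Berger-complex-space}.
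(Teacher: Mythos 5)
Your plan is essentially the paper's proof. The paper also proceeds in two averaging steps: first over a $g$-orthonormal basis of $\mathbb{C}^{n+1}$ with the test sections $X_a^\perp$, yielding $\sum_j\mathcal{Q}(X_{a_j}^\perp)=\int_M P(\lvert\xi^\perp\rvert^2)$ for the explicit polynomial \eqref{eq:polynomial-Jacobi-operator}, whence stability and $\tau^2\geq\tfrac{1}{d+1}$ force $\tau^2=\tfrac{1}{d+1}$ and $\xi^\perp=0$; second over an orthonormal basis of the Hermitian matrices via the Tai embedding, yielding $\sum_j\mathcal{Q}(B_j^\perp)=-\tfrac{4d}{d+1}\sum_i\int_M\lvert(Je_i)^\perp\rvert^2\,\mathrm{d}v$, whence the normal bundle is $J$-invariant; the conclusion then follows from \autoref{prop:killing-tangente-normal}(i), exactly as you say. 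Your completeness identities and the $\tau=1$ consistency check agree with the paper's formulas.

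The one point to correct is your hope for a single non-negative functional $\Psi$, produced by the parallel-field family alone, whose vanishing would encode both $\xi^\perp=0$ and the $J$-invariance of $T^\perp M$. That cannot work: modulo an exact divergence, $\sum_j\bprodesc{\mathcal{L}X_{a_j}^\perp}{X_{a_j}^\perp}$ depends on the immersion only through $\lvert\xi^\perp\rvert^2$, and once $\xi^\perp=0$ and $\tau^2=\tfrac{1}{d+1}$ the sections $X_a^\perp$ are Jacobi fields (see \autoref{co:properties-immersion-xi-tangent-or-normal}(i)), so they are blind to how $J$ sits relative to $T^\perp M$. Conversely, the clean formula for $\sum_j\mathcal{Q}(B_j^\perp)$ quoted above is itself derived using $\xi^\perp=0$ and $\tau^2=\tfrac{1}{d+1}$ (for instance through $\sigma(e_i,\xi)=\tau(Je_i)^\perp$), so the two averaging arguments must be run in sequence rather than merged into one identity. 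With the steps kept separate and in that order --- which the final sentence of your second paragraph already suggests --- your outline is the proof in the paper.
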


As a consequence of Theorems~\ref{thm:S1-bundle-are-stable}, \ref{thm:classification-stable},  and \autoref{prop:killing-tangente-normal}.(i) we get the following result:
\begin{corollary}
Let $\Phi: M^d \to \mathbb{S}^{2n+1}_\tau$ be a minimal embedding of a compact $d$-manifold $M$ in the Berger sphere $\mathbb{S}^{2n+1}_\tau$. If $\frac{1}{d+1} \leq \tau^2 \leq 1$, then  $\Phi$ is stable if and only if $\tau^2 = \frac{1}{d+1}$, $d = 2m+1$ and $M$ is the induced $\mathbb{S}^1$-bundle by the Hopf fibration over a complex embedded submanifold  $\Psi: N^{2m} \to \mathbb{CP}^n(4)$. 
\end{corollary}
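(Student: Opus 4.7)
The plan is to apply the stability inequality $\mathcal{Q}(\eta)\ge 0$ to two families of test normal sections built from ambient vector fields---parallel vector fields on $\mathbb{C}^{n+1}$ and holomorphic vector fields on $\mathbb{CP}^{n+1}(4(1-\tau^2))$---and then sum the resulting inequalities over a natural basis to eliminate the dependence on the chosen field. Because $M$ is assumed stable, each sum must be nonnegative; each of the two computations should produce an integral identity whose sign under $\tau^2 \ge \tfrac{1}{d+1}$ is forced to the opposite direction unless very rigid geometric conditions hold. The desired conclusion is exactly the characterization in \autoref{prop:killing-tangente-normal}(i), so the two families of test sections are chosen to target, respectively, the vanishing of $\xi^\perp$ and the $J$-invariance of $T^\perp M$.

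\textbf{First family: parallel fields on $\mathbb{C}^{n+1}$.} For $v\in\mathbb{C}^{n+1}$, I would consider the normal section $v^\perp$ obtained by projecting $v$ (viewed as a constant vector field on $\mathbb{R}^{2n+2}\supset \mathbb{S}^{2n+1}$) onto $T^\perp M$ with respect to the Berger metric. Using $\nablar v=0$, the connection identity~\eqref{eq:Levi-Civita-connection-Berger-round}, the Gauss/Weingarten formulae for $M\subset\mathbb{S}^{2n+1}_\tau$, and the explicit Jacobi operator~\eqref{eq:Jacobi-operator}, I would rewrite $\mathcal{L}(v^\perp)$ in terms of $v$, $\xi^\top$, $\xi^\perp$ and the second fundamental form. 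Summing $\mathcal{Q}(v_\alpha^\perp)$ over a $g$-orthonormal basis $\{v_\alpha\}_{\alpha=1}^{2n+2}$ collapses the dependence on $v$ via the trace identities $\sum_\alpha \prodesc{v_\alpha}{X}\prodesc{v_\alpha}{Y}=\prodesc{X}{Y}$, and the expected outcome is an inequality of the form
\[
  0 \le \sum_\alpha \mathcal{Q}(v_\alpha^\perp) = \int_M \bigl(1-(d+1)\tau^2\bigr)\,\Psi_1 \; dv \;-\; \int_M (1-\tau^2)\,\Psi_2 \, dv,
\]
where $\Psi_1,\Psi_2\ge 0$ are pointwise functions of $|\xi^\top|^2$, with $\Psi_2$ vanishing exactly when $\xi^\perp=0$. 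Under $\tau^2\ge 1/(d+1)$ both terms on the right are nonpositive, so they must vanish separately, giving $\tau^2=1/(d+1)$ and $\xi^\perp=0$. Since $\xi$ has unit length, its restriction to $M$ is then a nowhere-vanishing tangent field, forcing $d=2m+1$ to be odd.

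\textbf{Second family: holomorphic fields on $\mathbb{CP}^{n+1}(4(1-\tau^2))$.} Once $\xi^\perp=0$ is known, the mean-curvature computation recalled just before the theorem shows that $M$ is minimal also in $\mathbb{CP}^{n+1}(4(1-\tau^2))$. To obtain the remaining conclusion---that $T^\perp M$ is $J$-invariant---I would use as test sections $Y^\perp$, with $Y$ ranging over the holomorphic vector fields of $\mathbb{CP}^{n+1}(4(1-\tau^2))$ induced by $\mathfrak{u}(n+2)$. The Kähler identity $\nablap_{JX}Y=J\nablap_X Y$, combined with~\eqref{eq:2ff-Berger-complex-space} for the geodesic-sphere embedding $F$, makes $\mathcal{L}(Y^\perp)$ computable. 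Summing $\mathcal{Q}(Y^\perp)$ over a basis of $\mathfrak{u}(n+2)$ and tracing, the integrand should reduce to a nonnegative multiple of the defect $\sum_i |(Je_i)^\top|^2$, where $\{e_i\}$ is any orthonormal frame of $T^\perp M$. Stability forces this defect to vanish, whence $J(T^\perp M)\subset T^\perp M$. Together with $\xi^\perp=0$, \autoref{prop:killing-tangente-normal}(i) then produces the $\mathbb{S}^1$-bundle structure over a complex submanifold of $\mathbb{CP}^n(4)$ compatible with the Hopf fibration.

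\textbf{Main obstacle.} The hard part is arranging the two trace computations so that after cancellation the surviving integrand is a sum of terms whose signs are dictated exactly by the hypothesis $\tau^2\ge 1/(d+1)$. This requires careful bookkeeping of the $(1-\tau^2)$-corrections in~\eqref{eq:Levi-Civita-connection-Berger-round} and~\eqref{eq:Riemann-tensor-Berger}, together with repeated use of the Gauss and Ricci equations and of~\eqref{eq:2ff-Berger-complex-space}. The nontrivial miracle to hope for is that the borderline case $\tau^2=1/(d+1)$ is precisely the one for which $M$ becomes minimal inside the larger ambient space $\mathbb{CP}^{n+1}(4(1-\tau^2))$ once $\xi^\perp=0$ holds, so both test-section computations should become simultaneously tight at this same parameter value.
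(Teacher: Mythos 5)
Your sketch reproduces, in outline, the paper's proof of \autoref{thm:classification-stable}: the first trace computation with the parallel fields $X_a$ of $\mathbb{C}^{n+1}$ is exactly Claim~1 there (the sum $\sum_j\mathcal{Q}(X_{a_j}^\perp)$ equals $\int_M P(\lvert\xi^\perp\rvert^2)$ for an explicit quadratic polynomial, forcing $\tau^2=\tfrac{1}{d+1}$ and $\xi^\perp=0$), and the second computation with holomorphic fields coming from Hermitian matrices is Claim~2 (the surviving integrand is $-\tfrac{4d}{d+1}\sum_i\int_M\lvert(Je_i)^\perp\rvert^2$, forcing $J$-invariance of the normal bundle). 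So for the ``only if'' direction you are on the paper's track, and your guess about the borderline $\tau^2=\tfrac{1}{d+1}$ being exactly the minimality threshold in $\mathbb{CP}^{n+1}(4(1-\tau^2))$ is the same heuristic the authors record before the theorem.

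The genuine gap is that the statement is an equivalence and your proposal only proves one implication. You never address why the induced $\mathbb{S}^1$-bundle over an embedded complex submanifold is actually \emph{stable} when $\tau^2=\tfrac{1}{d+1}=\tfrac{1}{2m+2}$. This is not a formality: in the paper it is \autoref{thm:S1-bundle-are-stable}, whose proof is a separate and substantial argument (a rearrangement of the quadratic form via the operators $D_X\eta=\nabla^\perp_{JX}\eta-J\nabla^\perp_X\eta$, a divergence identity, and a fiberwise spectral analysis of the vertical normal Laplacian $G$ with spectrum $\{k^2/s^2\}$), and the stability threshold $\tau^2\le\tfrac{1}{s(2m+2)}$ genuinely depends on the order $s$ of the bundle. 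Relatedly, your argument only delivers a $\mathbb{S}^1$-bundle \emph{compatible} with the Hopf fibration (of some order $s\ge 1$); to conclude that $M$ is the \emph{induced} bundle over an \emph{embedded} $N$ you must use the hypothesis that $\Phi$ is an embedding to force $s=1$ (since $\Phi=\Phi_0\circ\widetilde{\pi}$ with $\widetilde{\pi}$ an $s$-sheeted covering, injectivity of $\Phi$ gives $s=1$ and $\Psi$ embedded). Both of these steps need to be supplied before your proposal proves the corollary as stated.
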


\begin{remark}
  The authors believe that the minimal examples described in \autoref{prop:killing-tangente-normal}.(i) are unstable when $\tau^2=\frac{1}{2m+2}$ and the order $s \geq 2$, as \autoref{ex:examples-induced-bundle-by-Hopf-fibration}.(\ref{ex:sphere}) for $m = 0$ and \autoref{ex:examples-induced-bundle-by-Hopf-fibration}.(\ref{ex:Veronese}) corroborate (see Propositions~\ref{prop:index-nullity-totally-geodesic-Berger-spheres} and~\ref{prop:index-nullity-Veronese}).
\end{remark}

\begin{proof}
  \textbf{Claim 1}: \emph{If $\frac{1}{d+1}\leq \tau^2 \leq 1$ and $\Phi$ is stable then $\tau^2 = \frac{1}{d+1}$ and $\xi^\perp = 0$.}

Firstly, we are going to define certain vector fields on $\mathbb{S}^{2n+1}_{\tau}$, whose normal components will be test sections for the quadratic form $\mathcal{Q}$. To do that, for each $a \in \mathbb{C}^{n+1}$ let $X_a \in \mathfrak{X}(\mathbb{S}^{2n+1}) = \mathfrak{X}(\mathbb{S}^{2n+1}_\tau)$ be the vector field given by 
\[
  (X_a)_p = a - g(a,p)p,\quad \text{for all } p \in \mathbb{S}^{2n+1}.
\] 

If $\nablae$ is the Levi-Civita connection of the Euclidean metric $g$ in $\mathbb{C}^{n+1}$, then we get for any $u \in T_p \mathbb{S}^{2n+1}$,
	\[
	0 = \nablae_u a = \nablae_u(X_a + \prodesc{a}{p}p) = \nablar_u X_a + \prodesc{a}{p}u,
	\] 
  where we recall that $\nablar$ is the Levi-Civita connection of $(\mathbb{S}^{2n+1}, g)$ (see Lemma~\ref{lm:connection-Berger}).
	So $\nablar_u X_a = -f_au$, where $f_a: \mathbb{S}^{2n+1} \to \mathbb{R}$ is given by $f_a(p) = \prodesc{a}{p}$. 

Now, using the relation between the Levi-Civita connections $\nablab$ of $\mathbb{S}^{2n+1}_\tau$ and $\nablar$ given in~\eqref{eq:Levi-Civita-connection-Berger-round}, we get the following behaviour of the vector field $X_a$ with respect to $\nablab$
	\begin{equation}\label{eq:derivative-Xa}
		\bprodesc{\nablab_u X_a}{v} = -f_a \bprodesc{u}{v} - \tfrac{1 - \tau^2}{\tau}\left( \bprodesc{X_a}{\xi} \bprodesc{Ju}{v} + \bprodesc{u}{\xi} \bprodesc{JX_a}{v}\right).
	\end{equation} 
  for any $u, v \in T_p \mathbb{S}^{2n+1}_\tau$. 

  In this situation, given the immersion $\Phi: M^d \to \mathbb{S}^{2n+1}_\tau$, we decompose $X_a \in \mathfrak{X}(\mathbb{S}^{2n+1}_\tau)$ in its tangent a normal component to $\Phi$, i.e.\ $X_a= X_a^\top + X_a^\perp$. Our goal is to compute the Jacobi operator~\eqref{eq:Jacobi-operator} acting on $X_a^\perp$.

  From \eqref{eq:derivative-Xa} we deduce that, for any $u \in T_pM$, 
	\begin{align}
		\nabla_{u} X_a^\top &= -f_a u + A_{X_a^\perp} u - \tfrac{1-\tau^2}{\tau} \bigl[ \bprodesc{X_a}{\xi} (Ju)^\top + \bprodesc{u}{\xi} (JX_a)^\top \bigr] \label{eq:derivative-Xa-tangent}, \\
		\nabla^\perp_{u} X_a^\perp &= -\sigma(u, X_a^\top) - \tfrac{1-\tau^2}{\tau} \bigl[ \bprodesc{X_a}{\xi}(Ju)^\perp + \bprodesc{u}{\xi}(JX_a)^\perp \bigr] \label{eq:derivative-Xa-normal}.
	\end{align} 
	
	We will make the computation of $\mathcal{L}X_a^\perp$ at a point $p \in M$ and we will use a orthonormal tangent reference $\{e_1,\dots,e_d\}$ to $M$ satisfying $(\nabla_{e_k} e_j)_p = 0$. Hence, taking normal derivatives with respect to $e_i$ in \eqref{eq:derivative-Xa-normal} and using Codazzi equation we deduce
	\[
	\begin{split}
		\nabla_{e_i}^\perp \nabla_{e_i}^\perp X_a^\perp &= (\bR(X_a^\top, e_i)e_i)^\perp - \sigma(e_i, \nabla_{e_i}X_a^\top) - \tfrac{1-\tau^2}{\tau} \nabla^\perp_{e_i} \left( \bprodesc{X_a}{\xi}(Je_i)^\perp + \bprodesc{e_i}{\xi} (JX_a)^\perp \right).
	\end{split}
	\] 
	But, using \eqref{eq:derivative-Xa-tangent}, the minimality assumption and that $\sum_{i} \sigma(e_i, (Je_i)^\top) = 0$ because the skew-symmetry of $J$, we get
	\[
	\sum_{i = 1}^{d} \sigma(e_i, \nabla_{e_i}X_a^\top) = \mathcal{A}X_a^\perp - \tfrac{1-\tau^2}{\tau} \sigma(\xi^\top, (JX_a)^\top).
	\] 
	As a consequence
	\begin{equation}\label{eq:normal-Laplacian-Xa}
		\begin{split}
			\Delta^\perp X_a^\perp = &\sum_{i = 1}^{d} (\bR(X_a^\top,e_i)e_i)^\perp - \mathcal{A} X_a^\perp + \tfrac{1-\tau^2}{\tau} \sigma(\xi^\top, (JX_a)^\top) \\
			&- \tfrac{1-\tau^2}{\tau} \sum_{i = 1}^{d} \nabla^\perp_{e_i} \left( \bprodesc{X_a}{\xi}(Je_i)^\perp + \bprodesc{e_i}{\xi}(JX_a)^\perp \right).
		\end{split}
	\end{equation} 
	We now compute the last sum. Firstly, using the minimality assumption, \eqref{eq:covariant-derivative-xi} and~\eqref{eq:derivative-Xa}, we get 
	\begin{equation}\label{eq:auxiliar-derivative-Xa-xi}
		e_i(\bprodesc{X_a}{\xi}) = -f_a \bprodesc{e_i}{\xi} - \tau \bprodesc{JX_a}{e_i}\quad \text{and} \quad \sum_{i = 1}^{d} e_i(\bprodesc{e_i}{\xi}) = 0.
	\end{equation} 
	As a direct consequence of the previous equation
	\[
	\begin{split}
		\sum_{i = 1}^{d} \nabla^\perp_{e_i} \left( \bprodesc{X_a}{\xi}(Je_i)^\perp + \bprodesc{e_i}{\xi}(JX_a)^\perp \right) &= -f_a(J\xi^\top)^\perp - \tau[J(JX_a)^\top]^\perp \\
		&+ \sum_{i = 1}^{d} \left(\bprodesc{X_a}{\xi} \nabla^\perp_{e_i}(Je_i)^\perp + \bprodesc{e_i}{\xi} \nabla^\perp_{e_i} (JX_a)^\perp\right).
	\end{split}
	\] 
  Now, in order to compute the last term of the above formula we are going to obtain a general expression for $\nabla_u^\perp (JY)^\perp$ where $Y$ is any vector field in $\mathbb{S}^{2n+1}_\tau$ and $u \in T_pM$. To do so, we are going to consider $\mathbb{S}^{2n+1}_\tau$ isometrically embedded in $\mathbb{CP}^{n+1}(4(1-\tau^2))$ (see \autoref{prop:geodesic-spheres-complex-spaces}). On the one hand, decomposing $JY \in \mathfrak{X}(\mathbb{CP}^{n+1}(4(1-\tau^2)))$ as $JY = (JY)^\top + (JY)^\perp + \bprodesc{Y}{\xi}J\xi$ and using~\eqref{eq:2ff-Berger-complex-space} and~\eqref{eq:covariant-derivative-xi} we deduce
	\[
    \begin{split}
      (\nablap_u JY)^\perp &= (\nablap_u (JY)^\top)^\perp + (\nablap_u (JY)^\perp)^\perp + \bprodesc{Y}{\xi} (\nablap_u J\xi)^\perp \\
                           &= \sigma(u, (JY)^\top) + \nabla^\perp_u (JY)^\perp + \tfrac{1-\tau^2}{\tau} \bprodesc{Y}{\xi} \bprodesc{u}{\xi}\xi^\perp,
    \end{split}
	\] 
where $\nablap$ is the Levi-Civita connection of $\mathbb{CP}^{n+1}(4(1-\tau^2))$.

	On the other hand, using \eqref{eq:2ff-Berger-complex-space}
	\[
	(\nablap_u JY)^\perp = (J \nablap_u Y)^\perp = (J \nablab_u Y)^\perp - \left( \tau \bprodesc{u}{Y} - \tfrac{1-\tau^2}{\tau} \bprodesc{u}{\xi} \bprodesc{Y}{\xi} \right)\xi^\perp.
	\] 
	Therefore, from both expressions of $(\nablap_u JY)^\perp$, we get
	\begin{equation}\label{eq:derivative-JY-normal}
		\nabla^\perp_u (JY)^\perp = (J \nablab_u Y)^\perp -\sigma(u, (JY)^\top)  - \tau \bprodesc{u}{Y}\xi^\perp.
	\end{equation}
	As a consequence, taking into account the minimality of $\Phi$ and~\eqref{eq:derivative-JY-normal}, we get 
	\begin{equation}\label{eq:covariant-derivativa-Je-normal}
		\sum_{i = 1}^{d} \nabla^\perp_{e_i} (Je_i)^\perp = -\tau d \xi^\perp,
	\end{equation} 
	and also, by \eqref{eq:derivative-Xa} and \eqref{eq:derivative-JY-normal},
	\[
	\begin{split}
		\sum_{i = 1}^{d} \bprodesc{e_i}{\xi} \nabla^\perp_{e_i} (JX_a)^\perp = &-f_a (J\xi^\top)^\perp - 2 \tfrac{1-\tau^2}{\tau} \bprodesc{X_a}{\xi}\lvert\xi^\top\rvert^2 \xi^\perp + \tfrac{1-\tau^2}{\tau} \lvert\xi^\top\rvert^2 X_a^\perp \\
		&- \tau \bprodesc{X_a}{\xi^\top} \xi^\perp - \sigma(\xi^\top, (JX_a)^\top).
	\end{split}
	\] 
	Lastly, using \eqref{eq:Riemann-tensor-Berger} we get
	\begin{equation}\label{eq:Riemann-tensor-Xa-tangent}
		\sum_{i = 1}^{d} (\bR(X_a^\top,e_i)e_i)^\perp = - 3(1-\tau^2) [J(JX_a^\top)^\top]^\perp + (1-\tau^2)(1-d)\bprodesc{X_a^\top}{\xi}\xi^\perp.
	\end{equation} 
	Using the above computations in \eqref{eq:normal-Laplacian-Xa} and taking into account the expression of the Jacobi operator given in \eqref{eq:Jacobi-operator}, we finally obtain 
	\begin{equation}\label{eq:Jacobi-Xa-normal}
		\begin{split}
			\mathcal{L} X_a^\perp = & \left( d - \tfrac{1-\tau^2}{\tau^2} \lvert\xi^\top\rvert^2 \right) X_a^\perp +2 \tfrac{1-\tau^2}{\tau} \sigma(\xi^\top, (JX_a)^\top) + 2 \tfrac{1-\tau^2}{\tau} f_a (J\xi^\top)^\perp \\ 
			&- 2(1-\tau^2)[J(JX_a)^\top]^\perp +2(1-\tau^2) \left( \bprodesc{X_a}{\xi^\top} + \tfrac{1-\tau^2}{\tau^2} \lvert\xi^\top\rvert^2 \bprodesc{X_a}{\xi}\right) \xi^\perp. \\
		\end{split}
	\end{equation} 

  We will now compute $\sum_{j} \bprodesc{\mathcal{L} X_{a_j}^\perp}{X_{a_j}^\perp}$ for a $g$-orthonormal basis of $\mathbb{C}^{n+1}$ $\{a_j\colon j = 1,\ldots, 2n+2\}$. Taking into account the definition of the Berger metric \eqref{eq:defini-metrica Berger} we get
  \begin{equation}\label{eq:sum-product-Xa}
    \sum_{j = 1}^{2n+2} \bprodesc{X_{a_j}}{u} \bprodesc{X_{a_j}}{v} = \bprodesc{u}{v} - (1-\tau^2) \bprodesc{u}{\xi} \bprodesc{v}{\xi}. 
  \end{equation} 
Then, as a direct consequence of the previous formula,
\begin{align*}
  \sum_{j = 1}^{2n+2} \bprodesc{X_{a_j}^\perp}{X_{a_j}^\perp} &= \sum_{\alpha = 1}^q \sum_{j =1}^{2n+2} \bprodesc{X_{a_j}}{\zeta_\alpha}^2 =  q - (1-\tau^2) \lvert\xi^\perp\rvert^2, \\
  \sum_{j = 1}^{2n+2} f_{a_j} \bprodesc{(J\xi^\top)^\perp}{X_{a_j}} &= \sum_{j = 1}^{2n+2} \prodesc{a_j}{p}\left( \prodesc{(J\xi^\top)^\perp}{a_j} - (1-\tau^2) \prodesc{(J\xi^\top)^\perp}{ip} \prodesc{a_j}{ip}  \right) \\
  &= \prodesc{(J\xi^\top)^\perp}{p} - (1-\tau^2) \prodesc{(J\xi^\top)^\perp}{ip} \prodesc{p}{ip} = 0, \\
  \sum_{j = 1}^{2n+2} \bprodesc{J(JX_{a_j})^\top}{X_{a_j}^\perp} &= -\sum_{i = 1}^{d} \sum_{j = 1}^{2n+2} \bprodesc{X_{a_j}}{Je_i} \bprodesc{X_{a_j}}{(Je_i)^\perp} = -\sum_{i = 1}^{d} \lvert(Je_i)^\perp\rvert^2, \\
  \sum_{j = 1}^{2n+2} \bprodesc{X_{a_j}}{\xi^\top} \bprodesc{X_{a_j}}{\xi^\perp} &= -(1-\tau^2) \lvert\xi^\perp\rvert^2 \lvert\xi^\top\rvert^2,\\
  \sum_{j = 1}^{2n+2} \bprodesc{X_{a_j}}{\xi} \bprodesc{X_{a_j}}{\xi^\perp} &= \tau^2 \lvert\xi^\perp\rvert^2,
\end{align*}
where $\{\zeta_\alpha\colon \alpha = 1,\ldots, q\}$ is an orthonormal reference of the normal bundle of $\Phi$. Moreover, using again \eqref{eq:sum-product-Xa} and taking into account that $\sigma(\xi^\top, u) = \tau(Ju)^\perp - \nabla^\perp_u \xi^\perp$ from \eqref{eq:covariant-derivative-xi}, 
\[
\begin{split}
  \sum_{j = 1}^{2n+2} \bprodesc{\sigma(\xi^\top, (JX_{a_j})^\top)}{X_{a_j}^\perp} &= - \sum_{i,\alpha} \bprodesc{\sigma(\xi^\top, e_i)}{\zeta_\alpha}\sum_{j = 1}^{2n+2} \bprodesc{X_{a_j}}{Je_i} \bprodesc{X_{a_j}}{\zeta_\alpha} \\
  &= -\sum_{i = 1}^{d} \bprodesc{\sigma(\xi^\top, e_i)}{Je_i} = -\tau \sum_{i = 1}^{d} \lvert(Je_i)^\perp\rvert^2 + \sum_{i = 1}^{d} \bprodesc{\nabla^\perp_{e_i} \xi^\perp}{Je_i} = \\
  &= -\tau \sum_{i = 1}^{d} \lvert(Je_i)^\perp\rvert^2 + \tau d \lvert\xi^\perp\rvert^2 + \delta \beta,
\end{split}
\] 
where $\beta$ is the $1$-form $\beta(X) = \bprodesc{\xi^\perp}{JX}$ whose codifferential is given by 
\[
  \delta \beta = \sum_{i = 1}^{d} \left( \bprodesc{\nabla^\perp_{e_i}\xi^\perp}{Je_i} + \bprodesc{\xi^\perp}{\nabla^\perp_{e_i} (Je_i)^\perp} \right)  = \sum_{i = 1}^{d}  \bprodesc{\nabla^\perp_{e_i}\xi^\perp}{Je_i} -\tau d \lvert\xi^\perp\rvert^2,
\] 
thanks to \eqref{eq:covariant-derivativa-Je-normal}.

Finally, after a long but straightforward computation writing $\lvert\xi^\top\rvert^2 = 1 - \lvert\xi^\perp\rvert^2$,
  \[
 \  \sum_{j = 1}^{2n+2} \bprodesc{\mathcal{L} X_{a_j}^\perp}{X_{a_j}^\perp}= -P(\lvert\xi^\perp\rvert^2) + 2\tfrac{1-\tau^2}{\tau} \delta \beta,
  \] 
  where $P$ is the polynomial 
  \begin{equation}\label{eq:polynomial-Jacobi-operator}
    P(x) = \tfrac{(1-\tau^2)^2}{\tau^2} x^2 - \tfrac{1-\tau^2}{\tau^2}\bigl(1+q+\tau^2(d-1)\bigr) x -q\bigl(d + 1 - \tfrac{1}{\tau^2}\bigr).
  \end{equation} 

  Now, if $M$ is stable and $\frac{1}{d+1} \leq \tau^2 \leq 1$ then, using the divergence theorem, we have that 
  \[
    \begin{split}
      0 \leq \sum_{j = 1}^{2n+2} \mathcal{Q}(X_{a_j}^\perp) = \int_M P(\lvert\xi^\perp\rvert^2) &\leq \int_M -\tfrac{(1-\tau^2)}{\tau^2} (q + \tau^2 d) \lvert\xi^\perp\rvert^2 - q(d+1-\tfrac{1}{\tau^2}) \\
    &\leq -q(d+1-\tfrac{1}{\tau^2}) \mathrm{vol}(M) \leq 0,
    \end{split}
  \] 
  where we have used that $\lvert\xi^\perp\rvert^4 \leq \lvert\xi^\perp\rvert^2$ in the second inequality and that $\lvert\xi^\perp\rvert^2 \geq 0$ in the third one. Therefore, $\tau^2 = \frac{1}{d+1}$ and $\xi^{\perp}=0$.

  \textbf{Claim 2}: \emph{Let $\Phi: M^d \to \mathbb{S}^{2n+1}_\tau$ be a stable minimal immersion of a compact $d$-dimensional manifold. If $\tau^2 = \frac{1}{d+1}$ and $\xi^\perp = 0$ then  the normal bundle $T^{\perp}M$ and the subbundle $\mathcal{D}$ defined by  $TM=\mathcal{D}\oplus\langle\xi\rangle$ are invariants under $J$. } 

  Unlike Claim~1, where we construct test sections coming from fixed vectors in $\mathbb{C}^{n+1}$, now we are going to obtain test sections coming from fixed matrices $B \in HM(n+1)$, the space of Hermitian matrices of order $n+1$ (see Section~\ref{ap:complex-projective-space}). To do so,  we consider the Berger sphere $\mathbb{S}^{2n+1}_\tau$ isometrically embedded in $\mathbb{CP}^{n+1}(4(1-\tau^2))$ (see Proposition~\ref{prop:geodesic-spheres-complex-spaces}) and the complex projective space isometrically embedded in $HM(n+1)$ (see Proposition~\ref{prop:properties-embedding-complex-projective-euclidean}). 

In fact, given $B \in HM(n+1)$, from \autoref{prop:properties-embedding-complex-projective-euclidean}, it is not difficult to check that its tangent component $X_B$ to $\mathbb{CP}^{n+1}(4(1-\tau^2))$ is a holomorphic vector field. So, we consider the orthogonal decomposition 
\[
  B = X_B + B^N =  B^\top + B^\perp + \bprodesc{B}{J\xi}J\xi + B^N,
\]
where $B^N$ denotes the normal component of $B$ to $T:\mathbb{CP}^{n+1}(4(1-\tau^2)) \to HM(n+1)$ (see \autoref{prop:properties-embedding-complex-projective-euclidean}), $B^\top$ is the tangent component to $M$, $B^\perp$ is the normal component to $\Phi: M \to \mathbb{S}^{2n+1}_\tau$, and $\bprodesc{B}{J\xi}J\xi$ the normal component to $F:\mathbb{S}^{2n+1}_\tau \to \mathbb{CP}^{n+1}(4(1-\tau^2))$ (see \autoref{prop:geodesic-spheres-complex-spaces}).

Now, for each $B \in HM(n+1)$, we consider the test normal section $B^\perp$ and our goal is to compute the quadratic form $\mathcal{Q}(B^\perp)$. Let $\{e_i\colon i = 1,\ldots,d\}$ be a orthonormal tangent reference to $M$. We will make the computation at a point $p \in M$ assuming that $(\nabla_{e_i}e_j)_p = 0$.

Firstly, since the covariant derivative of $B$ in $\HM(n+1)$ vanishes, we deduce that, for any $u\in T_pM$,
\begin{align*}
	\nabla_u B^\top &= A_{B^\perp} u - \bprodesc{B}{J\xi} (\nablap_u J\xi)^\top  + (\overline{A}_{B^N}u)^\top,\label{eq:derivative-B-tangent}\\
	\nabla^\perp_u B^\perp &= -\sigma(u, B^\top) - \bprodesc{B}{J\xi} (\nablap_u J\xi)^\perp + (\overline{A}_{B^N}u)^\perp,
\end{align*}
where $\nablap$ is the Levi-Civita connection of $\mathbb{CP}^{n+1}(4(1-\tau^2))$ and $\overline{A}$ is the shape operator of $\mathbb{CP}^{n+1}(4(1-\tau^2))$ in $\HM(n+1)$. Now, using that $\xi$ is tangent to $M$, \eqref{eq:2ff-Berger-complex-space} and \eqref{eq:covariant-derivative-xi} we get
\[
  \nablap_u J\xi = J\nablap_u \xi = J \left( \nablab_u \xi + \hat{\sigma}(u,\xi) \right) = -\tau u + \tfrac{1-\tau^2}{\tau} \bprodesc{u}{\xi}\xi.
\] 
Hence, the previous two equalities become in
\begin{align}
	\nabla_u B^\top &= A_{B^\perp} u + \bprodesc{B}{J\xi}\left( \tau u - \tfrac{1-\tau^2}{\tau} \bprodesc{u}{\xi}\xi \right)  + (\overline{A}_{B^N}u)^\top,\label{eq:derivative-B-tangent}\\
	\nabla^\perp_u B^\perp &= -\sigma(u, B^\top) + (\overline{A}_{B^N}u)^\perp.\label{eq:derivative-B-normal}
\end{align}

Now, using Codazzi equation, the minimality of $\Phi$, \eqref{eq:derivative-B-tangent} and~\eqref{eq:derivative-B-normal}, and the fact that $\sigma(e_i,\xi) = \tau (Je_i)^\perp$ thanks to \eqref{eq:covariant-derivative-xi}, we get
\[
\Delta^\perp B^\perp = -\mathcal{A}B^\perp + \sum_{i = 1}^{d} \left[ (\bR(B^\top, e_i)e_i)^\perp - \sigma(e_i, (\overline{A}_{B^N}e_i)^\top)+ \nabla^\perp_{e_i} (\overline{A}_{B^N}e_i)^\perp \right].
\] 

Then, taking into account that $\xi$ is tangent to $M$, that $B^\top + B^\perp = X_B - \bprodesc{B}{J\xi}J\xi$, using \eqref{eq:Riemann-tensor-Berger}, \eqref{eq:Jacobi-operator} and the previous formula, we deduce that
\begin{multline}\label{eq:Jacobi-operator-B-normal}
    \mathcal{L} B^\perp = [d - (1-\tau^2)]B^\perp - 3(1-\tau^2)[J(JX_B)^\top]^\perp \\
                        - \sum_{i = 1}^{d} \left[\sigma(e_i, (\overline{A}_{B^N}e_i)^\top) - \nabla^\perp_{e_i} (\overline{A}_{B^N}e_i)^\perp\right].
\end{multline} 

Let $\{B_j\colon j = 1,\ldots, (n+1)^2\}$ be a orthonormal reference in $\HM(n+1)$. We compute the sum $\sum_j \bprodesc{\mathcal{L} B_j^\perp}{B_j^\perp}$. Notice that
\[
\begin{split}
  -\sum_{j} \bprodesc{J(JX_{B_j})^\top}{B_j^\perp} &= \sum_{i,j} \bprodesc{(JX_{B_j})^\top}{JB_j^\perp} = \sum_{i,j} \bprodesc{JX_{B_j}}{e_i} \bprodesc{e_i}{JB_j^\perp} \\  
                                                   &= \sum_{i,j} \bprodesc{X_{B_j}}{Je_i} \bprodesc{B_j}{(Je_i)^\perp} = \sum_{i,j} \bprodesc{B_j - B_j^N}{Je_i} \bprodesc{B_j}{(Je_i)^\perp}  \\
                                                   &= \sum_{i = 1}^d \lvert(Je_i)^\perp\rvert^2,\\
	\sum_{j} \lvert B_j^\perp\rvert^2 &= \sum_{j, \alpha} \bprodesc{B_j}{\zeta_\alpha}^2 = \sum_{\alpha = 1}^{q} \lvert\zeta_\alpha\rvert^2 = q, \\
	\sum_{j} \bprodesc{\sigma(e_i, (\overline{A}_{B_j^N}e_i)^\top)}{B_j^\perp} &= \sum_{j, k} \bprodesc{\overline{A}_{B_j^N}e_i}{e_k} \bprodesc{\sigma(e_i, e_k)}{B_j} = \sum_{i, k} \bprodesc{\overline{\sigma}(e_i, e_k)}{\sigma(e_i, e_k)} = 0,
\end{split}
\] 
where $\{\zeta_\alpha\colon \alpha = 1,\ldots, q\}$ represents an orthonormal reference of the normal bundle of $\Phi$. 

We now compute the last term in \eqref{eq:Jacobi-operator-B-normal}
\[
\begin{split}
	\sum_{i,j} \bprodesc{\nabla^\perp_{e_i} (\overline{A}_{B_j^N}e_i)^\perp}{B_j^\perp} &= \sum_{i,j,\alpha} \bprodesc{\nabla^\perp_{e_i} (\bprodesc{\overline{A}_{B_j^N}e_i}{\zeta_\alpha}\zeta_\alpha)}{B_j^\perp} = \sum_{i,j,\alpha} \bprodesc{\nabla^\perp_{e_i} \bprodesc{\overline{\sigma}(e_i,\zeta_\alpha)}{B_j} \zeta_\alpha}{B_j}  \\
	&= \sum_{i,j,\alpha} \left[ e_i(\bprodesc{\overline{\sigma}(e_i,\zeta_\alpha)}{B_j}) \bprodesc{\zeta_\alpha}{B_j} + \bprodesc{\overline{\sigma}(e_i, \zeta_\alpha)}{B_j} \bprodesc{\nabla^\perp_{e_i} \zeta_\alpha}{B_j} \right] \\%
	&= \sum_{i,j,\alpha} \bprodesc{\zeta_\alpha}{B_j} \bprodesc{-\overline{A}_{\overline{\sigma}(e_i,\zeta_\alpha)}e_i + \nablap^N_{e_i} \overline{\sigma}(e_i,\zeta_\alpha)}{B_j} \\
	&= -\sum_{i,\alpha} \bprodesc{\overline{A}_{\overline{\sigma}(e_i,\zeta_\alpha)}e_i}{\zeta_\alpha} + \sum_{i,\alpha} \bprodesc{\nablap^N_{e_i} \overline{\sigma}(e_i,\zeta_\alpha)}{\zeta_\alpha} \\
  &= - \sum_{i,\alpha} \lvert\overline{\sigma}(e_i, \zeta_\alpha)\rvert^2 = -(1-\tau^2)dq + (1-\tau^2) \sum_{i=1}^{d} \lvert(Je_i)^\perp\rvert^2,
\end{split}
\] 
where we have used that $\{B_j\}$ is an orthonormal reference in $\HM(n+1)$,  that $\overline{\sigma}$ is parallel (see \autoref{prop:properties-embedding-complex-projective-euclidean}), and \eqref{eq:2ff-complex-projective-euclidean}.
We finally obtain that
\[
\begin{split}
  \sum_{j = 1}^{(n+1)^2}\mathcal{Q}(B_j^{\perp}) &= - \sum_{j = 1}^{(n+1)^2} \int_M\bprodesc{\mathcal{L} B_j^\perp}{B_j^\perp}\,dv\\
                                                 &= -\int_Mq[\tau^2(d+1)-1]\,\mathrm{d}v -  4(1-\tau^2)  \sum_{i = 1}^{d} \int_M\lvert(Je_i)^\perp\rvert^2 \,\mathrm{d}v\\
  &=-  \frac{4d}{d+1}  \sum_{i = 1}^{d} \int_M\lvert(Je_i)^\perp\rvert^2\, \mathrm{d}v,
\end{split}
\] 
where we have used the hypothesis $\tau^2 = \frac{1}{d+1}$ in the last equality. As a consequence, if  $\Phi$ is stable we obtain that $(Je_i)^\perp = 0$ for any $i \in \{1,\ldots, d\}$. This proves Claim 2.

Finally, the results follows from~\autoref{prop:killing-tangente-normal}.(i) 
\end{proof}

Using some ideas developed in the proof of \autoref{thm:classification-stable}, we can estimate the index of the  minimal submanifolds of $\mathbb{S}^{2n+1}_{\tau}$ with either $\xi^{\perp}=0$ or $\xi^{\top}=0$.
\begin{corollary}\label{co:properties-immersion-xi-tangent-or-normal}
	Let $\Phi: M^d \to \mathbb{S}^{2n+1}_\tau$ be a minimal immersion of a compact manifold $M$. Then:
	\begin{enumerate}[(i)]
		\item If $\xi^\perp = 0$,  then $ \tfrac{1}{\tau^2}-(d+1)$ is an eigenvalue of the Jacobi operator $\mathcal{L}$. Its  multiplicity is either $2n+1-d$ (and $\Phi(M)$ is congruent to the totally geodesic Berger sphere $\mathbb{S}^{2m+1}_\tau\subset\mathbb{S}^{2n+1}_\tau$) or $\geq 2(n+1)$.
		
		Moreover, if $\frac{1}{d+1} < \tau^2 \leq 1$ then 
		\begin{enumerate}
			\item $\Ind(M) \geq 2n+1-d$ and the equality is attained if and only if $d = 2m+1$ and $M$ is congruent to the totally geodesic Berger sphere $\mathbb{S}^{2m+1}_\tau\subset\mathbb{S}^{2n+1}_\tau$.
			\item If $M$ is not the totally geodesic Berger sphere then $\Ind(M) \geq 2(n+1)$.
			\item If $M$ is an orientable hypersurface  then $\Ind(M) \geq 2n+3$.
		\end{enumerate}
		
		\item If $\xi^\top = 0$,  then $\Ind(\Phi) \geq 2(n+1)$.
	\end{enumerate}
	
\end{corollary}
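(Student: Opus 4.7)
The plan for both parts is to employ the normal test sections $X_a^\perp$, $a\in\mathbb{C}^{n+1}$, introduced in the proof of \autoref{thm:classification-stable}, specializing the identity \eqref{eq:Jacobi-Xa-normal} to each hypothesis.

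\emph{For part (i)}, setting $\xi^\perp=0$ gives $|\xi^\top|^2=1$; the term proportional to $\xi^\perp$ drops, and $(J\xi)^\perp=0$ because $J\xi$ is normal to $\mathbb{S}^{2n+1}_\tau$ in $\mathbb{CP}^{n+1}(4(1-\tau^2))$ by \autoref{prop:geodesic-spheres-complex-spaces}. Combining \eqref{eq:covariant-derivative-xi} with the Gauss--Weingarten decomposition yields $\sigma(\xi,Y)=\tau(JY)^\perp$ for every tangent $Y$, which forces the two remaining off-diagonal terms of \eqref{eq:Jacobi-Xa-normal} to cancel exactly. The surviving coefficient of $X_a^\perp$ simplifies to $(d+1)-1/\tau^2$, so
\[
\mathcal{L}X_a^\perp=\bigl((d+1)-\tfrac{1}{\tau^2}\bigr)X_a^\perp,
\]
and therefore $\rho=1/\tau^2-(d+1)$ is an eigenvalue with eigensection $X_a^\perp$ for every $a$. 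For the multiplicity I analyse the linear map $\mathcal{E}\colon a\mapsto X_a^\perp$: pointwise $\ker\mathcal{E}\subseteq T_pM\oplus\mathbb{R} p$, forcing $\dim\ker\mathcal{E}\leq d+1$, while the commutator identity $[\xi,X_a]=\tfrac{1}{\tau}X_{ia}$ (direct from $\xi_z=\tfrac{1}{\tau}iz$) combined with $\xi\in TM$ shows $\ker\mathcal{E}$ is closed under multiplication by $i$, hence complex. The extremal case $\dim\ker\mathcal{E}=d+1$ forces $T_pM\oplus\mathbb{R} p$ to be a fixed complex subspace of $\mathbb{C}^{n+1}$, whence $d=2m+1$ and $M=\mathbb{S}^{2m+1}_\tau$; \autoref{prop:index-nullity-totally-geodesic-Berger-spheres} identifies the multiplicity of $\rho$ as exactly $2n+1-d$ in this case. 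Items (a) and (b) then follow from the sign of $\rho$ when $\tau^2>1/(d+1)$, and (c) adds the extra unstable direction provided by the constant section of \autoref{prop:hypersurface-unstable}, which is linearly independent from the $X_a^\perp=g(a,N)N$.

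\emph{The main obstacle} for part (i) is the complementary branch of the dichotomy: when $M$ is not the totally geodesic Berger sphere, one must conclude $\ker\mathcal{E}=0$ and hence multiplicity $\geq 2(n+1)$. The plan is to observe that any nonzero $a\in\ker\mathcal{E}$ makes the gradient flow of $f_a=g(a,\cdot)$ on $\mathbb{S}^{2n+1}$ preserve compact $M$; its attractors $\pm a/|a|$ must belong to $M$, and letting $a$ vary over $\ker\mathcal{E}$ yields $\mathbb{S}(\ker\mathcal{E})\subset M$. A rigidity argument using the minimality of both $M$ and of the totally geodesic $\mathbb{S}(\ker\mathcal{E})$, together with the group of conformal transformations preserving $M$, should then force $M=\mathbb{S}(\ker\mathcal{E})$, closing the dichotomy.

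\emph{For part (ii)}, the totally real structure from \autoref{prop:killing-tangente-normal}(ii) provides the orthogonal decomposition $T^\perp M=JTM\oplus\langle\xi\rangle\oplus\mathcal{D}$ with $J\mathcal{D}=\mathcal{D}$. Writing $X_a^\perp=Z_{JTM}+Z_\xi\xi+Z_\mathcal{D}$, the relation $JY\in T^\perp M$ for $Y\in TM$ gives $(JX_a)^\top=JZ_{JTM}$ and hence $[J(JX_a)^\top]^\perp=-Z_{JTM}$; substituting in \eqref{eq:Jacobi-Xa-normal} specialized to $\xi^\top=0$ produces
\[
\mathcal{L}X_a^\perp=\bigl(d+2(1-\tau^2)\bigr)Z_{JTM}+d\bigl(Z_\xi\xi+Z_\mathcal{D}\bigr).
\]
Since both eigenvalues $d+2(1-\tau^2)$ and $d$ are strictly positive, $\mathcal{Q}$ is negative definite on $\mathrm{Image}(\mathcal{E})$. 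The remaining step is the injectivity of $\mathcal{E}$, which in the totally real setting amounts to showing that $g(a,ip)=0$ and $g(ia,u)=0$ for every $p\in M$ and every $u\in T_pM$ force $a=0$; this holds whenever the linear span of the tangent spaces $T_pM\subset\mathbb{C}^{n+1}$ is the whole of $\mathbb{C}^{n+1}$ (otherwise $M$ is contained in a proper complex subspace and the argument reduces to that smaller ambient), giving $\Ind(\Phi)\geq\dim\mathrm{Image}(\mathcal{E})=2(n+1)$.
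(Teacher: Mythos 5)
Your computation of $\mathcal{L}X_a^\perp$ under each hypothesis agrees with the paper's: when $\xi^\perp=0$ the two cross terms of \eqref{eq:Jacobi-Xa-normal} cancel via $\sigma(\xi,Y)=\tau(JY)^\perp$ and $(J\xi)^\perp=0$, giving $\mathcal{L}X_a^\perp+\bigl(\tfrac{1}{\tau^2}-(d+1)\bigr)X_a^\perp=0$; when $\xi^\top=0$ one gets $\mathcal{L}X_a^\perp=dX_a^\perp+2(1-\tau^2)X_a^{J(TM)}$, so $\mathcal{Q}$ is negative definite on the image of $\mathcal{E}\colon a\mapsto X_a^\perp$. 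The genuine gap is exactly where you flag ``the main obstacle'': the analysis of $\ker\mathcal{E}$. Your argument shows only that $\ker\mathcal{E}$ is a complex subspace of dimension at most $d+1$ and identifies $M$ in the extremal case; it does not exclude the intermediate cases $0<\dim\ker\mathcal{E}<d+1$, and the proposed remedy (gradient--flow attractors of $f_a$ lie in $\Phi(M)$, then an unspecified ``rigidity argument \dots should then force $M=\mathbb{S}(\ker\mathcal{E})$'') is a sketch, not a proof --- note that when $\dim\ker\mathcal{E}<d+1$ the sphere $\mathbb{S}(\ker\mathcal{E})$ has dimension strictly less than $d$, so what is actually needed is a contradiction, and nothing you wrote produces one. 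The paper closes the dichotomy in one clean step: if $X_a$ is tangent to $M$ for some $a\neq 0$, then $h=g(a,\Phi)$ satisfies $\mathrm{Hess}^{g}h=-h\,g$ on $(M,\Phi^*g)$ (the second fundamental form term $g(\sigma_g(\cdot,\cdot),X_a)$ drops because $X_a$ is tangent), so Obata's theorem forces $(M,\Phi^*g)$ to be the unit round sphere (the alternative $h\equiv 0$ is impossible since then $\nabla^{g}h=X_a=a\neq 0$); the Gauss equation then gives $\sigma_g=0$, hence $M$ is totally geodesic and, by \autoref{prop:classification-totally-geodesic}, is $\mathbb{S}^{2m+1}_\tau$ in case (i) and the real $\mathbb{S}^d$ in case (ii). A single application of Obata thus rules out every nonzero kernel except the extremal one.

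The same omission undermines part (ii): your criterion ``$\ker\mathcal{E}=0$ whenever the tangent spaces span $\mathbb{C}^{n+1}$'' is fine, but the escape clause ``otherwise the argument reduces to a smaller ambient'' does not deliver $\Ind\geq 2(n+1)$ --- a reduction to $\mathbb{C}^{n'+1}$ only bounds the index in terms of $n'$, and the directions $a\in\ker\mathcal{E}$ contribute nothing. In the paper the exceptional case is precisely the totally geodesic $\mathbb{S}^d$, for which the sections $X_a^\perp$ span only a $(2n+1-d)$-dimensional space, and the bound $\Ind(\mathbb{S}^d)=2n+1+\tfrac{d(d-1)}{2}\geq 2(n+1)$ must be imported from the separate computation in \autoref{prop:index-nullity-totally-geodesic-spheres}; your proposal has no substitute for this. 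Finally, in (i)(c) the justification cannot be ``linear independence of the constant section from the $X_a^\perp$'' (negativity of $\mathcal{Q}$ on two subspaces does not imply negativity on their span); the correct argument is that the first eigenvalue of a Schr\"odinger operator is simple, hence cannot equal $\tfrac{1}{\tau^2}-(d+1)$, whose multiplicity is at least $2(n+1)\geq 2$, so there is one further negative eigenvalue below it.
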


\begin{proof}
Following the proof of Claim 1 in \autoref{thm:classification-stable}, for each $a \in \mathbb{C}^{n+1}$ we have computed in \eqref{eq:Jacobi-Xa-normal} $\mathcal{L}X_a^{\perp}$, where $X_a$ is the vector field on $\mathbb{S}^{2n+1}_{\tau}$ defined by  $X_a = a - \prodesc{a}{\Phi}\Phi$. 

	(i) Since $\xi^\perp = 0$,  using \eqref{eq:covariant-derivative-xi}, we deduce that $\sigma(\xi, u) = \tau (Ju)^\perp$ for any tangent vector  $u\in TM$. As a consequence, from \eqref{eq:Jacobi-Xa-normal} we get 
	\[
	\mathcal{L} X_a^\perp + \bigl(\tfrac{1}{\tau^2}-(d + 1)\bigr)X_a^\perp = 0.
	\] 
	So the normal sections of $W=\{X_a^{\perp}\colon a\in \mathbb{C}^{n+1}\}$ are eigensections of the eigenvalue $\frac{1}{\tau^2}-(d+1)$.
	
  If $\dim W < 2(n+1)$, then there exists a nonzero vector $a\in\mathbb{C}^{n+1}$ such that $X_a^{\perp}=0$. Hence $X_a$ is tangent to $M$. Now, since $\xi^\perp = 0$ we know that $T^{\perp}M=T^{\perp}_gM$, where $T^\perp_g M$ stands for the normal bundle of the immersion $\Phi: M \to \mathbb{S}^{2n+1}$. Then, it is not difficult to check that Hessian with respect to the metric $g$ of the function $h=g(\Phi, a)$ satisfies $\hbox{Hess}\, h=-hg$, and so using Obata's theorem~\cite{Obata1962}, we have that $(M,g)$ is isometric to a unit sphere or $h=0$. In the first case, from \autoref{prop:killing-tangente-normal}, $\Phi:M\rightarrow \mathbb{S}^{2n+1}$ is also a minimal immersion and so, from the Gauss equation, $\Phi:M\rightarrow\mathbb{S}^{2n+1}$ is totally geodesic and $\dim W=2(n+1)-d$. Now, using \eqref{eq:Levi-Civita-connection-Berger-round} and \eqref{eq:covariant-derivative-xi} we get that $\Phi:M\rightarrow \mathbb{S}^{2n+1}_{\tau}$ is also totally geodesic and from \autoref{prop:killing-tangente-normal}, $M$ is congruent to the totally geodesic Berger sphere $\mathbb{S}^{2m+1}_{\tau}\subset\mathbb{S}^{2n+1}_{\tau}$. In the second case, $h=0$,  its gradient $\nablag h = X_a$ also vanishes which is imposible since $a \neq 0$. 

  On the other hand, the other possibility is that $\dim W\geq 2(n+1)$.

  Now, as consequence of this and \autoref{prop:index-nullity-totally-geodesic-Berger-spheres}, (a) and (b) follow easily. If $M$ is an orientable hypersurface, then the Jacobi operator is a Schrödinger operator acting on $\mathcal{C}^{\infty}(M)$. It is well-known that its first eigenvalue has multiplicity $1$, and hence
  $\frac{1}{\tau^2}-(d + 1)$ can not be  the first eigenvalue of the Jacobi operator.  Hence $\Ind (M)\geq 1+2(n+1)$ which proves (c).

  (ii) Assume that $\xi^\top = 0$. Then, from \autoref{prop:classification-totally-geodesic}.(ii) and~\eqref{eq:Jacobi-Xa-normal} we get 
  \[
    \mathcal{L} X_a^\perp = d X_a^\perp -2(1-\tau^2)[J(JX_a)^\top]^\perp,
  \] 
  for any vector $a\in\mathbb{C}^{n+1}$. But from \autoref{prop:killing-tangente-normal} the normal bundle of $\Phi$ decomposes orthogonally as
  \[
    T^{\perp}M=J(TM)\oplus \langle\xi\rangle \oplus \mathcal{D},
  \]
  where $\mathcal{D}$ is a subbundle invariant by $J$. It is clear that $[J(JX_a)^\top]^\perp=J(JX_a)^\top$ is a normal section on $J(TM)$. Moreover, if $Y$ is any vector field tangent to $M$, then $\langle J(JX_a)^\top, JY\rangle=\langle JX_a,Y\rangle=-\langle X_a,JY\rangle$ and so $-J(JX_a)^\top$ is 
  the component $X_a^{J(TM)}$ of the vector field $X_a$ in the subbundle $J(TM)$. Hence, for any $a\in\mathbb{C}^{n+1}$ we have
  \[
    \mathcal{L}X_a^\perp=dX_a^\perp+2(1-\tau^2)X_a^{J(TM)}.
  \]
  So
  \[
    \mathcal{Q}(X_a^\perp)=-d\int_M \lvert X_a^\perp\rvert^2\, \mathrm{d}v-2(1-\tau^2)\int_M\lvert X_a^{J(TM)}\rvert^2\, \mathrm{d}v\leq -d\int_M\lvert X_a^\perp\rvert^2\, \mathrm{d}v.
  \]
  From this inequality we have that $\Ind (M)\geq \dim \{X_a^{\perp}\colon  a\in\mathbb{C}^{n+1}\}$. Now following a similar reasoning as in (i) we conclude that either $M$ is the totally geodesic sphere $\mathbb{S}^d$ given in \autoref{prop:killing-tangente-normal} or $\Ind(M)\geq 2(n+1)$. But, by \autoref{prop:index-nullity-totally-geodesic-spheres}, $\Ind(\mathbb{S}^d)\geq 2(n+1)$ and so the proof finishes.
\end{proof}

\section{The case of surfaces in $\mathbb{S}^3_{\tau}$}\label{sec:compact-stable-minimal-surfaces}
Let $\Phi:M\rightarrow \mathbb{S}^3_\tau$  be a  minimal immersion of a compact orientable surface $M$. Then $\Phi$ is two-sided and, if $N$ is  a unit normal vector field to $\Phi$, we choose the orientation on $M$ which makes $\{\xi^\top, (JN)^\top\}$  a positively oriented reference on the open set $\{p\in M\colon \nu^2(p)<1\}$, where $\nu = \bprodesc{N}{\xi}$.  If $z=x+iy$ is a conformal parameter on $M$, then the two-differential 
\[
  \Theta(z)=\Big(\langle\sigma(\partial_z,\partial_z),N\rangle+\tfrac{2i(1-\tau^2)}{\tau}\langle\Phi_z,\xi\rangle^2\Big)\mathrm{d}z\otimes \mathrm{d}z
\]
is holomorphic \cite{AR05, FM07}, where $\partial_z=\frac{1}{2}(\partial_x-i\partial_y)$.

If $M$ is a sphere, then $\Theta=0$ and so, from last equation and the above remark about the orientation, it follows that
\[
  \sigma(X,Y)=\tfrac{1-\tau^2}{\tau}\{ \langle Y,\xi\rangle\langle X,JN\rangle+\langle X,\xi\rangle\langle Y,JN\rangle \},
\]
for any tangent vector fields $X,Y$. So, \eqref{eq:Levi-Civita-connection-Berger-round} becomes in
\[
  \nablar_X Y=	\nabla_X Y + \tfrac{1 - \tau^2}{\tau}\bigl[ \bprodesc{Y}{\xi} (JX)^\top + \bprodesc{X}{\xi} (JY)^\top \bigr]\in \mathfrak{X}(M),
\] 
which says that $\sigma^g=0$. So, $\Phi:M\rightarrow\mathbb{S}^3$ is totally geodesic and hence, up to congruences, there exists only one minimal sphere in $\mathbb{S}^3_{\tau}$, which is given by
\[
  S^2=\{(z_1,z_2)\in \mathbb{S}^3_{\tau}\colon  \pIm z_2=0\}.
\]

On the other hand, there is only a Clifford surface in $\mathbb{S}^3_{\tau}$ (see \autoref{ex:Clifford-hypersurfaces}) which is given by
\[
  T^2_{\tau}=\mathbb{S}^1\left(\tfrac{1}{\sqrt 2}\right)\times \mathbb{S}^1\left(\tfrac{1}{\sqrt 2}\right)=\left\{(z_1,z_2)\in\mathbb{C}^2\colon \lvert z_i\lvert=\tfrac{1}{\sqrt{2}}\right\}\subset \mathbb{S}^3_{\tau}.
\]
As $\lvert\sigma\rvert^2=2\tau^2$ (see \autoref{ex:Clifford-hypersurfaces}) and $\nu=0$, from the Gauss equation it follows that $T^2_{\tau}$ is flat. Also, from \cite{TU2010} we have that the finite coverings of the  Clifford surface are the only minimal flat tori in $\mathbb{S}^3_{\tau}$. Hence we have a parameter family $\{T^2_{\tau}\colon 0<\tau\leq 1\}$ of flat tori, whose conformal structures we are going to determine.

The universal covering of $T^2_{\tau}$ is  $(t,s)\in\mathbb{R}^2\mapsto\frac{1}{\sqrt{2}}(e^{it},e^{is})$ and so, as $\xi^{\perp}=0$,  from ~\eqref{eq:defini-metrica Berger} we obtain 
\[
  \langle\partial t,\partial t\rangle=\langle\partial s,\partial s\rangle=\tfrac{1}{4}(1+\tau^2),\quad \langle\partial t,\partial s\rangle=\tfrac{1}{4}(\tau^2-1). 
\] 
Hence $T^2_{\tau}=\mathbb{R}^2/\Lambda_{\tau}$ where $\Lambda_{\tau}$ is the lattice of $\mathbb{R}^2$ generated by $\{\pi(\tau,1),\pi(\tau,-1)\}$. Using the following representation of the conformal structures of the tori (see \cite{BGM71} and \autoref{fig:toros}), 
\begin{figure}[htbp]
  \small
  \centering
  \def\svgwidth{4cm}
  \begingroup%
    \makeatletter%
    \providecommand\color[2][]{%
      \errmessage{(Inkscape) Color is used for the text in Inkscape, but the package 'color.sty' is not loaded}%
      \renewcommand\color[2][]{}%
    }%
    \providecommand\transparent[1]{%
      \errmessage{(Inkscape) Transparency is used (non-zero) for the text in Inkscape, but the package 'transparent.sty' is not loaded}%
      \renewcommand\transparent[1]{}%
    }%
    \providecommand\rotatebox[2]{#2}%
    \newcommand*\fsize{\dimexpr\f@size pt\relax}%
    \newcommand*\lineheight[1]{\fontsize{\fsize}{#1\fsize}\selectfont}%
    \ifx\svgwidth\undefined%
      \setlength{\unitlength}{180.52861972bp}%
      \ifx\svgscale\undefined%
        \relax%
      \else%
        \setlength{\unitlength}{\unitlength * \real{\svgscale}}%
      \fi%
    \else%
      \setlength{\unitlength}{\svgwidth}%
    \fi%
    \global\let\svgwidth\undefined%
    \global\let\svgscale\undefined%
    \makeatother%
    \begin{picture}(1,0.91626891)%
      \lineheight{1}%
      \setlength\tabcolsep{0pt}%
      \put(0,0){\includegraphics[width=\unitlength,page=1]{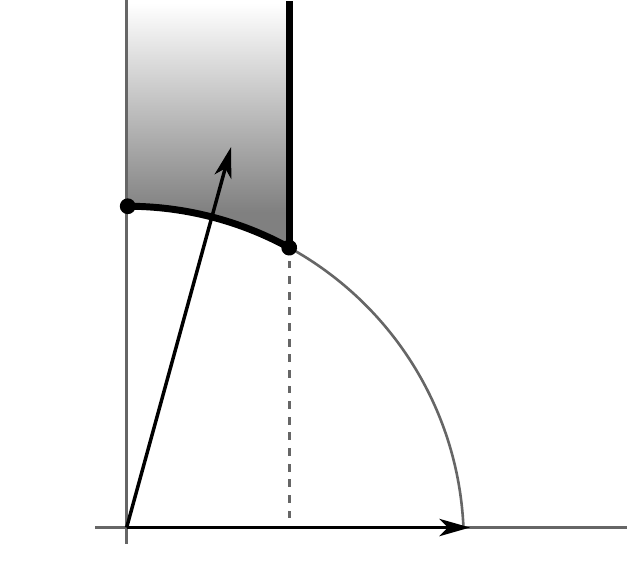}}%
      \put(0.49725927,0.50634717){\color[rgb]{0,0,0}\makebox(0,0)[lt]{\lineheight{1.25}\smash{\begin{tabular}[t]{l}$\tau^2=\frac13$\end{tabular}}}}%
      \put(-0.04074406,0.57489586){\color[rgb]{0,0,0}\makebox(0,0)[lt]{\lineheight{1.25}\smash{\begin{tabular}[t]{l}$\tau^2=1$\end{tabular}}}}%
      \put(0.72990944,0.00781123){\color[rgb]{0,0,0}\makebox(0,0)[lt]{\lineheight{1.25}\smash{\begin{tabular}[t]{l}$1$\end{tabular}}}}%
      \put(0.43909678,-0.0108838){\color[rgb]{0,0,0}\makebox(0,0)[lt]{\lineheight{1.25}\smash{\begin{tabular}[t]{l}$\frac12$\end{tabular}}}}%
      \put(0.37597917,0.68154158){\color[rgb]{0,0,0}\makebox(0,0)[lt]{\lineheight{1.25}\smash{\begin{tabular}[t]{l}$v$\end{tabular}}}}%
    \end{picture}%
  \endgroup%
  \caption{Each vector $v$ in the shaded region represents a conformal structure on a torus (generated by the lattice $\{(1,0), v\}$).}
  \label{fig:toros}
\end{figure}
we have that the torus $T^2_{\tau}$ is conformally equivalent to the torus generated by the lattice $\{(1,0), (\frac{1-\tau^2}{1+\tau^2},\frac{2\tau}{1+\tau^2})\}$, for $\tau^2 \geq \frac{1}{3}$, and it is conformally equivalent to the torus generated by the lattice $\{(1,0), (\frac{1}{2}, \frac{1}{2\tau})\}$ when $\tau^2 \leq \frac{1}{3}$. This means that the conformal structures of $\{T^2_{\tau}\colon  0<\tau^2\leq 1\}$ sweep the thick curve in \autoref{fig:toros}. So the conformal structure of $T^2_{1/\sqrt{3}}$ is the equilateral one.

The next result can be considered as the version for the Berger sphere $\mathbb{S}^3_{\tau}$ of the result in \cite{Urbano1990}, where the Clifford torus in $\mathbb{S}^3$  was characterized by its index. We remark that in the theorem   $\frac{1}{\hbox{dim}(M)+1}\leq\tau^2\leq 1$ like in the previous results.
\begin{theorem}\label{thm:characterization-index-surfaces}
	Let $\Phi:M\rightarrow \mathbb{S}^3_\tau$  be a  minimal immersion   of a compact orientable surface $M$ with $\frac{1}{3} \leq \tau^2 \leq 1$. Then 
  \begin{enumerate}[(i)]
    \item $\Ind(M) = 1$ if and only if  $\Phi$ is an embedding and $M$ is either the minimal sphere $S^2$ or $\tau^2 = \frac{1}{3}$ and $M$ is the Clifford surface ${T}^2_{1/\sqrt 3}$.
	\item $\Ind(M)\geq \frac{g}{4}$, where $g$ is the genus of $M$.
	\end{enumerate}
\end{theorem}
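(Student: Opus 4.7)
For $M = T^2_{1/\sqrt{3}}$ at $\tau^2 = \tfrac{1}{3}$, the equality $\Ind(M) = 1$ is the content of \autoref{prop:Clifford-hypersurfaces-index-nullity} with $n=1$, $m_1 = m_2 = 0$. For the minimal sphere $M = S^2$, I would compute $\Ind(S^2)$ directly: $S^2$ is totally geodesic in $(\mathbb{S}^3, g)$ and inherits a $U(1)$-symmetry from the Hopf action, so along $S^2$ the coefficient functions $|\sigma|^2$, $\nu^2$ and $|\xi^\top|^2$ appearing in~\eqref{eq:Jacobi-operator-hypersurfaces} depend only on a latitude-type coordinate. Decomposing test functions into spherical harmonics and $U(1)$-Fourier modes reduces $\mathcal{L}$ to a family of one-dimensional Sturm-Liouville problems, from which an explicit count gives exactly one negative eigenvalue for every $\tau\in (0,1]$ (consistent with \autoref{prop:hypersurface-unstable}).

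\textbf{The ``only if'' direction of (i) --- setup of the test functions.}  Suppose $\Ind(M) = 1$ and $\tfrac{1}{3}\le\tau^2\le 1$. Composing the embeddings $\mathbb{S}^3_\tau\xrightarrow{F}\mathbb{CP}^2(4(1-\tau^2))\xrightarrow{T}\mathbb{S}^7(r)\subset \HM(3)\cong\mathbb{R}^9$ (from \autoref{prop:geodesic-spheres-complex-spaces} together with the appendix embedding of the complex projective space into Hermitian matrices) we view $\Phi(M)$ inside $\mathbb{S}^7(r)$. Since $\mathcal{L}$ is a Schr\"odinger-type scalar operator on the hypersurface $M$, its first eigenfunction $\psi_1$ is of constant sign, so $\psi_1\,\mathrm{d}v$ is (up to a global sign) a positive measure. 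Apply Hersch's balancing lemma to the push-forward of $\psi_1\,\mathrm{d}v$ under $\Phi$ on $\mathbb{S}^7(r)$ to produce a conformal diffeomorphism $\Psi:\mathbb{S}^7(r)\to \mathbb{S}^7(r)$ whose ambient coordinate components $f_j = (\Psi\circ\Phi)_j$, $j=1,\dots,8$, are all $L^2(M)$-orthogonal to $\psi_1$. These eight test functions therefore lie in the non-negative spectral subspace of $\mathcal{L}$, giving $\mathcal{Q}(f_j)\ge 0$ for each $j$, hence $\sum_j \mathcal{Q}(f_j)\ge 0$.

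\textbf{Key computation and characterization.}  The heart of the argument is the explicit evaluation of $\sum_{j=1}^{8}\mathcal{Q}(f_j)$. Using~\eqref{eq:Jacobi-operator-hypersurfaces} for $\mathcal{L}$, the second-fundamental-form formula~\eqref{eq:2ff-Berger-complex-space} for $F$, the parallel second fundamental form of $T$ from the appendix, and the conformality of $\Psi$, each $\Delta f_j$ can be expressed in terms of the ambient mean curvature vector of $M$ in $\mathbb{R}^9$; the identity $\sum_j f_j^2=$ const on $\mathbb{S}^7(r)$ makes the sum collapse into an integral of a polynomial $P(\tau,\nu^2,|\xi^\top|^2,|\sigma|^2)$. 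The plan is to verify that for $\tfrac{1}{3}\le\tau^2\le 1$ the integrand is pointwise $\le 0$, with equality in exactly two mutually exclusive configurations: $\nu\equiv 0$ together with $|\sigma|^2\equiv 2\tau^2$ and $\tau^2=\tfrac{1}{3}$ (forcing $M=T^2_{1/\sqrt{3}}$ via the flat-torus discussion in \autoref{ex:Clifford-hypersurfaces}), or the vanishing of the holomorphic two-differential $\Theta$ defined just before the theorem (forcing $M$ to be the unique minimal sphere $S^2$ by the paragraph following that definition).

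\textbf{Part (ii) and main difficulty.}  For the genus bound $\Ind(M)\ge g/4$, I would combine the same ambient embedding with a Brill-Noether meromorphic function $h:M\to\mathbb{CP}^1$ of degree $d\le \lfloor (g+3)/2\rfloor$: pulling back appropriate ambient functions via $h$ and the conformal group of $\mathbb{S}^7(r)$ produces enough test functions on which $\mathcal{Q}$ is negative definite to force $\Ind(M)\ge g/4$ by a dimension count. The main obstacle in the whole argument is the sign analysis of the polynomial $P$: one must propagate coefficients carefully through the two composed embeddings, identify $\tau^2=\tfrac{1}{3}$ as the exact threshold where the sign changes, and rule out all equality configurations other than the two announced examples.
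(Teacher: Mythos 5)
Your setup for the ``only if'' direction of (i) matches the paper's: compose $F$ with the Tai embedding to land in $\mathbb{S}^7$, balance the first eigenfunction \`a la Hersch--Li--Yau to get a conformal transformation $F_B$ whose eight coordinate functions are admissible test functions, and conclude $\sum_j\mathcal{Q}(f_j)\geq 0$. But your ``key computation'' as described would fail. The sum does \emph{not} collapse into an integral of a pointwise-signed polynomial in $\tau,\nu^2,\lvert\xi^\top\rvert^2,\lvert\sigma\rvert^2$: the Dirichlet term $\int_M\lvert\nabla(F_B\circ\Psi)\rvert^2$ is the conformal energy $\frac{2}{r^2}\int_M(1-\lvert B\rvert^2)^2/\lvert\Psi+B\rvert^4$, which depends on the unknown balancing point $B$ and is not a local geometric invariant of $M$. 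Two further ingredients are indispensable and absent from your sketch: (a) Gauss--Bonnet, which converts $\int_M 2K$ into $8\pi(g-1)$ and is the only place the genus enters; and (b) an upper bound for the conformal energy, obtained in the paper by integrating $\Delta\log\lvert\Psi+B\rvert^2=0$ over $M$ and completing a square against the mean curvature vector of $\mathbb{S}^3_\tau\subset\mathbb{S}^7(c,r)$, which produces the quantity $\frac{1}{4}\lvert\sum_i\widetilde{\sigma}(e_i,e_i)\rvert^2$ that \emph{is} a polynomial in $\tau$ and $\nu^2$. Only after these two steps does one reach $4\pi(g-1)+\frac{1}{4\tau^2}\int_M[3\tau^2-1+(1-\tau^2)\nu^2][\tau^2(1+\nu^2)+(1-\nu^2)]\leq 0$, whose integrand is pointwise $\geq 0$ for $\tau^2\geq\frac13$, forcing $g=1$, $\tau^2=\frac13$, $\nu=0$ and hence the Clifford torus. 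Note also that the paper disposes of the sphere case at the outset ($g=0$ forces $\Theta=0$, hence $M=S^2$) rather than as an equality case of the integral inequality; the inequality is only run for $g\geq 1$. Your proposed direct Sturm--Liouville computation of $\Ind(S^2)$ is plausible but is replaced in the paper by a citation to \cite{TU2011}.

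For (ii) your route is genuinely different from the paper's and, as stated, does not work. The paper follows Ros and Savo: the space of harmonic $1$-forms has dimension $2g$, and for the associated harmonic vector fields $X$ one computes $\mathcal{Q}(X)=\sum_{i=1}^8\mathcal{Q}(\langle X,B_i\rangle)<0$ using $\Delta X=KX$, the Gauss equation, and the second fundamental forms of the two ambient embeddings; the linear map sending $X$ to its $L^2$-pairings with the $m$ negative eigenfunctions then has trivial kernel, giving $2g\leq 8m$. Your Brill--Noether proposal supplies a degree-$d$ map $h:M\to\mathbb{CP}^1$, which is the standard tool for \emph{upper} bounds on $\lambda_1$ via conformal volume, but it does not by itself produce a $2g$-dimensional (or even $g/4$-dimensional) subspace on which $\mathcal{Q}$ is negative definite; you have not identified any mechanism forcing $\mathcal{Q}<0$ on the pulled-back functions, and the dimension count you invoke is not justified. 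The missing idea is precisely the harmonic-form computation showing $\mathcal{Q}(X)<0$ for every nonzero harmonic $X$ when $\frac13\leq\tau^2\leq 1$ (with the equality case excluded because it would force $M$ to be the Clifford torus, impossible for $g\geq 5$).
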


\begin{proof}
(i) From \autoref{prop:Clifford-hypersurfaces-index-nullity}, the Clifford surface $T^2_{1/\sqrt  3}$  has  index one. Also, in ~\cite[\S4]{TU2011} it was  proved that the index of the minimal sphere $S^2$ is also one for any $\tau\in (0,1].$ 

Suppose now that $\Ind(M)=1$. If $g = 0$, then $M$ is the minimal sphere $S^2$. Hence we can assume that the genus of $M$ is $g\geq 1$.

Thanks to \autoref{prop:properties-embedding-complex-projective-euclidean}, we can consider the  isometric embeddings
\[
  \mathbb{CP}^2(4(1-\tau^2))\subset\mathbb{S}^7(c,r)\subset HM_1(3)\subset H(3),
\] 
where $\mathbb{S}^7(c,r)$ stands for the sphere of center $c=(\frac{1}{3\sqrt{2(1-\tau^2)}}) I$ and radius $r = \frac{1}{\sqrt{3(1-\tau^2)}}$, and $HM_1(3)$ is the affine hyperplane of the Hermitian matrices of order three $H(3)$ defined in \eqref{eq:hermitian-matrices-linear-subespace-HM1}.  We then consider the immersion $\Psi = \frac{1}{r}(\Phi-c): M \to \mathbb{S}^7(0, 1)$.

As $M$ has index $1$,  let $\varphi$ be an eigenfunction associated to the first eigenvalue $\lambda_1<0$ of the Jacobi operator $\mathcal{L}$. Then, by \cite{LY82} there exists $B \in H_1(3)$, $\lvert B\rvert< 1$, and a conformal transformation $F_B: \mathbb{S}^7(0, 1) \to \mathbb{S}^7(0,1)$, $$F_B(p) = B + \tfrac{1-\lvert B\rvert^2}{\lvert p + B\rvert^2}(p+B)\quad\quad
 \forall p\in\mathbb{S}^7(0,1),$$ such that
\[
  \int_M  \varphi\cdot(F_B\circ \Psi)\,\mathrm{d}v  = 0.
\] 
As the second eigenvalue of $\mathcal{L}$ is non-negative, the above expression  implies that  $\mathcal{Q}(F_B \circ \Psi) = \sum_{i = 1}^8 \mathcal{Q}((F_B\circ \Psi)_i)\geq 0$. 

Now, if $K$ is the Gauss curvature of $\Phi$,  the Gauss equation $K=-\frac{\lvert\sigma\rvert^2}{2}+\tau^2+4(1-\tau^2)\nu^2$ of $\Phi$ joint with \eqref{eq:Jacobi-operator-hypersurfaces}, allow to write the Jacobi operator of $\Phi$ as
\begin{equation}\label{eq:Jacobi-operator-surface}
\mathcal{L}=\Delta-2K+4+4(1-\tau^2)\nu^2.
\end{equation}
So, using that $\lvert F_B\circ\Psi\rvert^2=1$, we obtain
\[
  \mathcal{Q}(F_B \circ \Psi) = \int_M \lvert \nabla(F_B \circ \Psi)\rvert^2 + \bigl[ 2K - 4 - 4(1-\tau^2)\nu^2 \bigr]\,\mathrm{d}v \geq 0,
\] 
which can be rewritten, using Gauss-Bonnet theorem, as
\begin{equation}\label{eq:inequality-gradient-FaPsi}
  \int_M \lvert \nabla(F_B \circ \Psi)\rvert^2\,\mathrm{d}v \geq 8\pi(g-1) + 4\int_M [1 + (1-\tau^2)\nu^2]\,\mathrm{d}v.
\end{equation} 
As $F_B$ is a conformal transformation of the sphere and $\mathrm{d}\Psi=\frac{1}{r}\mathrm{d}\Phi$ we easily get
\[
  \lvert \nabla(F_B \circ \Psi)\rvert^2 = \frac{2}{r^2} \frac{(1-\lvert B\rvert^2)^2}{\lvert\Psi + B\rvert^4},
\]
 and so, the inequality~\eqref{eq:inequality-gradient-FaPsi} becomes in
\begin{equation}\label{eq:inequality1}
 \int_M\frac{(1-\lvert B\rvert^2)^2}{r^2\lvert\Psi + B\rvert^4}\, \mathrm{d}v\geq 4\pi(g-1)+2\int_M [1 + (1-\tau^2)\nu^2]\,\mathrm{d}v.
\end{equation} 
To estimate the first term of \eqref{eq:inequality1}, we are going to compute $\Delta\log\lvert\Psi+B\rvert^2$, which is a well defined function because $\lvert B\rvert<1$. To do it, we decompose 
\[
B = B^\top + \bprodesc{B}{N}N + B^\perp + \bprodesc{\Psi}{B}\Psi,
\]
where $B^\top$ is the tangent component of $B$ to $M$ and $B^{\perp}$ is the normal component of $B$ to $\mathbb{S}^3_{\tau}\subset S^7(c,r)$. From this equation it is clear that $\nabla\lvert\Psi+B\rvert^2=\frac{2}{r}B^\top$, and so 
\[
\Delta\lvert\Psi+B\rvert^2=\frac{2}{r}\sum_{i=1}^2\langle\widetilde{\sigma}(e_i,e_i),B\rangle -\frac{4}{r^2}\langle\Psi,B\rangle,
\]
where $\widetilde{\sigma}$ is the second fundamental form of the embedding $\mathbb{S}^3_{\tau}\subset \mathbb{S}^7(c,r)$. Hence we obtain that
\[
\Delta\log\lvert\Psi+B\rvert^2=\frac{2}{r\lvert\Psi+B\rvert^2}\sum_{i=1}^2\langle\widetilde{\sigma}(e_i,e_i),B\rangle-\frac{4\langle\Psi,B\rangle}{r^2\lvert\Psi+B\rvert^2}-\frac{4\lvert B^\top\rvert^2}{r^2\lvert\Psi+B\rvert^4}.
\]
 Now, 
\begin{equation}\label{eq:*}
\begin{aligned}
\int_M\frac{(1-\lvert B\rvert^2)^2}{r^2\lvert\Psi + B\rvert^4}\mathrm{d}v=\int_M\left[\frac{1}{r^2}-\frac{4}{r^2\lvert \Psi+B\rvert^4}(\lvert B\rvert^2+\langle\Psi,B\rangle^2+\langle\Psi,B\rangle(1+\lvert B\rvert^2))\right]\,\mathrm{d}v\\
\leq \int_M \left[\frac{1}{r^2}-\frac{4}{r^2\lvert\Psi+B\rvert^4}(\lvert B^\top\rvert^2+\lvert B^{\perp}\rvert^2+2\langle\Psi,B\rangle^2+\langle\Psi,B\rangle(1+\lvert B\rvert^2))\right]\,\mathrm{d}v\\
=\int_M\left[\frac{1}{r^2}-\frac{4\lvert B^\perp\rvert^2}{r^2\lvert\Psi+B\rvert^4}-\frac{2}{r\lvert\Psi+B\rvert^2}\sum_{i=1}^2\bprodesc{\widetilde{\sigma}(e_i,e_i)}{B}\right]\,
 \mathrm{d}v,
\end{aligned}
\end{equation}
where the first equality is a direct computation, the second inequality comes from $\lvert B\rvert^2\geq \lvert B^\top\rvert^2+\lvert B^\perp\rvert^2+\langle\Psi,B\rangle^2$ and the third equality comes from $\int_M\Delta\log\lvert\Psi+B\rvert^2\, \mathrm{d}v=0$. 

On the other hand, using the inequality
\[
      0 \leq \Bigl\lvert \frac{B^\perp}{r\lvert\Psi + B\rvert^2} + \tfrac{1}{4} \sum_{i=1}^2\widetilde{\sigma}(e_i,e_i) \Bigr\rvert^2 = \frac{\lvert B^\perp\rvert^2}{r^2 \lvert\Psi+B\rvert^4} + \frac{\sum_{i=1}^2\langle\widetilde{\sigma}(e_i,e_i),B\rangle }{2r \lvert\Psi + B\rvert^2}   
     + \frac{1}{16} \Bigr\lvert \sum_{i=1}^2\widetilde{\sigma}(e_i,e_i) \Bigr\rvert^2,
  \] 
 in \eqref{eq:*} we get that
\[
\int_M\frac{(1-\lvert B\rvert^2)^2}{r^2\lvert\Psi + B\rvert^4}\mathrm{d}v\leq \int_M\left[\frac{1}{r^2}+\frac{1}{4} \Bigl\lvert \sum_{i=1}^2\widetilde{\sigma}(e_i,e_i) \Bigl\rvert^2\right]\,\mathrm{d}v.
\]
So, using that $1/r^2=3(1-\tau^2)$, the previous inequality and~\eqref{eq:inequality1} becomes in
\[
  \tfrac{1}{4}\int_M \Bigl\lvert\sum_{i=1}^2\widetilde{\sigma}(e_i,e_i)\Bigr\rvert^2\, \mathrm{d}v\geq 4\pi(g-1)+\int_M \bigl[3\tau^2-1+2(1-\tau^2)\nu^2\bigr] \,\mathrm{d}v.
\] 
But the second fundamental form $\widetilde{\sigma}$ of the embedding $\mathbb{S}^3_{\tau}\subset\mathbb{S}^7(c,r)$ is given $\widetilde{\sigma}=\hat{\sigma}+\overline{\sigma}+\frac{1}{r}\bprodesc{\cdot}{\cdot}\Psi$, where $\overline{\sigma}$ is the second fundamental form of the embedding $\mathbb{CP}^2(4(1-\tau^2))\subset HM(3)$. Now, from \eqref{eq:2ff-Berger-complex-space} and  \eqref{eq:2ff-complex-projective-euclidean}, we easily get that 
\[
  \tfrac{1}{4}\Bigl\lvert\sum_{i=1}^2\widetilde{\sigma}(e_i,e_i)\Bigr\rvert^2= \tfrac{1}{4\tau^2} \bigl[3\tau^2 - 1 + (1-\tau^2)\nu^2\bigr]^2 +(1-\tau^2)\nu^2.
\] 
Finally, the last inequality above reads
\[
  4\pi(g-1) + \tfrac{1}{4\tau^2}\int_M \bigl[3\tau^2 - 1 + (1-\tau^2)\nu^2\bigr] \cdot \bigl[\tau^2(1+\nu^2) + (1-\nu^2)  \bigr]\, \mathrm{d}v  \leq 0
\] 
Since $\frac{1}{3} \leq \tau^2 \leq 1$, $g \geq 1$ and $\nu^2 \leq 1$ we get that $g = 1$, $\tau^2 = \frac{1}{3}$ and $\nu = 0$. Therefore, the Killing field $\xi$ is tangent to $M$ and so an orthonormal reference on $TM$ is given by $\{\xi, JN\}$, where $N$ is a unit normal vector field to $\Phi$. Now, from \eqref{eq:covariant-derivative-xi} it follows that
\[
\sigma(\xi,\xi)=0, \quad \sigma(JN,\xi)=-\tau N,
\]
which implies that $\lvert\sigma\rvert^2=2\tau^2$. The Gauss equation says us that $M$ is flat and so $M$ is congruent to a finite covering of the Clifford torus (\cite{TU2010}). As the Jacobi operator is $\mathcal{L}f=\Delta+4$, $M$ is congruent to the Clifford torus.

(ii) The argument we use to prove (ii) is inspired in the papers \cite{Ros06, Savo2010}.

As $\Ind(M)\geq 1$, we can assume  $g\geq 5$. Then, if $\hat\Delta$ is the Hodge Laplacian acting on $1$-forms on $M$, it is well-known that $\ker \hat\Delta$ is the space of harmonic 1-forms of M, whose dimension is $2g$. Using the metric on $M$, the $1$-forms and the vector fields on $M$ are identified and we say that a vector field $X$ is harmonic if the corresponding $1$-form is harmonic. It is well-known that this property is equivalent to
\[
\diver X=0,\quad \langle\nabla_vX,w\rangle=\langle\nabla_wX,v\rangle,
\]
for any tangent vectors $v,w$ to $M$. Also, if $X$ is a harmonic vector field on $M$ then
\[
\Delta X=KX,
\]
where $K$ is the Gauss curvature of $M$ and $\Delta$ is the rough Laplacian defined by $\Delta=\sum_{i=1}^2\{\nabla_{e_i}\nabla_{e_i}-\nabla_{\nabla_{e_i}e_i}\}$.

Given a harmonic vector field $X$ on $M$ we consider the vectorial function $X:M\rightarrow HM_1(3)$ and we are going to compute $\mathcal{Q}(X)=\sum_{i=1}^8\mathcal{Q}(\langle X,B_i\rangle)$, where $B_i\colon 1\leq i\leq 8\}$ is an orthonormal reference of $HM_1(3)$. If $\widetilde{\Delta}$ demotes the Laplacian of the Euclidean space $HM_1(3)$, then
\[
\begin{split}
  \langle\widetilde{\Delta} X,X\rangle&=\langle\Delta X,X\rangle -\langle A^2X,X\rangle-\sum_{i=1}^2 \bigl[\lvert\hat{\sigma}(X,e_i)\rvert^2+\lvert\overline{\sigma}(X,e_i)\rvert^2\bigr]\\
                                      &=\left(K-\tfrac{1}{2}\lvert\sigma\rvert^2\right)\lvert X\rvert^2-\sum_{i=1}^2 \bigl[\lvert\hat{\sigma}(X,e_i)\rvert^2+\lvert\overline{\sigma}(X,e_i)\rvert^2 \bigr],
\end{split}
\]
where $\{e_1,e_2\}$ is an orthonormal reference on $M$. Now, from~\eqref{eq:Jacobi-operator-surface}, the Gauss equation of $\Phi$,  \eqref{eq:2ff-Berger-complex-space} and~\eqref{eq:2ff-complex-projective-euclidean} we obtain that
\begin{multline*}
  \mathcal{Q}(X)=\int_M\Bigl((\tau^2-4)\lvert X\rvert^2+ \sum_{i=1}^2\{\lvert\hat{\sigma}(X,e_i)\rvert^2+\lvert\overline{\sigma}(X,e_i)\rvert^2 \Bigr)dv\\
=\int_M\left((1-3\tau^2)\bigl(\lvert X\rvert^2+\tfrac{1-\tau^2}{\tau^2}\langle X,\xi\rangle^2\bigr)-(1-\tau^2)\bigl[\lvert(JX)^\top\rvert^2+ \tfrac{1-\tau^2}{\tau^2}\langle X,\xi\rangle^2\nu^2\bigr]\right)\, \mathrm{d}v\leq 0.
\end{multline*}
Moreover, if $\mathcal{Q}(X)=0$, then $\tau^2=1/3$, $(JX)^\top=0$ and $\langle X,\xi\rangle\nu=0$. In this case it is not difficult to get that $\nu=0$, which implies that $M$ is the Clifford surface. This is imposible because we are assuming that the genus $g\geq 5$. So $\mathcal{Q}(X)<0$ for any non-null harmonic vector field $X$ on $M$.

Suppose that $\Ind(M)=m$ and let $\{f_1,\dots,f_m\}$  the eigenfunctions of $\mathcal{L}$ corresponding to the $m$ negative eigenvalues.
If $\mathcal{H}(M)$ denotes the linear space of harmonic vector fields on $M$, we define a map
$F:\mathcal{H}(M)\rightarrow \mathbb{R}^{8m}$  by
\[
F(X)=\big( \int_Mf_1X,\dots,\int_Mf_mX).
\]
If $X\in\ker F$, then $\mathcal{Q}(X)\geq 0$, and so $X=0$. This means that $2g=\dim \hbox{Img}\, F\leq 8m$, which proves (ii).
\end{proof}

\section*{Appendix: The Tai embedding}\label{ap:complex-projective-space}

Let $HM(n+1)=\{A\in gl(n+1,\mathbb{C})\colon \overline{A}=A^t\}$ be the space of Hermitian matrices of order $n+1$, endowed with the Euclidean metric
\[
\langle A,B\rangle=\trace AB.
\]
Let $I\in HM(n+1)$ be the identity matrix. Then 
\begin{equation}\label{eq:hermitian-matrices-linear-subespace-HM1}
HM_1(n+1)=\{A\in HM(n+1)\colon \bprodesc{A}{I} = \tfrac{1}{\sqrt{2(1-\tau^2)}}\}
\end{equation} 
is an affine hyperplane of $HM(n+1)$. 

Let $\pi: \mathbb{S}^{2n+1}(\tfrac{1}{\sqrt{1-\tau^2}}) \to \mathbb{CP}^n(4(1-\tau^2))$ be the Hopf fibration of the sphere of radius $\tfrac{1}{\sqrt{1-\tau^2}}$ over the complex projective space of constant holomorphic curvature $4(1-\tau^2)$. The Tai map~\cite{Tai68} $T: \mathbb{CP}^{n}(4(1-\tau^2)) \to HM(n+1)$ is given by
\[
  T([z]) = \tfrac{\sqrt{1-\tau^2}}{\sqrt2} z^t \overline{z}, \qquad z \in \mathbb{S}^{2n+1}(\tfrac{1}{\sqrt{1-\tau^2}}) \subset \mathbb{C}^{n+1}.
\]

\begin{proposition}\label{prop:properties-embedding-complex-projective-euclidean}
  The Tai map $T$ verifies the following properties:
  \begin{enumerate}[(i)]
    \item It is an isometric embedding. 
    \item Its image is contained in the sphere $\mathbb{S}^{n^2+2n-1}(c,r)$ of $HM_1(n+1)$ with center $c = (\frac{1}{(n+1)\sqrt{2(1-\tau^2)}})I$ and radius $r = \tfrac{\sqrt{n}}{\sqrt{(n+1)2(1-\tau^2)}}$.
    \item The second fundamental form $\overline{\sigma}$ of $T: \mathbb{CP}^n(4(1-\tau^2)) \to HM(n+1)$ is parallel and $\overline{\sigma}(JX, JY) = \overline{\sigma}(X,Y)$. Moreover, for any $x, y, v, w \in T_{[z]} \mathbb{CP}^n(4(1-\tau^2))$
\begin{equation}\label{eq:2ff-complex-projective-euclidean}
  \begin{split}
    \bprodesc{\overline{\sigma}(x, y)}{\overline{\sigma}(v, w)} =\null &(1-\tau^2)\bigl[2\bprodesc{x}{y} \bprodesc{v}{w} + \bprodesc{x}{w} \bprodesc{y}{v} + \bprodesc{x}{v} \bprodesc{y}{w}\bigr]\\
    +&(1-\tau^2) \bigl[ \bprodesc{x}{Jw} \bprodesc{y}{Jv} + \bprodesc{x}{Jv} \bprodesc{y}{Jw}\bigr].
  \end{split}
\end{equation}

  \item $T: \mathbb{CP}^n(4(1-\tau^2)) \to \mathbb{S}^{n^2 + 2n - 1}(c, r)$ is a minimal embedding.
  \end{enumerate}
\end{proposition}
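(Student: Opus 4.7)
The overall strategy is to exploit the $U(n+1)$-equivariance of $T$: under the natural transitive action of $U(n+1)$ on $\mathbb{CP}^{n}$ by holomorphic isometries, and the conjugation action $B\mapsto ABA^{*}$ on $HM(n+1)$ (which is by orthogonal transformations fixing both the center $c$ and the hyperplane $HM_{1}(n+1)$), $T$ is equivariant. Consequently every pointwise invariant of $T$ is determined by its value at the base point $p_{0}=[(1,0,\ldots,0)]$, and the plan is to reduce each item to a computation there. For item (i), well-definedness follows from $z^{t}\bar z=(e^{i\theta}z)^{t}\,\overline{(e^{i\theta}z)}$; injectivity holds because $T([z])$ is, up to scale, the Hermitian rank-one projector onto $\mathbb{C}z$; and the isometry statement is a direct trace computation of $\lvert dT(d\pi(v))\rvert^{2}$ for a horizontal $v$, using
\[
  dT_{[z]}(d\pi_{z}(v))=\tfrac{\sqrt{1-\tau^{2}}}{\sqrt{2}}\bigl(v^{t}\bar z+z^{t}\bar v\bigr).
\]
For item (ii), $\langle T([z]),I\rangle=\tfrac{\sqrt{1-\tau^{2}}}{\sqrt{2}}\lvert z\rvert^{2}=\tfrac{1}{\sqrt{2(1-\tau^{2})}}$ and $\lvert T([z])\rvert^{2}=\tfrac{1-\tau^{2}}{2}\lvert z\rvert^{4}=\tfrac{1}{2(1-\tau^{2})}$, whence $\lvert T([z])-c\rvert^{2}=r^{2}$ by elementary arithmetic.

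For item (iii), equivariance reduces the determination of $\overline{\sigma}$ to a single-point computation at $p_{0}$. In the affine chart $[(1,w_{1},\ldots,w_{n})]$ with the natural orthonormal frame $\{e_{j},Je_{j}\}_{j=1}^{n}$ of $T_{p_{0}}\mathbb{CP}^{n}$ obtained from $\partial_{w_{j}}|_{w=0}$, I would expand $T([(1,w)])$ as a matrix-valued power series in $(w,\bar w)$ and extract $\overline{\sigma}$ from the normal projection of its Hessian at $w=0$; a direct bilinear expansion then produces \eqref{eq:2ff-complex-projective-euclidean} and makes the $J$-invariance $\overline{\sigma}(JX,JY)=\overline{\sigma}(X,Y)$ evident from the formula. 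Parallelism $\nabla\overline{\sigma}=0$ is then immediate: $\nabla\overline{\sigma}$ is $U(n+1)$-equivariant, so it vanishes identically as soon as it vanishes at $p_{0}$, where isotropy invariance under $U(1)\times U(n)$, combined with a short representation-theoretic check on the relevant space of $3$-tensors, rules out any non-zero value (the standard mechanism for parallelism of Tai-type embeddings of symmetric $R$-spaces).

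For item (iv), minimality of $T$ in $\mathbb{S}^{n^{2}+2n-1}(c,r)$ is equivalent to the mean curvature vector of $T$ in $HM(n+1)$ being purely radial, i.e.\ coinciding with $-\tfrac{2n}{r^{2}}(T([z])-c)$. Tracing \eqref{eq:2ff-complex-projective-euclidean} over an orthonormal frame $\{e_{i}\}_{i=1}^{2n}$ and using $\sum_{i,j}\langle e_{i},Je_{j}\rangle^{2}=2n$, one computes
\[
  \Bigl\lvert\sum_{i=1}^{2n}\overline{\sigma}(e_{i},e_{i})\Bigr\rvert^{2}=8n(n+1)(1-\tau^{2})=\tfrac{4n^{2}}{r^{2}},
\]
which matches the squared norm of the radial contribution exactly; the orthogonal splitting of the mean curvature of $T$ in $HM(n+1)$ into its tangent-to-the-sphere component and its radial component then forces the former to vanish.

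The main obstacle is the explicit verification of \eqref{eq:2ff-complex-projective-euclidean} in item (iii): the direct calculation at $p_{0}$ is elementary but algebraically dense. The cleanest route is to decompose $\mathrm{Sym}^{2}(T_{p_{0}}\mathbb{CP}^{n})$ under the isotropy $U(1)\times U(n)$, note that equivariance forces $\overline{\sigma}$ to land in a restricted isotypic summand of the normal space, and pin down the overall normalisation from a short list of test inputs rather than from the full Hessian.
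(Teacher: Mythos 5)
The paper does not actually prove this proposition: its ``proof'' is a one-line citation to \cite[\S 1]{Ros1983}, where the analogous statements are established for $\mathbb{CP}^n(1)$ (equivalently $\mathbb{CP}^n(4)$ up to normalisation), together with the remark that the general case follows after rescaling. Your proposal is therefore, by construction, a different route: a self-contained equivariance argument. Its skeleton is sound and the explicit identities you state all check out --- I verified that $\langle T([z]),I\rangle=\tfrac{1}{\sqrt{2(1-\tau^2)}}$, $\lvert T([z])\rvert^2=\tfrac{1}{2(1-\tau^2)}$ and hence $\lvert T([z])-c\rvert^2=r^2$; that \eqref{eq:2ff-complex-projective-euclidean} reproduces, via the Gauss equation, sectional curvature $4(1-\tau^2)$ on holomorphic planes and $1-\tau^2$ on totally real ones; and that tracing it gives $\lvert\sum_i\overline{\sigma}(e_i,e_i)\rvert^2=8n(n+1)(1-\tau^2)=4n^2/r^2$, which by the orthogonal splitting of the mean curvature along $T-c$ does force minimality in the sphere. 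Two spots are thinner than the rest. First, the derivation of \eqref{eq:2ff-complex-projective-euclidean} itself --- which carries essentially all the computational content of (iii) --- is deferred to ``a direct bilinear expansion'' or ``a short list of test inputs''; this is doable from $dT(d\pi(v))=\tfrac{\sqrt{1-\tau^2}}{\sqrt2}(v^t\bar z+z^t\bar v)$ and $\overline{\sigma}(x,y)=\tfrac{\sqrt{1-\tau^2}}{\sqrt2}(x^t\bar y+y^t\bar x)^{\perp}$ for horizontal lifts, but it is precisely the step that must be written out. Second, the parallelism is asserted via an unspecified representation-theoretic check on $3$-tensors; the cleanest realisation of your equivariance mechanism is to use the geodesic symmetry at $p_0$, implemented by conjugation with $\mathrm{diag}(1,-1,\ldots,-1)\in U(n+1)$: it acts as $-\mathrm{id}$ on $T_{p_0}\mathbb{CP}^n$ and as $+\mathrm{id}$ on the normal space (it fixes $I$, $T(p_0)$ and each $\overline{\sigma}(x,y)$), so the odd tensor $\nabla\overline{\sigma}$ must vanish at $p_0$, hence everywhere by homogeneity. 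With those two points filled in, your argument is a complete and correct substitute for the citation, and has the advantage of making the $\tau$-dependence of all constants explicit; the paper's approach buys brevity at the cost of leaving the rescaling to the reader.
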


\begin{proof}
  This is a well-known result and it was proven in \cite[\S 1]{Ros1983} for $\mathbb{CP}^n(1)$. The result follows after minor modifications of his proof. 
\end{proof}

\end{document}